\newcommand{\CRo}{\mathcal{CR}1}
\newcommand{\CRt}{\mathcal{CR}2}
\newcommand{\V}{\bm{V}}
\newcommand{\F}{\mathcal{F}}
\newcommand{\ip}[2]{\left\langle #1, #2 \right\rangle}
\newcommand{\R}{\mathbb{R}}
\newcommand{\wtilde}[1]{\widetilde{#1}}
\newcommand{\e}{\varepsilon}
\newcommand{\E}{\mathbb{E}}
\newcommand{\Var}{\textrm{Var}}
\DeclareMathOperator{\conv}{conv}
\newcommand{\black}[1]{{\color{black} #1}}
\newcommand{\M}{\bm{M}}
\newcommand{\iin}{\textrm{in}}
\newcommand{\oout}{\textrm{out}}
\newcommand{\nthickrule}{\specialrule{.1em}{.05em}{.05em}}
\DeclareMathOperator*{\argmax}{arg\,max}
\DeclareMathOperator*{\argmin}{arg\,min}
\newcommand{\rspca}{SPCAgs\xspace}
\newenvironment{manualtheorem}[1]{%
  \manualtheoreminner
}{\endmanualtheoreminner}
\begin{document}

\title{Solving sparse principal component analysis with global support \thanks{A preliminary version of this paper was published in~\cite{wangupper}.}}



\author{Santanu S. Dey         \and
        Marco Molinaro \and 
Guanyi Wang
}


\institute{Santanu S. Dey \at
              ISyE, Georgia Institute of Technology \\
              \email{santanu.dey@isye.gatech.edu}           
           \and
           Marco Molinaro \at
              Computer Science Department, Pontifical Catholic University of Rio de Janeiro\\
\email{mmolinaro@inf.puc-rio.br}
\and
Guanyi Wang \\
ISyE, Georgia Institute of Technology \\
\email{gwang93@gatech.edu}
}

\date{Received: date / Accepted: date}

\maketitle

\begin{abstract}
Sparse principal component analysis with global support (\rspca), is the problem of finding the top-$r$ leading principal components such that all these principal components are linear combinations of a common subset of at most $k$ variables. \rspca is a popular dimension reduction tool in statistics that enhances interpretability compared to regular principal component analysis (PCA). Methods for solving \rspca  in the literature are either greedy heuristics (in the special case of $r = 1$) with guarantees under restrictive statistical models or algorithms with stationary point convergence for some regularized reformulation of \rspca. Crucially, none of the existing computational methods can efficiently guarantee the quality of the solutions obtained by comparing them against dual bounds.

	In this work, we first propose a convex relaxation based on operator norms that provably approximates the feasible region of \rspca within a $c_1 + c_2 \sqrt{\log r} = O(\sqrt{\log r})$ factor for some constants $c_1, c_2$. To prove this result, we use a novel random sparsification procedure that uses the \emph{Pietsch-Grothendieck factorization theorem} and may be of independent interest. We also propose a simpler relaxation that is second-order cone representable and gives a $(2\sqrt{r})$-approximation for the feasible region.
	
	Using these relaxations, we then propose a convex integer program that provides a dual bound for the optimal value of \rspca. Moreover, it also has worst-case guarantees: it is within a multiplicative/additive factor of the original optimal value, and the multiplicative factor is $O(\log r)$ or $O(r)$ depending on the relaxation used.
	
	Finally, we conduct computational experiments that show that our convex integer program provides, within a reasonable time, good upper bounds that are typically significantly better than the natural baselines.
		
\keywords{Row sparse PCA\and Matrix sparsification \and Convex hull}
\end{abstract}

\section{Introduction}
\label{introduction}

Principal component analysis (PCA) is a popular tool for dimension reduction and data visualization. Given a \textit{sample matrix} $\bm{X} = \left( \bm{x}_1, \ldots, \bm{x}_M \right) \in \mathbb{R}^{d \times M}$ where each column denotes a $d$-dimensional zero-mean sample, the goal is to find the top-$r$ leading eigenvectors $\bm{V} := (\bm{v}_1, \ldots, \bm{v}_r) \in \mathbb{R}^{d \times r}$ (\textit{principal components}), namely the matrix satisfying 
\begin{align*}
	\argmax_{\bm{V}^{\top} \bm{V} = \bm{I}^r}\text{Tr} \left( \bm{V}^{\top} \bm{A} \bm{V} \right), \tag{PCA} \label{eq:PCA}
\end{align*}
where $\text{Tr}(\cdot)$ is the trace, $\bm{A} := \frac{1}{M} \bm{X} \bm{X}^{\top}$ is the \textit{sample covariance matrix}, and $\bm{I}^r$ denotes the $r \times r$ identity matrix. 

Principal components usually tend to be dense{\color{blue};} that is, the principal components usually involve almost all the $d$ variables/attributes. This leads to a lack of interpretability of the results from PCA, especially in the high-dimensional setting, e.g., clinical analysis, biological gene analysis, and computer vision~\cite{burgel2010clinical,yeung2001principal,jolliffe2016principal}.  Moreover, anecdotally, principal component analysis is also known to generate large generalization errors and therefore makes inaccurate predictions.  To enhance the interpretability and reduce the generalization error, it is natural to consider alternatives to PCA where a sparsity constraint is incorporated. There are different choices of sparsity constraints depending on the context and application. 

In this paper, we consider the \textit{Sparse PCA with global support} (\rspca) problem (see, for example~\cite{vu2012minimax}) defined as follows: Given a sample covariance matrix $\bm{A} \in \mathbb{R}^{d \times d}$ and a \textit{sparsity parameter} $k ~ (\leq d)$, the task is to find out the top-$r$ $k$-sparse principal components $\bm{V} \in \mathbb{R}^{d \times r}$ ($r \leq k$) given by 
\begin{align*}
	\argmax_{\bm{V}^{\top} \bm{V} = \bm{I}^r, ~ \|\bm{V}\|_0 \leq k}\text{Tr} \left( \bm{V}^{\top} \bm{A} \bm{V} \right),  \tag{\rspca} \label{eq:Multi-SPCA}
\end{align*}
where the \textit{row-sparsity constraint} $\|\bm{V}\|_0 \leq k$ denotes that there are at most $k$ non-zero rows in the matrix $\bm{V}$, i.e., the principal components share \textit{global support}. 

\subsection{Literature review}
There is extensive literature on (approximately) solving variations of the sparse PCA problem, and existing approaches can be broadly classified into the following five categories.

In the first category, instead of dealing with the non-convex sparsity constraint directly, the papers \cite{jolliffe2003modified,zou2006sparse,attouch2010proximal,ma2013alternating,vu2013fantope,bolte2014proximal,erichson2018sparse,chen2019alternating} incorporate additional regularizers to the objective function to enhance the sparsity of the solution. Similar to LASSO for the sparse linear regression problem, these new formulations can be optimized via alternating-minimization type algorithms. We note here that the optimization problem presented in \cite{jolliffe2003modified} is NP-hard to solve, and there is no convergence guarantee for the alternating-minimization method given in \cite{zou2006sparse}. The papers \cite{attouch2010proximal,ma2013alternating,vu2013fantope,bolte2014proximal,erichson2018sparse,chen2019alternating} propose their own formulations for the sparse PCA problem, and show that the alternating-minimization algorithm converges to stationary (critical) points. However, the solutions obtained using the above methods cannot guarantee the row-sparsity constraint $\|\bm{V}\|_0 \leq k$. Moreover, none of these methods are able to provide worst-case guarantees for the quality of the solutions obtained.

The second category of methods works with convex relaxations of the sparsity constraint. A majority of this work is for solving  \ref{eq:Multi-SPCA} for the case where $r =1$. The papers \cite{d2005direct,d2008optimal,zhang2012sparse,d2014approximation,kim2019convexification,yongchun2020exact} directly incorporate the sparsity constraint (for $r = 1$ case) and then relax the resulting optimization problem into some convex optimization problem -- usually a semi-definite programming (SDP) relaxation. However, SDPs are often difficult to scale to large instances in practice. Still for the $r=1$ case, the paper~\cite{dey2018convex} proposes a framework to find dual (upper) bounds for \rspca using convex quadratic integer programs. 

A third category of papers presents fixed parameter tractable exact algorithms, where the fixed parameters are usually the rank of the data matrix $\bm{A}$ and $r$. The paper \cite{papailiopoulos2013sparse} proposes an exact algorithm to find the global optimal solution of \ref{eq:Multi-SPCA} with $r = 1$ with running-time of $O(d^{\text{rank}(\bm{A}) + 1} \log d)$. Later the paper \cite{asteris2015sparse} gave a combinatorial method for another variant of sparse PCA with \textit{disjoint} supports, i.e., if $\bm{V}_{j_1}, \bm{V}_{j_2}$ are two columns of $\bm{V}$, then $\text{supp}(\bm{V}_{j_1}) \cap \text{supp}(\bm{V}_{j_2}) = \emptyset$ for $j_1 \neq j_2 \in [r]$. They show that their algorithm outputs a $(1 - \epsilon)$-approximation in time  polynomial in the data dimension $d$ and the reciprocal of $\epsilon$, but exponential both in the rank of the sample covariance matrix $\bm{A}$ and in $r$. Recently \cite{alberto2019sparse} provides a general method for solving \ref{eq:Multi-SPCA} exactly with computational complexity polynomial in $d$, but exponential in $r$ and $\text{rank}(\bm{A})$. The paper \cite{alberto2019sparse} mentions that the results obtained are of theoretical nature, and these methods may not be practically implementable.

A fourth category of results is that of specialized iterative heuristic methods for finding good feasible solutions of  \ref{eq:Multi-SPCA}~\cite{sigg2008expectation,johnstone2009sparse,mackey2009deflation,journee2010generalized,probel2011technical,boutsidis2011sparse,asteris2011sparse,yuan2013truncated,papailiopoulos2013sparse}. In particular, many papers focus on thresholding methods for sparse PCA, with one of the earliest papers being~\cite{johnstone2009sparse}. The paper~\cite{probel2011technical} presents guarantees for the joint thresholding method and argues that this method explains as much variance as from more sophisticated algorithms. To the best of our understanding, there is no natural way to generalize most of these methods for solving \ref{eq:Multi-SPCA} when $r >1$ (except the results in~\cite{probel2011technical}, which hold for $r >1$).

The final category of papers presents algorithms that perform well under the assumption of a statistical model.  Under the assumption of an underlying statistical model, the paper \cite{gu2014sparse} presents a family of estimators for \ref{eq:Multi-SPCA} $r=1$ with the so-called `oracle property' via solving a semidefinite relaxation of sparse PCA. The paper \cite{deshpande2016sparse} analyzes a covariance thresholding algorithm (first proposed by \cite{krauthgamer2015semidefinite}) for the $r = 1$ case. They show that under a specific statistical model, this algorithm correctly recovers the support of the underlying true solution with high probability when the sparsity parameter $k$ is at most of order $\sqrt{M}$, with $M$ being the number of samples. This sample complexity, combined with the lower bounds from \cite{berthet2013computational,ma2015sum}, suggest that no polynomial-time algorithm can do significantly better under their statistical assumptions.  There is also a series of papers \cite{vu2012minimax,cai2013sparse,wang2014tighten,cai2015optimal,lei2015sparsistency} that provide the minimax rate of estimation for sparse PCA. However, all these papers require underlying statistical models and thus do not have worst-case guarantees in the model-free case, which is the focus of this paper. 


\subsection{Our contributions}

In this paper, we propose explicit convex relaxations for the sparse PCA problem (\rspca) with provable quality of their approximations. We use them to obtain a convex integer program that provides a dual bound for the optimal value of \rspca, also with provable guarantees. Finally, we computationally evaluate the the viability and quality of the bounds obtained for fairly large instances. We present details below.

\paragraph{Explicit convex relaxations of the feasible region.} Let $$\mathcal{F} := \{\bm{V}\,|\, \bm{V}^{\top} \bm{V} = \bm{I}^r, ~ \|\bm{V}\|_0 \leq k\}$$ denote the feasible region of \eqref{eq:Multi-SPCA} and let $\text{opt}^{\mathcal{F}}(\bm{A})$ denote the optimal value of \eqref{eq:Multi-SPCA} for the sample covariance matrix $\bm{A}$.  

 Note that the objective function of \ref{eq:Multi-SPCA} is that of maximizing a convex function, and so at least one of the extreme points of the feasible region $\mathcal{F}$ is an optimal solution. Hence, it is important to approximate the convex hull of the feasible region well. Our first explicit convex relaxation for $\mathcal{F}$ has constraints based on the operator norms of the matrix $\V$ and is given by 
	\begin{align}
	\CRo := \left\{\bm{V} \in \mathbb{R}^{d \times r} \, \left|\, \|\bm{V}\|_{\textup{op}} \le 1,~\|\bm{V}\|_{2 \rightarrow 1} \le \sqrt{k},~ \sum_{i =1}^d \|\bm{V}_{i,:}\|_2 \le \sqrt{rk} \right.\right\},  \label{eq:CR1} \tag{$\CRo$}
	\end{align}
	see Section~\ref{sec:notation} for the required definitions. Importantly, we prove that this relaxation is within a factor of $c_1 + c_2\sqrt{\log r} =  O(\sqrt{\log r})$, for some constants $c_1, c_2 > 0$, of the convex hull of $\mathcal{F}$.
	
	\begin{theorem}\label{thm:cr1}
	For every positive integers $d,r,k$ such that $1 \leq r \leq k \leq d$ the convex relaxation $\CRo$ satisfies 
\begin{align*}
\mathcal{F} ~\subseteq~ \mathcal{CR}1 ~\subseteq~ \rho_{\CRo} \cdot \textup{conv}\left(\mathcal{F}\right)
\end{align*}
for $\rho_{\CRo} = 2 + \max\{6 \sqrt{2 \pi},\, 12 \sqrt{\log 50r}\} = O(\sqrt{\log r})$. 
	\end{theorem}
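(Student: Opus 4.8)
The plan is to prove the two inclusions separately; the first is routine Cauchy--Schwarz and the second is the crux.

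\textbf{The inclusion $\F \subseteq \CRo$.} I would verify the three defining inequalities for an arbitrary $\V \in \F$. Since $\V^\top \V = \bm{I}^r$, all $r$ singular values of $\V$ equal $1$, so $\|\V\|_{\textup{op}} = 1$ and $\|\V\|_F = \sqrt{r}$. Because $\V$ has at most $k$ nonzero rows, for any unit $\bm{x}$ the vector $\V\bm{x}$ is supported on at most $k$ coordinates, so $\|\V\bm{x}\|_1 \le \sqrt{k}\,\|\V\bm{x}\|_2 \le \sqrt{k}\,\|\V\|_{\textup{op}} = \sqrt{k}$, giving $\|\V\|_{2\to 1}\le\sqrt{k}$; and likewise $\sum_i \|\V_i\|_2 \le \sqrt{k}\,\big(\sum_i \|\V_i\|_2^2\big)^{1/2} = \sqrt{k}\,\|\V\|_F = \sqrt{rk}$. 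Each bound is one Cauchy--Schwarz over the at most $k$ active rows.

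\textbf{Reducing the hard inclusion to a spectral--norm ball.} To get $\CRo \subseteq \rho\cdot \conv(\F)$ (write $\rho := \rho_{\CRo}$) it suffices to show $\tfrac{1}{\rho}\V \in \conv(\F)$ for every $\V \in \CRo$. The first simplification I would make is to drop orthonormality in favor of a mere spectral bound. Let $\mathcal{G} := \{\bm{W} \in \R^{d\times r} \mid \|\bm{W}\|_0 \le k,\ \|\bm{W}\|_{\textup{op}} \le 1\}$; I claim $\mathcal{G}\subseteq\conv(\F)$. Indeed, on a fixed support $S$ with $|S|=k$ (allowed since $k\le d$, and after padding we may take $|S|=k\ge r$), the spectral--norm unit ball of $\R^{|S|\times r}$ is exactly the convex hull of the orthonormal-column matrices: from the SVD $\bm{W} = \bm{A}\,\textup{diag}(\sigma)\,\bm{B}^\top$ with $\sigma\in[0,1]^r$, writing $\textup{diag}(\sigma)$ as a convex combination of diagonal sign matrices $\textup{diag}(\bm{\epsilon})$, $\bm{\epsilon}\in\{\pm1\}^r$, expresses $\bm{W}$ as a convex combination of the orthonormal-column matrices $\bm{A}\,\textup{diag}(\bm{\epsilon})\,\bm{B}^\top$. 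This is the one place the hypothesis $r\le k$ is essential. It therefore remains to prove the purely spectral/sparsity statement $\tfrac{1}{\rho}\V \in \conv(\mathcal{G})$, i.e.\ to exhibit $\tfrac{1}{\rho}\V$ as a mixture of $k$-row-sparse matrices of operator norm at most $1$.

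\textbf{Sparsification via Pietsch--Grothendieck weights.} To build this mixture I would sample rows. The constraint $\|\V\|_{2\to 1}\le\sqrt{k}$ is the norm of the operator $\V^\top:\ell_\infty^d\to\ell_2^r$, to which the little Grothendieck theorem and the Pietsch factorization theorem apply: there is a probability vector $\bm{p}$ on $[d]$ with $\|\V_i\|_2 \le \sqrt{\pi k/2}\,\sqrt{p_i}$ for all $i$ (test the factorization on the standard basis vectors). Now draw $k$ indices i.i.d.\ from $\bm{p}$ and form the $d\times r$ matrix $\bm{W}$ whose sampled rows are reweighted by $\tfrac{1}{k\,p_{i_t}}$, so that $\E[\bm{W}]=\V$, $\bm{W}$ has at most $k$ nonzero rows, and $\bm{W}^\top\bm{W}$ is the standard importance-sampling estimator of $\V^\top\V$. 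The Pietsch bound caps the sampled summands uniformly via $\|\V_i\|_2/\sqrt{p_i}\le\sqrt{\pi k/2}$, while $\|\V^\top\V\|_{\textup{op}}=\|\V\|_{\textup{op}}^2\le 1$ caps their mean; feeding these into a matrix concentration inequality (matrix Bernstein / Rudelson's inequality for the $r\times r$ matrix $\bm{W}^\top\bm{W}$) gives $\E\|\bm{W}^\top\bm{W}-\V^\top\V\|_{\textup{op}} = O(\log r)$, the dominant contribution being the sub-exponential term of order $\tfrac{\log r}{k}\cdot\tfrac{\pi k}{2}$. Hence $\E\|\bm{W}\|_{\textup{op}}^2 = \E\|\bm{W}^\top\bm{W}\|_{\textup{op}} = O(\log r)$, and this is exactly where the $\sqrt{\log r}$ in $\rho$ is born: by Jensen, $\E\|\bm{W}\|_{\textup{op}} = O(\sqrt{\log r})$. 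All three defining constraints are used here: the $2\to 1$ bound produces the sampling weights and caps per-row spikiness, the operator-norm bound caps the mean of $\bm{W}^\top\bm{W}$, and the row-sum bound $\sum_i\|\V_i\|_2\le\sqrt{rk}$ keeps the sampled support within budget.

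\textbf{From concentration to exact membership (the main obstacle).} The delicate remaining step is to convert the in-expectation bound on $\|\bm{W}\|_{\textup{op}}$ into the exact statement $\tfrac{1}{\rho}\V\in\conv(\F)$, since a convex hull demands that \emph{every} realization, not merely a typical one, be feasible. I would argue by separation: if $\tfrac{1}{\rho}\V\notin\conv(\F)$, choose $\M$ with $\ip{\M}{\V} > \rho\,h_{\F}(\M)$, where $h_{\F}(\M)=\max_{\bm{U}\in\F}\ip{\M}{\bm{U}}$ is the support function. On the event $\{\|\bm{W}\|_{\textup{op}}\le\rho\}$ we have $\tfrac{1}{\rho}\bm{W}\in\mathcal{G}\subseteq\conv(\F)$, hence $\ip{\M}{\bm{W}}\le\rho\,h_{\F}(\M)$ there; but $\E\ip{\M}{\bm{W}}=\ip{\M}{\V}>\rho\,h_{\F}(\M)$, so the entire excess must come from the rare event $\{\|\bm{W}\|_{\textup{op}}>\rho\}$. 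Bounding that contribution by Cauchy--Schwarz against a tail/second-moment estimate for $\|\bm{W}\|_{\textup{op}}$ produces a contradiction once $\rho$ exceeds $\E\|\bm{W}\|_{\textup{op}}$ by a fixed multiplicative slack; this truncation overhead is what yields the additive $2$ and the explicit constants $6\sqrt{2\pi}$ and $18\sqrt{\log 50r}$ in $\rho_{\CRo}$. I expect the bookkeeping in this last step, together with pinning down the precise constant in the matrix concentration inequality, to be the main technical effort, the two preceding reductions being comparatively clean.
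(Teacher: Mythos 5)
Your first inclusion is fine, and your reduction to the set $\mathcal{G}$ of $k$-row-sparse matrices with $\|\cdot\|_{\textup{op}}\le 1$ via the SVD/sign-matrix argument is a clean way to make explicit a step the paper leaves implicit (it is also exactly where $r\le k$ enters). Your sampling scheme is essentially the paper's as well: the paper likewise derives row weights from the Pietsch--Grothendieck factorization (mixed with weights proportional to $\|\V_i\|_2/\sum_{i'}\|\V_{i'}\|_2$), keeps rows by independent Bernoulli trials rather than $k$ i.i.d.\ draws, and controls the operator norm of the sparsified matrix by a matrix Chernoff bound; your $O(\sqrt{\log r})$ estimate for $\E\|\bm{W}\|_{\textup{op}}$ is correct and for the same reasons.

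The genuine gap is in your last step, ``from concentration to exact membership.'' Having fixed an \emph{arbitrary} $\V\in\CRo$ and a separating functional $\M$, you must show that the contribution of the bad event $\{\|\bm{W}\|_{\textup{op}}>\rho\}$ to $\E\ip{\M}{\bm{W}}=\ip{\M}{\V}$ is small compared to $h_{\F}(\M):=\max_{\bm{U}\in\F}\ip{\M}{\bm{U}}$. Cauchy--Schwarz reduces this to bounding $\E\,\ip{\M}{\bm{W}}^2=\ip{\M}{\V}^2+\Var(\ip{\M}{\bm{W}})$, and with your importance weights the only available variance bound is $\Var(\ip{\M}{\bm{W}})\le\frac{1}{k}\sum_i \ip{\M_i}{\V_i}^2/p_i\le\frac{\pi}{2}\|\M\|_F^2$. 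But $\|\M\|_F$ is not comparable to $h_{\F}(\M)$: for $\M$ consisting of $d$ identical unit rows one has $\|\M\|_F=\sqrt{d}$ while $h_{\F}(\M)\le\sqrt{k}$, so the error term can exceed $h_{\F}(\M)$ by an unbounded factor and no contradiction is obtained. The missing idea is the one the paper relies on: by support-function duality it suffices to treat the $\V^*$ that \emph{maximizes} $\ip{\M}{\cdot}$ over $\CRo$, and for such $\V^*$ each normalized row $\V^*_i/\|\V^*_i\|_2$ is itself a point of $\CRo$, whence $\ip{\M_i}{\V^*_i}/\|\V^*_i\|_2\le\ip{\M}{\V^*}$ for every row $i$; combined with the $\|\V^*_i\|_2$-proportional component of the sampling weights this yields $\Var(\ip{\M}{\wtilde{\V}})\le 6\,\ip{\M}{\V^*}^2$, after which a one-sided Chebyshev bound and a union bound with the sparsity and operator-norm events (each arranged to fail with probability below $1/50$ and $1-1/25$ respectively) complete the argument. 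Without restricting to the maximizer --- or supplying some substitute for this row-wise optimality inequality --- your truncation argument cannot be closed, and the constants $2$, $6\sqrt{2\pi}$, $18\sqrt{\log 50r}$ you hope to extract from it are not reachable.
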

	The proof of this result is presented in Section~\ref{sec:convex-relax1}.

Given the simplicity of this relaxation and its provable approximation guarantee, $\CRo$ seems to be an important set not only for sparse PCA but also for other problems with row-sparsity constraints. To prove this result, we use a novel randomized matrix sparsification procedure that, given a matrix $\V^*$ in $\CRo$ produces a row-sparse matrix $\V$ with a controlled spectral norm (hence in $\F$) that is close to the starting point $\V^*$. The main difficulty is effectively leveraging the information provided by the simple formulation $\CRo$, mainly the control of the $\|\cdot\|_{2 \rightarrow 1}$ norm. For that, we employ in our sparsification a row sampling procedure where the weights of the rows are given by the \emph{Pietsch-Grothendieck factorization theorem}~\cite{pietsch1978operator}. We believe this idea may also find uses in other problems with row-sparsity constraints.

It is known that it is NP-hard to compute the $\ell_{2 \rightarrow 1}$-norm present in the constraints of $\mathcal{CR}1$~\cite{steinberg2005computation}. However, it is possible to approximate the $\ell_{2 \rightarrow 1}$-norm constraint within a constant-factor using a semi-definite relaxation proposed in \cite{tropp2009column}. The resulting SDP-representable convex relaxation is as follows:
\begin{align*}
    \mathcal{CR}1' := \left\{ \bm{V} \in \mathbb{R}^{d \times r} \left| 
    \begin{array}{rcl}
	\textup{there exists } \bm{h} \in \mathbb{R}^d_{+}, &\textup{s.t.}&\\
        \sum_{i = 1}^d \|\bm{V}_{i, :}\|_2 &\leq& \sqrt{rk} \\
        \sum_{i =1}^d \bm{h}_i &\leq& \frac{\pi}{2} k \\
       \left[\begin{array}{rl} \bm{I}^d& \bm{V} \\ \bm{V}^{\top}& \bm{I}^r \end{array}\right] &\succeq& \bm{0}^{d +r, d + r} \\
       \left[\begin{array}{rl} \textup{diag}(\bm{h})& \bm{V} \\ \bm{V}^{\top}& \bm{I}^r \end{array}\right] &\succeq& \bm{0}^{d +r, d + r} \\
    \end{array}
    \right.\right\},
\end{align*}
where $\textup{diag}(\bm{h})$ is the diagonal matrix whose diagonal elements are given by the vector $h$, $\bm{I}^d$ is the $d\times d$ identity matrix,  and $\bm{0}^{d +r, d + r}$ is the all zeros matrix of size $(d + r) \times (d +r)$.
The convex relaxation $\mathcal{CR}1'$ is also guaranteed to be within a multiplicative ratio of $O(\sqrt{\log r})$.
\begin{theorem} \label{thm:cr1'}
    For every positive integers $d, r, k$ such that $1 \leq r \leq k \leq d$, the SDP-representable relaxation $\mathcal{CR}1'$ satisfies 
    \begin{align*}
        \text{conv}(\mathcal{F}) \subseteq \mathcal{CR}1' \subseteq \rho_{\mathcal{CR}1'} \cdot \text{conv}(\mathcal{F}),
    \end{align*}
		 for {$\rho_{\mathcal{CR}1'} = \sqrt{\frac{\pi}{2}} \cdot \rho_{\CRo} = O(\sqrt{\log r})$}.    
\end{theorem}
See Section~\ref{sec:SDR-L21-norm} for a proof of Theorem~\ref{thm:cr1'}. 

While this relaxation is easier to handle computationally compared to $\mathcal{CR}1$, the semi-definite constraints are still typically hard to solve in practice for large instances. Therefore, we next present a simpler convex relaxation of $\mathcal{F}$ that, while having a worse worst-case guarantee, is second-order cone representable and hence can be more efficiently optimized over using interior-point methods.

	
	\begin{theorem} \label{thm:cr2}
		For every $d,r,k$ positive integers such that $1 \leq r \leq k \leq d$, there is a relaxation $\mathcal{CR}2$ (see \eqref{eq:CR2}) that is second-order cone representable and has the guarantee
		\begin{align*}
			\conv(\mathcal{F}) \subseteq \mathcal{CR}2 \subseteq \rho_{\mathcal{CR}2} \cdot \conv(\mathcal{F}),
		\end{align*}
	where $\rho_{\mathcal{CR}2} \leq 2\sqrt{r}$. 
	\end{theorem}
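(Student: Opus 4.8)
The first inclusion $\conv(\F)\subseteq\CRt$ is immediate and needs no new work: the set $\CRt$ is convex (each of its four defining inequalities is convex), and we have already established $\F\subseteq\CRo\subseteq\CRt$ — the first containment being the first inclusion of Theorem \ref{thm:cr1} and the second being the relaxation observation recorded just before the statement of this theorem. Taking convex hulls gives $\conv(\F)\subseteq\CRt$. (Alternatively, one verifies the four inequalities directly for $\bm V\in\F$ from $\bm V^\top\bm V=\bm I^r$ and $k$-row-sparsity, exactly as in Section~\ref{sec:cr1valid}.)

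For the second inclusion I would argue at the level of support functions (Proposition C.3.3.1 of~\cite{hiriart2012fundamentals}), as in Section~\ref{sec:cr1blowup}, i.e.\ show that for every $\bm C\in\R^{d\times r}$,
\[
\max_{\bm V\in\CRt}\ip{\bm C}{\bm V}\;\le\;(1+\sqrt r)\cdot\max_{\bm V\in\conv(\F)}\ip{\bm C}{\bm V}.
\]
The right-hand side is easy to identify: for a fixed support $S$ with $|S|=k$ the inner maximum over the Stiefel manifold equals the sum of the top $r$ singular values of the $k\times r$ matrix $\bm C_S$, which (as $\bm C_S$ has rank at most $r$) is just its nuclear norm $\|\bm C_S\|_*$; optimizing over $S$ gives the quantity $\textup{opt}_\F(\bm C):=\max_{|S|=k}\|\bm C_S\|_*$. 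The key elementary estimate is that, taking $S$ to be the $k$ rows of $\bm C$ of largest Euclidean norm, $\textup{opt}_\F(\bm C)\ge\|\bm C_S\|_F\ge\sqrt k\,\|\bm C_{(k+1)}\|_2$, where $\|\bm C_{(k+1)}\|_2$ is the $(k+1)$-st largest row norm; equivalently $\|\bm C_{(k+1)}\|_2\le\textup{opt}_\F(\bm C)/\sqrt k$.

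Now fix a maximizer $\bm V^*\in\CRt$ of the left-hand side and (as in the Value paragraph of Section~\ref{sec:cr1blowup}) assume $\ip{\bm C_i}{\bm V^*_i}\ge0$ for each row. I would split the rows into the heavy set $S$ (the $k$ rows of largest $\bm C$-norm) and its complement, writing $\ip{\bm C}{\bm V^*}=\ip{\bm C_S}{\bm V^*_S}+\ip{\bm C_{S^c}}{\bm V^*_{S^c}}$. For the light rows, Cauchy--Schwarz together with the row-sum constraint of $\CRt$ gives
\[
\ip{\bm C_{S^c}}{\bm V^*_{S^c}}\;\le\;\Big(\max_{i\in S^c}\|\bm C_i\|_2\Big)\sum_{i\in S^c}\|\bm V^*_i\|_2\;\le\;\frac{\textup{opt}_\F(\bm C)}{\sqrt k}\cdot\sqrt{rk}\;=\;\sqrt r\cdot\textup{opt}_\F(\bm C),
\]
and for the heavy rows, operator/nuclear duality with the Frobenius bound $\|\bm V^*_S\|_F\le\|\bm V^*\|_F\le\sqrt r$ (from the column-norm constraints) gives $\ip{\bm C_S}{\bm V^*_S}\le\|\bm V^*_S\|_F\,\|\bm C_S\|_F\le\sqrt r\cdot\textup{opt}_\F(\bm C)$.

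The naive combination of these two estimates only yields the constant $2\sqrt r$, which for $r=1$ coincides with the claimed $1+\sqrt r=2$ (and reproduces the argument of~\cite{dey2018convex}, where the $\ell_2$-constraint controls the heavy part and the $\ell_1$-constraint the tail, each with factor $1$), but for $r\ge2$ is strictly weaker than the asserted $1+\sqrt r$. I therefore expect the main obstacle to be sharpening this to $1+\sqrt r$, which requires trading the two estimates against each other instead of bounding them in isolation. The point is that the heavy-row bound is driven by the budget $\sum_{i\in S}\|\bm V^*_i\|_2^2$ while the light-row bound is driven by the residual row-sum $\sum_{i\in S^c}\|\bm V^*_i\|_2$, and the joint constraints $\sum_i\|\bm V^*_i\|_2^2\le r$ and $\sum_i\|\bm V^*_i\|_2\le\sqrt{rk}$ prevent both from being extremal simultaneously. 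Making this interplay quantitative — so that one part is charged the factor $1$ and the other $\sqrt r$ — and confirming that the resulting affine constant is exactly $1+\sqrt r$ is the step I anticipate requiring the most care.
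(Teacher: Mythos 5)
Your treatment of the first inclusion is fine and matches the paper. The problem is the second inclusion: as you yourself note, your heavy/light split only delivers $\CRt \subseteq 2\sqrt{r}\cdot\conv(\F)$, and the step that would sharpen $2\sqrt{r}$ to the claimed $1+\sqrt{r}$ is exactly the step you leave open. Since the theorem asserts the specific constant $\rho_{\CRt}\le 1+\sqrt{r}$, this is a genuine gap, not a cosmetic one: to charge the heavy rows only a factor of $1$ you would need something like $\ip{\bm{C}_S}{\bm{V}^*_S}\le \|\bm{C}_S\|_*\,\|\bm{V}^*_S\|_{\textup{op}}$ with $\|\bm{V}^*_S\|_{\textup{op}}\le 1$, but membership in $\CRt$ does not force $\|\bm{V}\|_{\textup{op}}\le 1$ (e.g.\ $\bm{V}=\bm{v}\otimes\bm{1}^\top$ with $\|\bm{v}\|_2^2=1/2$ satisfies all column constraints for $r=3$ yet has $\|\bm{V}\|_{\textup{op}}=\sqrt{3/2}$), so the ``interplay'' you gesture at cannot be resolved by simply swapping Frobenius for operator norm on the heavy block. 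Your proposal as written proves a weaker theorem.

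The paper does not argue via support functions at all. It takes $\bm{V}\in\CRt$ directly, sorts its rows by non-increasing $\ell_2$-norm, partitions them into $m=\lceil d/k\rceil$ consecutive blocks of $k$ rows, writes $\bm{V}=\bm{V}^1+\cdots+\bm{V}^m$, and observes that each $\bm{V}^p/\|\bm{V}^p\|_{\textup{op}}$ lies in $\F$, so that $\bm{V}\in\bigl(\sum_p\|\bm{V}^p\|_{\textup{op}}\bigr)\cdot\conv(\F)$. The factor $\sqrt{r}$ then comes from a telescoping estimate: each row in block $p\ge 2$ has norm at most the average row norm of block $p-1$, so $\|\bm{V}^p\|_{\textup{op}}\le\|\bm{V}^p\|_F\le\frac{1}{\sqrt{k}}\sum_{i\in I^{p-1}}\|\bm{V}_i\|_2$, and summing over $p\ge 2$ is bounded by $\frac{1}{\sqrt{k}}\sum_i\|\bm{V}_i\|_2\le\sqrt{r}$ via the row-sum constraint of $\CRt$. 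The additive $1$ comes from bounding $\|\bm{V}^1\|_{\textup{op}}\le 1$ for the first block alone. So the decomposition is driven by the large rows of $\bm{V}$, not the large rows of $\bm{C}$, which is the structural idea missing from your argument. (You may take some comfort in the fact that the paper's bound $\|\bm{V}^1\|_{\textup{op}}\le 1$ is itself justified there by appealing to $\|\bm{V}\|_{\textup{op}}\le 1$, which is a $\CRo$ constraint and not one of the four inequalities defining $\CRt$; so the difficulty you ran into in extracting a factor of $1$ rather than $\sqrt{r}$ for one of the two pieces is a real one, and is the point at which the paper's write-up is itself most delicate.)
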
		
	This generalizes the main theoretical result in~\cite{dey2018convex} for the case $r =1$. See Section~\ref{sec:convex-relax2} for a proof of Theorem~\ref{thm:cr2}.

\paragraph{Convex integer programming formulation for obtaining dual bounds.} While the above relaxations allow us to convexity the feasible region, since the objective function of \ref{eq:Multi-SPCA} is also non-convex (i.e., maximizing a convex function), they do not yet give ``full relaxations'' that yield tractable upper (dual) bounds for the problem. Recall that dual bounds are typically crucial for effective computational procedures for non-convex problems, which are usually solved using branch-and-bound type algorithms that use dual bounds to prune the solution space.

	To handle the non-convex objective function, we consider the natural approach of upper bounding the objective function by piecewise linear functions, which can be modeled using binary variables and special ordered sets (SOS-II)~\cite{wolsey1999integer} (see Section~\ref{sec:plwconstruction} for the construction). Used together with a convex relaxation $\CRo$, $\CRo'$, or $\CRt$, this gives a convex integer programming relaxation for \ref{eq:Multi-SPCA}.
	
	Interestingly, we show that this full relaxation also has a provable approximation guarantee, providing a dual bound within a multiplicative/additive factor of the optimal value of the original problem. 
	
\begin{theorem} \label{thm:CIP}
Let $\textup{opt}^{\mathcal{F}}$ be the optimal value of \ref{eq:Multi-SPCA}. Then there is a convex integer program \eqref{eq:CIP} using the relaxation $\CRo$, $\mathcal{CR}1'$, or $\CRt$ whose optimal value $\textup{ub}^{\mathcal{CR}i}$ satisfies the following:
\begin{align*}
	\textup{opt}^{\mathcal{F}}(\bm{A}) \leq \textup{ub}^{\mathcal{CR}i} \leq \rho_{\mathcal{CR}i}^2 \cdot \textup{opt}^{\mathcal{F}}(\bm{A}) + \textup{additive-term}(\bm{A}),
\end{align*}
	where the $\textup{additive term}$ depends on the input matrix $\bm{A}$ and the parameters used in the piecewise linear approximation of the objective function, and $\rho_{\mathcal{CR}i}$ is the approximation ratio guarantee from Theorems \ref{thm:cr1}, \ref{thm:cr1'} and \ref{thm:cr2}. 
\end{theorem}	
See Section~\ref{sec:ub-PLA} for a proof of Theorem~\ref{thm:CIP}.
	

	We remark that despite having an additive term, this bound is invariant to rescaling of the data matrix $\bf{X}$, see Appendix \ref{app:invariance} for details. Moreover, the additive term is bounded from above by 
$$\textup{add}(\bm{A}) \leq \textup{Tr}(\bm{A})\cdot \frac{r}{4N^2},$$ 
where $N$ is the number of pieces used in the piecewise approximation of each quadratic term in the objective function (See Section~\ref{sec:ub-PLA} for details). For a required level of additive-term error (and thus a required \emph{effective multiplicative ratio}), one can select an appropriate value of $N$. As $N \rightarrow \infty$ this becomes a purely multiplicative guarantee with approximation factor $\rho_{\mathcal{CR}i}^2$.

\paragraph{Computational experiments.} In order to evaluate both its practical viability and the quality of the dual bound it provides, we perform computational experiments with the full relaxation provided by our proposed convex integer program (Theorem \ref{thm:CIP}). We use both synthetic and real data with up to 2000 features, which yield input matrices of size $2000 \times 2000$. See Section~\ref{sec:numerical-exp} for details of all our numerical experiments.

Solving our convex integer program for the larger instances requires care, and we employ cutting planes and a submatrix splitting technique to speed up the computations. See Section~\ref{sec:num-sub-matrix} and Appendix~\ref{app:reduce-running-time} for these details.  

Obtaining the good feasible solutions required to evaluate the quality of our dual bounds poses a challenge given the lack of heuristics for \ref{eq:Multi-SPCA} when $r > 1$ and the size of the instances. To mitigate this, we use an optimized version of the natural greedy heuristic that looks for which rows of $\V$ should be non-zero and sets values for them based on the standard PCA problem restricted to these variables. The heuristic greatly reduces the number of eigenvalue computations required, which is a bottleneck of the process. See Section~\ref{sec:primal-bound} for a presentation of this heuristic.

 The numerical results show that our convex integer program provides within a reasonable time good upper bounds that are typically significantly better than an SDP relaxation and another baseline.
	
\paragraph{Note.} A preliminary version of this paper was published in~\cite{wangupper}. The current version has many new results: in particular, $\mathcal{CR}1$, $\mathcal{CR}1'$, and the results on their strengths are completely new, and the numerical experiments have been completely revamped.


\subsection{Notation}\label{sec:notation}
We use regular lower case letters, for example $\alpha$, to denote scalars. For a positive integer $n$, let $[n] := \{1, \ldots, n\}$. For a set $S \subseteq \mathbb{R}^n$ and a $\rho > 0$ denote $\rho \cdot S := \{ \rho x \,|\, x \in S \}$.

We use bold lower case letters, for example $\bm{a}$, to denote vectors. We denote the $i$-th component of a vector $\bm{a}$ as $\bm{a}_{i}$.  Given two vectors, $\bm{u}, \bm{v} \in \mathbb{R}^n$, we represent the inner product of $\bm{u}$ and $\bm{v}$ by $\langle \bm{u}, \bm{v}\rangle$. Sometimes it will be convenient to represent the outer product of vectors  using $\otimes$, i.e., given two vectors $\bm{a}, \bm{b} \in \mathbb{R}^n$, $\bm{a} \otimes \bm{b}$ is the matrix where $[\bm{a} \otimes \bm{b}]_{i,j} = \bm{a}_i \bm{b}_j$. We denote the unit vector in the direction of the $j$th coordinate as $\bm{e}^j$.

We use bold upper case letters, for example $\bm{A}$, to denote matrices. We denote the $(i,j)$-th component of a matrix $\bm{A}$ as $\bm{A}_{ij}$. We use $\text{supp}(\bm{A}) \subseteq [m]$ to denote the indices of the non-zero rows of a matrix $\bm{A} \in \R^{m \times n}$. We use regular upper case letters, for example $I$, to denote the set of indices. Given any matrix $\bm{A} \in \mathbb{R}^{m \times n}$ and $I \subseteq [m], J \subseteq [n]$, we denote the sub-matrix of $\bm{A}$ with rows in $I$ and columns in $J$ as $\bm{A}_{I,J}$. For $I \subseteq [m]$, to simplify notation we use the Matlab-like notation to denote the submatrix of $\bm{A} \in \mathbb{R}^{m \times n}$ corresponding to the rows with indices in $I$ as $\bm{A}_{I, :}$ (instead of $\bm{A}_{I, [n]}$). Similarly for $i \in [m]$, we denote the $i^{\textup{th}}$ row of $\bm{A}$ as $\bm{A}_{i,:}$. Also, for $J \subseteq [n]$ we denote the submatrix of $\bm{A} \in \mathbb{R}^{m \times n}$ corresponding to the columns with indices in $J$ as $\bm{A}_{:, J}$ (instead of $\bm{A}_{[m], J}$),  and for $j \in [n]$, we denote the $j^{\textup{th}}$ column of $\bm{A}$ as $\bm{A}_{:, j}$.

For a symmetric square matrix $\bm{A}$, we denote the largest eigenvalue of $\bm{A}$ as $\lambda_{\textup{max}}(\bm{A})$. Given two symmetric matrices $\bm{A}, \bm{B} \in \mathbb{R}^{n \times n}$, we say that $\bm{A} \preceq \bm{B}$ if $\bm{B} - \bm{A}$ is a positive semi-definite matrix. Given $\bm{U}, \bm{V} \in \mathbb{R}^{m\times n}$, we let $\langle \bm{U}, \bm{V} \rangle = \sum_{i = 1}^m\sum_{j = 1}^n \bm{U}_{ij} \bm{V}_{ij}$ to be the inner product of matrices. We use $\bm{0}^{p, q}$ to denote the matrix of size $p \times q$ with all entries equal to zero, and use $\bm{I}^{p}$ to denote the identity matrix of size $p \times p$. We use $\oplus$ to denote the direct sum of matrices, i.e., given matrices $\bm{A} \in \mathbb{R}^{p \times q}, \bm{B} \in \mathbb{R}^{m \times n}$, $$\bm{A} \oplus \bm{B} := \left[\begin{array}{cc}\bm{A} & \bm{0}^{p, n}\\ \bm{0}^{m,q} & \bm{B} \end{array}\right].$$ The operator norm $\|\bm{A}\|_{p \rightarrow q}$ of a matrix $\bm{A} \in \mathbb{R}^{m \times n}$ is defined as
\begin{align*}
\|\bm{A}\|_{p \rightarrow q}:= \textup{max}_{\bm{x} \in \mathbb{R}^n, \|\bm{x}\|_p = 1}\|\bm{Ax}\|_q.
\end{align*}
We sometimes refer to $\|\bm{A}\|_{2 \rightarrow 2}$ as $\|\bm{A}\|_{\textup{op}}$. Note that $\|\bm{A}\|_{\textup{op}}$ is the largest singular value of $\bm{A}$. The Frobenius norm of a matrix $\bm{A}$ is denoted as $\|\bm{A}\|_F$. We use the notation $\| \bm{A}\|_{1}$ to be the sum of absolute values of the entries of $A$.


{\color{black}
\section{Convex relaxations of $\mathcal{F}$} \label{sec:convex-relax}



\subsection{Operator-norms relaxation $\mathcal{CR}1$} \label{sec:convex-relax1}

	In the vector case, i.e., $r=1$, a natural convex relaxation for $\F$ is to control the sparsity via the $\ell_2$ and $\ell_1$ norms, namely to consider the set $\{\bm{v} \in \R^d \mid \| \bm{v}\|_2 \le 1,~\|\bm{v}\|_1 \le \sqrt{k}\}$ (see~\cite{dey2018convex}). It is easy to see that this is indeed a relaxation in the case $r=1$: if $\bm{v} \in \F$, then by definition $\ip{\bm{v}}{\bm{v}} = 1$ and so $\|\bm{v}\|_2 =1$, and since $\bm{v}$ is a $k$-sparse vector we get, using the standard $\ell_1$ vs $\ell_2$-norm comparison in $k$-dimensional space, $\|\bm{v}\|_1 \le \sqrt{k} \cdot \|\bm{v}\|_2 = \sqrt{k}$. 

	Here we consider the relaxation $\CRo$ that generalizes this idea for any $r$, which we now recall:
	\begin{align*}
	\CRo := \left\{\bm{V} \in \mathbb{R}^{d \times r} \, \left|\, \|\bm{V}\|_{\textup{op}} \le 1,~\|\bm{V}\|_{2 \rightarrow 1} \le \sqrt{k},~ \sum_{i =1}^d \|\bm{V}_{i,:}\|_2 \le \sqrt{rk} \right.\right\}.
\end{align*}
	Thus we now use both the $\ell_{2\rightarrow 1}$ norm and the sum of the lengths of the rows of $\bm{V}$ to take the role of the $\ell_1$-norm proxy for sparsity (by convexity of norms both constraints are convex). While is it not hard to see that this is a relaxation of $\F$, we further show that it has a provable approximation guarantee.


	For the remainder of the section we prove this approximation guarantee of $\CRo$, namely that 
	\begin{align*}
	\mathcal{F} ~\subseteq~ \mathcal{CR}1 ~\subseteq~ \rho_{\CRo} \cdot \textup{conv}\left(\mathcal{F}\right)
  \end{align*}
for $\rho_{\CRo} = 2 + \max\{6 \sqrt{2 \pi},\, 12 \sqrt{\log 50r}\}$, thus proving Theorem \ref{thm:cr1}. We prove each of these inclusions separately. 	 


\subsubsection{Proof of first inclusion in Theorem \ref{thm:cr1}: $\mathcal{F} \subseteq \mathcal{CR}1$}\label{sec:cr1valid}

Consider any matrix $\bm{V}$ in $\F = \{\bm{V} \in \mathbb{R}^{d \times r} \,|\, \bm{V}^{\top} \bm{V} = \bm{I}^r, ~ \|\bm{V}\|_0 \leq k \}$, we show that such $\bm{V}$ satisfies the constraints of $\CRo$. 

\begin{itemize} \itemsep8pt
    \item \textbf{1st constraint of $\CRo$.} Observe that 
    \begin{align*}
        \|\bm{V}\|_{\textup{op}} = \textup{max}_{\bm{x} \in \mathbb{R}^{{r}}, \|\bm{x}\|_2 = 1}\|\bm{Vx}\|_2 = & ~ \textup{max}_{\bm{x} \in \mathbb{R}^{{r}}, \|\bm{x}\|_2 = 1} \sqrt{\langle \bm{V} \bm{x}, \bm{V}\bm{x} \rangle} \\
        = & ~ \textup{max}_{\bm{x} \in \mathbb{R}^{{r}}, \|\bm{x}\|_2 = 1} \sqrt{\langle \bm{x}, \bm{V} ^{\top}\bm{V}\bm{x} \rangle} = 1
    \end{align*}
    Therefore, we obtain that for any $\bm{V} \in \mathcal{F}$, we have $\|\bm{V} \|_{\textup{op}} \leq 1$. 
    
    \item \textbf{2nd constraint of $\CRo$.} By the definition of $\|\cdot\|_{2 \rightarrow 1}$, it is equivalent to verify that $\|\bm{V} \bm{x}\|_1 \leq \sqrt{k}$ for all $\bm{x} \in \mathbb{R}^r$ such that $\| \bm{x}\|_2 \leq 1$. Since $\bm{V}$ is $k$-row-sparse, $\bm{Vx}$ is a $k$-sparse vector and hence by $\ell_1$ vs $\ell_2$-norm comparison in $k$-dim space we get
    \begin{align*}
        \|\bm{Vx}\|_1 \le \sqrt{k} \cdot \|\bm{Vx}\|_2 \leq \sqrt{k}, 
    \end{align*}
    where the last inequality follows from the fact that $\|\bm{Vx}\|_2 \leq \|\bm{V}\|_{\textup{op}}$ for all $\bm{x}$ satisfying $\|\bm{x}\|_2 \leq 1$. 
    
    \item \textbf{3rd constraint of $\CRo$.} Since $\|\bm{V}\|_{\text{op}} \leq 1$, then each column of $\bm{V}$ (i.e., $\bm{V}_{:, j}$ for all $j = 1, \ldots, r$) has a $\ell_2$-norm at most 1. Since $\bm{V}$ has $r$ columns, then 	
    \begin{align*}
	    r \geq \sum_{j = 1}^r \|\bm{V}_{:,j}\|_2^2 = \|\bm{V}\|_F^2 = \sum_{i = 1}^d \|\bm{V}_{i,:}\|_2^2.
    \end{align*}
    The $k$-row-sparse property of $\bm{V} \in \F$ implies that at most $k$ terms $\|\bm{V}_{i,:}\|_2^2$ in the right-hand side of the above inequality are non-zero. Then again applying the $\ell_1$ vs $\ell_2$-norm comparison in $k$-dim space we get
	\begin{align*}
		\sum_{i =1}^d \|\bm{V}_{i,:}\|_2 ~\le~ \sqrt{k} \cdot \sqrt{\sum_{i = 1}^d \|\bm{V}_{i, :}\|_2^2} ~\le~ \sqrt{k} \cdot \sqrt{r} ~ = ~ \sqrt{rk} \, ,
	\end{align*}
	and so the third constraint of $\CRo$ is satisfied. 
\end{itemize}


\subsubsection{Proof of second inclusion in Theorem \ref{thm:cr1}: $\mathcal{CR}1 \,\subseteq\, \rho_{\CRo}   \cdot \conv(\mathcal{F})$} \label{sec:cr1blowup}

	We assume that $k \ge 40$, otherwise $r \le k < 40$ and the result follows from Theorem \ref{thm:cr2}. We prove the desired inclusion by comparing the support function of these sets (Proposition C.3.3.1 of~\cite{hiriart2012fundamentals}), namely we show that for every matrix $\bm{C} \in \R^{d \times r}$
	\begin{align}
		\max_{\bm{V} \in \CRo} \ip{\bm{C}}{\bm{V}} ~\le~ \rho_{\CRo} \cdot \max_{\bm{V} \in \conv(\F)} \ip{\bm{C}}{\bm{V}}. \label{eq:PG0}
	\end{align}

	It will suffice to prove the following sparsification result for the optimum of the left-hand side. 
	
	\begin{lemma} \label{lemma:PGmain}
	Assume $k \ge 40$. Consider $\bm{C} \in \R^{d \times r}$ and let $\bm{V}^*$ be a matrix attaining the maximum on the left-hand side of \eqref{eq:PG0}, namely $\bm{V}^* \in \argmax_{\bm{V} \in \CRo} \ip{\bm{C}}{\bm{V}}$. Then there is a matrix $\bm{V}$ with the following properties:
	\begin{enumerate}
		\item (Operator norm) $\|\bm{V}\|_{op} \le 1 + \max\{\sqrt{18 \pi},\, 6 \sqrt{\log 50r}\}$
		
		\item (Sparsity) $\bm{V}$ is $k$-row-sparse, namely $\|\bm{V}\|_0 \le k$
		
		\item (Value) $\ip{\bm{C}}{\bm{V}} \ge \frac{1}{2}\,\ip{\bm{C}}{\bm{V}^*}$.
	\end{enumerate}
	\end{lemma}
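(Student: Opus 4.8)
The plan is to prove Lemma~\ref{lemma:PGmain} by a randomized sparsification argument in which the sampling distribution is dictated by the Pietsch--Grothendieck factorization of $\bm{V}^*$. First I would extract from the constraint $\|\bm{V}^*\|_{2\to1}\le\sqrt{k}$ a convenient set of weights. Using operator-norm duality this constraint reads $\|\bm{V}^{*\top}\|_{\infty\to2}\le\sqrt k$, so viewing $\bm{V}^{*\top}$ as an operator from $\ell_\infty^d$ into the Hilbert space $\R^r$, the little Grothendieck / Pietsch domination theorem produces a probability vector $\bm{q}=(q_1,\dots,q_d)$ on $[d]$ with $\|\sum_i f_i \bm{V}^*_i\|_2 \le \sqrt{\tfrac{\pi}{2}}\,\sqrt k\,(\sum_i q_i f_i^2)^{1/2}$ for all $\bm{f}\in\R^d$. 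Dualizing this (substituting $f_i \mapsto f_i/\sqrt{q_i}$) is equivalent to the matrix inequality
\[
\sum_{i=1}^d \frac{1}{q_i}\,\bm{V}^*_i\otimes\bm{V}^*_i ~\preceq~ \tfrac{\pi k}{2}\,\bm{I}^r ,
\]
which in particular yields the uniform ``leverage'' bound $\|\bm{V}^*_i\|_2^2/q_i \le \tfrac{\pi k}{2}$ for every $i$. I would also record a deterministic structural fact about the optimizer: writing $a_i := \ip{\bm{C}_i}{\bm{V}^*_i}$, we must have $a_i \ge 0$ for all $i$, since zeroing out any single row of $\bm{V}^*$ keeps all three constraints of $\CRo$ satisfied (each is monotone under deleting a row) and so, were some $a_i<0$, deleting that row would strictly increase $\ip{\bm{C}}{\bm{V}^*}$, contradicting optimality. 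Consequently $\mathrm{OPT}:=\ip{\bm{C}}{\bm{V}^*}=\sum_i a_i\ge 0$.

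Next is the construction. I would sample the support from a \emph{mixture} distribution $p_i := \tfrac12 q_i + \tfrac12\,a_i/\mathrm{OPT}$ (the case $\mathrm{OPT}=0$ being trivial, as $\bm{V}=\bm{0}$ works). Drawing $i_1,\dots,i_k$ i.i.d.\ from $\bm{p}$ and setting $\bm{V} := \tfrac1k\sum_{t=1}^k \tfrac{1}{p_{i_t}}\,\bm{e}^{i_t}\otimes\bm{V}^*_{i_t}$ immediately gives a matrix with at most $k$ nonzero rows (Property~2), and since $\E[\tfrac{1}{p_{i_t}}\bm{e}^{i_t}\otimes\bm{V}^*_{i_t}]=\bm{V}^*$ we get the unbiasedness $\E[\bm{V}]=\bm{V}^*$ and $\E[\ip{\bm{C}}{\bm{V}}]=\mathrm{OPT}$. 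The mixture is engineered so that both halves are controlled: from the first half $1/p_i \le 2/q_i$, so the leverage bound degrades only by a factor of $2$; from the second half $a_i/p_i \le 2\,\mathrm{OPT}$, so each value increment is bounded.

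The two quantitative estimates then run in parallel. For the operator norm I would write $\|\bm{V}\|_{op}\le \|\bm{V}^*\|_{op}+\|\bm{V}-\bm{V}^*\|_{op}\le 1+\|\bm{V}-\bm{V}^*\|_{op}$ and bound the mean-zero deviation $\bm{V}-\bm{V}^* = \tfrac1k\sum_t(\bm{Z}_t-\bm{V}^*)$ by a matrix Bernstein/Chernoff inequality, where $\bm{Z}_t:=\tfrac{1}{p_{i_t}}\bm{e}^{i_t}\otimes\bm{V}^*_{i_t}$. The variance proxy is governed by $\|\sum_i \tfrac1{p_i}\bm{V}^*_i\otimes\bm{V}^*_i\|_{op}\le \pi k$ from the (degraded) matrix inequality above, and the per-term size by the leverage bound; the dimension factor $r$ in the matrix tail bound is exactly what produces the $\sqrt{\log r}$ growth, and tuning the failure probability to a small absolute constant (of order $1/50$) yields $\|\bm{V}-\bm{V}^*\|_{op}\le \max\{\sqrt{18\pi},\,6\sqrt{\log 50r}\}$, hence Property~1. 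For the value I would use that $\ip{\bm{C}}{\bm{V}}=\tfrac1k\sum_t \tfrac{a_{i_t}}{p_{i_t}}$ is a \emph{nonnegative} random variable with mean $\mathrm{OPT}$ whose increments are bounded by $2\,\mathrm{OPT}$; this gives a second-moment bound $\E[\ip{\bm{C}}{\bm{V}}^2]\le 3\,\mathrm{OPT}^2$, and Paley--Zygmund then lower bounds $\Pr[\ip{\bm{C}}{\bm{V}}\ge\tfrac12\mathrm{OPT}]$ by an absolute constant. Since this constant exceeds the operator-norm failure probability, a union bound shows both events occur simultaneously with positive probability, producing a single $\bm{V}$ satisfying Properties~1--3.

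The main obstacle I anticipate is the operator-norm concentration with the exact stated constants. Two points require care: first, one must use the mean-zero decomposition (rather than concentrating $\bm{V}^\top\bm{V}$ directly) to obtain the clean leading ``$1+$'', since sampling with replacement inflates $\E[\bm{V}^\top\bm{V}]$ by collision terms; and second, the per-term operator-norm size entering matrix Bernstein can be large for rows of tiny weight $p_i$, so one likely needs to truncate or separately dispatch such rows using the leverage bound. Getting the interaction between the three ingredients --- the $\sqrt{\pi/2}$ from Grothendieck, the dimension factor $r$, and the chosen constant failure probability --- to collapse into $1+\max\{\sqrt{18\pi},\,6\sqrt{\log 50r}\}$ is where the bulk of the technical work lies; everything else (sparsity, unbiasedness, and the value bound) is comparatively routine once the mixture distribution is in place.
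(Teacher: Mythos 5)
Your overall strategy is the same as the paper's: sparsify $\bm{V}^*$ by importance sampling with weights coming from the Pietsch--Grothendieck factorization, control the operator norm by a matrix concentration inequality, control the value by a second-moment argument, and union bound. (The paper keeps each row independently with probability $\bar p_i$ and rescales, rather than drawing $k$ indices with replacement, and it applies matrix Chernoff directly to $(\wtilde{\bm{V}}_I)^\top\wtilde{\bm{V}}_I$ rather than to a mean-zero deviation, but these are cosmetic differences.)

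There is, however, a genuine gap in your operator-norm step, and it is exactly the one you flag at the end. With the mixture $p_i=\tfrac12 q_i+\tfrac12 a_i/\mathrm{OPT}$, the per-term operator norm entering matrix Bernstein is $\|\tfrac1{p_i}\bm{e}^i\otimes\bm{V}^*_i\|_{op}=\|\bm{V}^*_i\|_2/p_i$, and neither component of the mixture bounds it: from the leverage bound $\|\bm{V}^*_i\|_2^2/q_i\le \tfrac{\pi k}{2}$ one only gets $\|\bm{V}^*_i\|_2/q_i\le \sqrt{\pi k/(2q_i)}$, which blows up as $q_i\to 0$, and the term $a_i/\mathrm{OPT}$ is useless here since a row can have $a_i=\ip{\bm{C}_i}{\bm{V}^*_i}=0$ while $\|\bm{V}^*_i\|_2/q_i$ is huge. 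Your suggested remedy (truncating small-weight rows) does not obviously close this: the rows you would truncate can carry a non-negligible fraction of $\mathrm{OPT}$ unless the truncation threshold is taken so large that the Bernstein tail becomes vacuous. The missing ingredient is the third constraint of $\CRo$, namely $\sum_i\|\bm{V}_i\|_2\le\sqrt{rk}$, which your proof never invokes; the paper adds a component proportional to $\|\bm{V}^*_i\|_2/\sum_{i'}\|\bm{V}^*_{i'}\|_2$ to the sampling probability precisely so that $\|\bm{V}^*_i\|_2/p_i\le \tfrac{6}{k}\sum_{i'}\|\bm{V}^*_{i'}\|_2\le 6$, giving the uniform per-term bound $R$ that the matrix Chernoff/Bernstein inequality requires. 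Without that (or an equivalent device), the $O(\sqrt{\log r})$ operator-norm bound does not follow from your construction. The sparsity, unbiasedness, nonnegativity of the $a_i$, and the Paley--Zygmund value bound are all fine.
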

	
	Indeed, if we have such a matrix $\bm{V}$ then $\frac{\bm{V}}{\|\bm{V}\|_{op}}$ belongs to the sparse set $\F$ and has value $\ip{\bm{C}}{\frac{\bm{V}}{\|\bm{V}\|_{op}}} \ge \frac{1}{2 \cdot (1 + \max\{\sqrt{18 \pi},\, 6 \sqrt{\log 50r}\})}\cdot \ip{\bm{C}}{\bm{V}^*}$, showing that \eqref{eq:PG0} holds. 
	
	For the remainder of the section, we prove Lemma \ref{lemma:PGmain}. The idea is to randomly sparsify $\bm{V}^*$ while controlling the operator norm and value. A standard procedure is to sample the rows of $\bm{V}^*$ with probability proportional to their squared length (see~\cite{kannan2017randomized} for this and other sampling methods). However, these more standard methods do not seem to effectively leverage the information that $\|\bm{V}^*\|_{2 \rightarrow 1} \le \sqrt{k}$. Instead, we use a novel sampling more adapted to the $\ell_{2\rightarrow 1}$-norm based on a weighting of the rows of $\bm{V}^*$ given by the so-called \emph{Pietsch-Grothendieck factorization}~\cite{pietsch1978operator}. We state it in a convenient form that follows by applying Theorem 2.2 of~\cite{tropp2009column} to the transpose.

\begin{theorem}[Pietsch-Grothendieck factorization] \label{lemma:PG-factorization} Any matrix $\bm{V} \in \mathbb{R}^{d \times r}$ can be factorized as $\bm{V} = \bm{W} \bm{T} $ of size $\bm{T} \in \mathbb{R}^{d \times r}, ~ \bm{W} \in \mathbb{R}^{d \times d}$, where
	\begin{itemize}
		\item $\bm{W}$ is a nonnegative, diagonal matrix with $\sum_i \bm{W}_{ii}^2 = 1$
		\item $\|\bm{V}\|_{2 \rightarrow 1} \leq \|\bm{T}\|_{\text{op}} \leq \sqrt{\pi / 2} \cdot \|\bm{V}\|_{2 \rightarrow 1}.$
	\end{itemize}
\end{theorem}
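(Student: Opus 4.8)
The plan is to recast the factorization as a min--max optimization over diagonal weightings and to evaluate the resulting saddle value by a Gaussian comparison. Writing $p_i := \bm{W}_{ii}^2$, the normalization $\sum_i \bm{W}_{ii}^2 = 1$ says that $p$ lies in the probability simplex $\Delta_d$, and since $\bm{W}$ is diagonal the $i$-th row of $\bm{T} = \bm{W}^{-1}\bm{V}$ is $\bm{V}_i/\sqrt{p_i}$. Hence
$$\|\bm{T}\|_{\textup{op}}^2 = \lambda_{\max}\!\left(\bm{T}^\top \bm{T}\right) = \lambda_{\max}\!\left(\sum_{i=1}^d \frac{1}{p_i}\,\bm{V}_i^\top \bm{V}_i\right),$$
so the theorem reduces to showing that $\min_{p \in \Delta_d}\lambda_{\max}\!\left(\sum_i p_i^{-1}\bm{V}_i^\top\bm{V}_i\right) \le \tfrac{\pi}{2}\|\bm{V}\|_{2\to1}^2$; the minimizing $p^*$ then yields $\bm{W}$ via $\bm{W}_{ii}=\sqrt{p_i^*}$, and $\bm{T}=\bm{W}^{-1}\bm{V}$ is the desired factor.

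First I would linearize the eigenvalue using $\lambda_{\max}(\bm{M}) = \max_{\bm{X}\succeq 0,\,\textup{Tr}(\bm{X})=1}\ip{\bm{M}}{\bm{X}}$, which turns the objective into $g(p,\bm{X}) := \sum_i p_i^{-1}\ip{\bm{V}_i^\top\bm{V}_i}{\bm{X}}$. Note that $\ip{\bm{V}_i^\top\bm{V}_i}{\bm{X}} = \bm{V}_i\bm{X}\bm{V}_i^\top \ge 0$ whenever $\bm{X}\succeq 0$, so $g$ is convex in $p$ (a sum of terms $c_i/p_i$ with $c_i\ge 0$) and linear, hence concave, in $\bm{X}$, with both feasible sets convex and compact. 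By Sion's minimax theorem I can exchange the order to get $\min_p\max_{\bm{X}} g = \max_{\bm{X}}\min_p g$. The inner minimization $\min_{p\in\Delta_d}\sum_i c_i/p_i$ is solved by $p_i \propto \sqrt{c_i}$ (Cauchy--Schwarz), with value $(\sum_i\sqrt{c_i})^2$. Thus the saddle value equals $\max_{\bm{X}\succeq 0,\,\textup{Tr}(\bm{X})=1}\big(\sum_i \sqrt{\bm{V}_i\bm{X}\bm{V}_i^\top}\big)^2$, and it remains to bound the inner sum by $\sqrt{\pi/2}\,\|\bm{V}\|_{2\to1}$.

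The key step, and the source of the constant $\sqrt{\pi/2}$, is a Gaussian identity. Fixing $\bm{X}\succeq 0$ with $\textup{Tr}(\bm{X})=1$ and letting $\bm{g}\sim N(\bm{0},\bm{X})$, each $\ip{\bm{V}_i}{\bm{g}}$ is a centered one-dimensional Gaussian with variance $\bm{V}_i\bm{X}\bm{V}_i^\top$, so $\E|\ip{\bm{V}_i}{\bm{g}}| = \sqrt{2/\pi}\,\sqrt{\bm{V}_i\bm{X}\bm{V}_i^\top}$. Summing and using linearity of expectation,
$$\sum_{i=1}^d \sqrt{\bm{V}_i\bm{X}\bm{V}_i^\top} = \sqrt{\tfrac{\pi}{2}}\;\E\|\bm{V}\bm{g}\|_1 \le \sqrt{\tfrac{\pi}{2}}\,\|\bm{V}\|_{2\to1}\,\E\|\bm{g}\|_2 \le \sqrt{\tfrac{\pi}{2}}\,\|\bm{V}\|_{2\to1},$$
where the first inequality is the definition of $\|\cdot\|_{2\to1}$ and the last uses $\E\|\bm{g}\|_2 \le \sqrt{\E\|\bm{g}\|_2^2} = \sqrt{\textup{Tr}(\bm{X})} = 1$. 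Squaring this and tracing back through the minimax chain gives exactly $\min_{p\in\Delta_d}\|\bm{T}\|_{\textup{op}}^2 \le \tfrac{\pi}{2}\|\bm{V}\|_{2\to1}^2$, as needed.

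I expect the main obstacle to be the technical justification of the minimax exchange, not the Gaussian estimate, which is clean. The term $c_i/p_i$ is finite and convex only on the open simplex and blows up on its boundary, so a direct appeal to Sion needs care; I would restrict $p$ to the compact set $\{p : p_i \ge \e,\ \sum_i p_i = 1\}$, apply the minimax theorem there, and then let $\e \to 0$, simultaneously disposing of the degenerate case where a row $\bm{V}_i$ vanishes (set $\bm{W}_{ii}=0$ and $\bm{T}_i = \bm{0}$, which is consistent with $\bm{V}_i = \bm{W}_{ii}\bm{T}_i$ and does not disturb $\sum_i \bm{W}_{ii}^2=1$). Establishing existence of the limiting minimizer $p^*$ and recovering $\bm{W}$ and $\bm{T}$ from it are then routine compactness arguments.
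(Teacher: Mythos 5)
Your proof is correct, but it takes a different route from the paper: the paper does not prove this statement at all --- it imports it, noting only that it ``follows by applying Theorem 2.2 of Tropp's column-subset-selection paper to the transpose'' (using $\|\bm{V}\|_{2\rightarrow 1} = \|\bm{V}^{\top}\|_{\infty \rightarrow 2}$). What you have supplied is a self-contained derivation, and it is essentially the classical duality proof of the little-Grothendieck/Pietsch factorization: linearize $\lambda_{\max}$ over density matrices $\bm{X}$, exchange $\min_p$ and $\max_{\bm{X}}$ by Sion, solve $\min_{p \in \Delta_d} \sum_i c_i/p_i = \bigl(\sum_i \sqrt{c_i}\bigr)^2$ by Cauchy--Schwarz, and extract the constant from the Gaussian identity $\E|N(0,\sigma^2)| = \sqrt{2/\pi}\,\sigma$ together with $\|\bm{V}\bm{g}\|_1 \le \|\bm{V}\|_{2\rightarrow 1}\|\bm{g}\|_2$ and $\E\|\bm{g}\|_2 \le \sqrt{\textup{Tr}(\bm{X})} = 1$. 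Every step checks out: $\bm{T}^\top\bm{T} = \sum_i p_i^{-1} \bm{V}_i^\top \bm{V}_i$ is the right reduction, $g(p,\bm{X})$ is convex--concave on the restricted domain, and the Gaussian computation yields exactly $\tfrac{\pi}{2}\|\bm{V}\|_{2\rightarrow 1}^2$ after squaring. You also correctly identify the only delicate point --- $c_i/p_i$ degenerates at the simplex boundary, so Sion must be applied on $\{p : p_i \ge \e\}$ and the limit $\e \to 0$ taken (Dini's theorem gives uniform convergence of the continuous functions $\bm{X} \mapsto \min_{p_i \ge \e} g(p,\bm{X})$ on the compact set of density matrices, so the max passes to the limit), with zero rows of $\bm{V}$ handled by $\bm{W}_{ii} = 0$, $\bm{T}_i = \bm{0}$. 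The trade-off between the two routes: the paper's citation is shorter and delegates correctness to a known reference, while your argument makes the constant $\sqrt{\pi/2}$ transparent (it is exactly the Gaussian $L_1$--$L_2$ ratio) and makes the paper self-contained, at the cost of the compactness/limiting technicalities you flagged.
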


	So first apply this theorem to obtain a decomposition $\bm{V}^* = \bm{W} \bm{T}$. Notice that this means the $i$th row of $\bm{V}^*$ is just the $i$th row of $\bm{T}$ multiplied by the weight $\bm{W}_{ii}$. Define the ``probability''
	\begin{align*}
		p_i := \min\left\{ \frac{k}{6} \bigg(\bm{W}_{ii}^2 + \frac{\|\bm{V}^*_{i,:}\|_2}{\sum_{i' = 1}^d \|\bm{V}^*_{i',:}\|_2} \bigg), ~ 1 \right\},
	\end{align*}
	where the minimization between the first term and 1 is to make it a bonafide probability.
	\footnote{For some intuition: The first term in the parenthesis controls the variance of $\wtilde{\bm{W}}_{ii}$, which is $\Var(\wtilde{\bm{W}}_{ii}) \le \frac{\bm{W}^2_{ii}}{p_i} \le \frac{6}{k}$; the second term controls the largest size of a row of $\wtilde{\bm{V}}$, which is $\|\wtilde{\bm{V}}_{i,:}\|_2 \le \left\| \frac{\bm{V}^*_{i,:}}{p_i} \right \|_2 \le \frac{6}{k} \sum_{i'} \|\bm{V}^*_{i',:}\|_2$, which is at most $6\sqrt{\frac{r}{k}} \le 6$ because $\bm{V}^* \in \CRo$.} We then randomly sparsify $\bm{V}^*$ by keeping each row $i$ with probability $\bar{p}_i$ and normalizing it, i.e.,
	 
	\begin{itemize} \itemsep6pt
		\item Compute the random diagonal matrix $\wtilde{\bm{W}}$ with $\wtilde{\bm{W}}_{ii} := \e_i\,\frac{\bm{W}_{ii}}{p_i},$ and $\e_i$ (the indicator that we keep row $i$) takes value 1 with probability $p_i$ and 0 with probability $1-p_i$ (and the $\e_i$'s are independent).
	
		\item Set the random matrix $\wtilde{\bm{V}} := \wtilde{\bm{W}} \bm{T}$, where $\wtilde{\bm{W}}$ is obtained as above and $\bm{T}$ is still the one from the Pietsch-Grothendieck factorization of $\bm{V}^*$.
	\end{itemize}
	Based on the above construction, $\E \wtilde{\bm{W}} = \bm{W}$, and hence this procedure is unbiased: $\E \wtilde{\bm{V}} = \bm{V}^*$. We now show that $\wtilde{\bm{V}}$ satisfies each of the desired items from Lemma \ref{lemma:PGmain} with good probability, and then use a union bound to exhibit a matrix that proves the lemma. 


	\paragraph{Sparsity.} The number of rows $\|\wtilde{\bm{V}}\|_0$ of $\wtilde{\bm{V}}$ is precisely $\sum_{i =1}^d \e_i$, whose expectation is
	\begin{align*}
		\sum_{i =1}^d p_i ~{\le}~ \frac{k}{6} \bigg(\sum_i \bm{W}_{ii}^2 ~+~ 1 \bigg) ~=~ \frac{k}{3}.
	\end{align*}
	Employing the multiplicative Chernoff bound (Lemma \ref{lemma:chernoff} in Appendix \ref{app:conc}) we get 
	\begin{align}
		\Pr\bigg(\|\wtilde{\bm{V}}\|_{0} > k \bigg) \,\le\, \bigg(\frac{2e}{6}\bigg)^k \,<\, \frac{1}{50},  \label{eq:PGsparsity}
	\end{align}
	where the last inequality uses that $k \ge 40$. 
	

	\paragraph{Operator norm.} 
	Let $I$ be the indices $i$ where $p_i < 1$, and let the complement of $I$ be $I^c = [d] \setminus I$ (so $p_i = 1$ and hence $\wtilde{\bm{V}}_{i,:} = \bm{V}^*_{i,:}$ for all $i \in I^c$). From the triangle inequality we can see that $\|\wtilde{\bm{V}}\|_{\textup{op}} \le \|\wtilde{\bm{V}}_{I,:}\|_{\textup{op}} + \|\wtilde{\bm{V}}_{I^c,:}\|_{\textup{op}}$. Moreover, 
	\begin{align*}
		\|\wtilde{\bm{V}}_{I^c,:}\|_{\textup{op}} = \|\bm{V}^*_{I^c,:}\|_{\textup{op}} \le \|\bm{V}^*\|_{\textup{op}} \le 1, 
	\end{align*}
	where the first equality is because the rows of $\wtilde{\bm{V}}_{I^c,:}$ are exactly equal to the rows of $\bm{V}^*_{I^c,:}$ and the first inequality is because deleting rows cannot increase the operator norm, and the last inequality is because $\bm{V}^* \in \F$. Combining these observations we get that $\|\wtilde{\bm{V}}\|_{\text{op}} \le \|\wtilde{\bm{V}}_{I,:}\|_{\text{op}} + 1$, and so we focus on controlling the operator norm of $\wtilde{\bm{V}}_{I,:}$. We do that by applying a concentration inequality to the largest eigenvalue of the positive semi-definite (PSD) matrix $\wtilde{\bm{V}}_{I,:}^{\top} \wtilde{\bm{V}}_{I,:}$; the following is Theorem 1.1 of~\cite{tropp2012user} plus a simple estimate (see for example page 65 of~\cite{mitzenmacher2017probability}).
	\begin{theorem} \label{thm:matrixChernoff}
		Let $\bm{X}_1,\ldots, \bm{X}_n \in \R^{r \times r}$ be independent, random, symmetric matrices of size $r \times r$. Assume with probability 1 each $\bm{X}_i$ is PSD and has largest eigenvalue $\lambda_{\max}(\bm{X}_i) \le R$. Then
		\begin{align*}
			\Pr\left(\lambda_{\max}\left(\sum_{i = 1}^n \bm{X}_i\right) \ge \alpha \right) < r \cdot 2^{-\alpha/R}
		\end{align*} 
		for every $\alpha \ge 6 \cdot \lambda_{\max}\left(\E \left[\sum_{i = 1}^n \bm{X}_i \right] \right)$.
	\end{theorem}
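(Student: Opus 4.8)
The plan is to follow the matrix Laplace transform method that underlies Tropp's user-friendly tail bounds, and then to convert the resulting classical Chernoff expression into the clean form claimed here by an elementary estimate. Throughout write $\bm{Y} := \sum_i \bm{X}_i$ and $\mu_{\max} := \lambda_{\max}(\E \bm{Y})$. First I would record the Laplace transform bound: since $\lambda_{\max}$ is monotone with respect to $\preceq$ and $e^{\theta \lambda_{\max}(\bm{Y})} = \lambda_{\max}(e^{\theta \bm{Y}}) \le \text{tr}\, e^{\theta \bm{Y}}$, Markov's inequality gives, for every $\theta > 0$,
$$\Pr\big(\lambda_{\max}(\bm{Y}) \ge \alpha\big) ~\le~ e^{-\theta \alpha}\, \E\, \text{tr}\, e^{\theta \bm{Y}}.$$

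The heart of the argument is to control $\E\,\text{tr}\exp(\sum_i \theta \bm{X}_i)$ despite the fact that the summands need not commute. Here I would invoke the subadditivity of matrix cumulant generating functions --- a consequence of Lieb's concavity theorem --- namely $\E\,\text{tr}\exp(\sum_i \theta \bm{X}_i) \le \text{tr}\exp(\sum_i \log \E\, e^{\theta \bm{X}_i})$. This is the single genuinely nontrivial ingredient and would be the main obstacle for a fully self-contained proof; since the statement is attributed to Theorem 1.1 of~\cite{tropp2012user}, I would simply cite it rather than reprove Lieb's theorem.

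To bound each summand I would use that $e^{\theta x} \le 1 + \frac{e^{\theta R}-1}{R} x$ for $x \in [0,R]$ (convexity, the chord through $(0,1)$ and $(R, e^{\theta R})$), transferred through the spectral calculus to the PSD matrix $\bm{X}_i$; taking expectations gives $\E\, e^{\theta \bm{X}_i} \preceq \bm{I} + \frac{e^{\theta R}-1}{R} \E \bm{X}_i$, and then operator monotonicity of $\log$ together with $\log(1+x) \le x$ yields $\log \E\, e^{\theta \bm{X}_i} \preceq \frac{e^{\theta R}-1}{R} \E \bm{X}_i$. Summing over $i$, applying monotonicity of $\text{tr}\exp$, and using $\text{tr}\exp(\bm{M}) \le r\, e^{\lambda_{\max}(\bm{M})}$ produces
$$\Pr\big(\lambda_{\max}(\bm{Y}) \ge \alpha\big) ~\le~ r \exp\Big(-\theta \alpha + \tfrac{e^{\theta R}-1}{R}\,\mu_{\max}\Big).$$
Optimizing over $\theta$ by taking $\theta = \frac{1}{R}\log(\alpha/\mu_{\max})$ gives the familiar Chernoff form $r\,[\,e^{\delta}/(1+\delta)^{1+\delta}\,]^{\mu_{\max}/R}$ with $1+\delta = \alpha/\mu_{\max}$.

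It remains to carry out the simple estimate that turns this into $r\cdot 2^{-\alpha/R}$. Setting $s := \alpha/\mu_{\max} \ge 6$, it suffices to show $e^{s-1}/s^{s} \le 2^{-s}$; taking logarithms this is equivalent to $s\log(2e/s) \le 1$, and since $2e/s \le e/3 < 1$ for $s \ge 6$ the left-hand side is in fact negative, so the inequality holds strictly. Raising both sides to the power $\mu_{\max}/R$ converts $2^{-s}$ into $2^{-s\mu_{\max}/R} = 2^{-\alpha/R}$, giving the strict bound $\Pr(\lambda_{\max}(\bm{Y}) \ge \alpha) < r\cdot 2^{-\alpha/R}$ for all $\alpha \ge 6\mu_{\max}$, as claimed. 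Given the citation to Tropp, the only real work is this last estimate, which is routine calculus; the conceptual difficulty lives entirely in the Lieb-based subadditivity step cited above.
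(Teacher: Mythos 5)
Your proposal is correct and follows exactly the route the paper intends: the paper gives no written proof, merely citing Theorem 1.1 of Tropp's user-friendly tail bounds plus "a simple estimate" from page 65 of Mitzenmacher--Upfal, and your argument is a faithful expansion of precisely those two ingredients (the Laplace-transform/Lieb machinery for the classical matrix Chernoff form, then the calculus showing $e^{s-1}/s^{s} < 2^{-s}$ for $s = \alpha/\mu_{\max} \ge 6$). The final estimate checks out, so nothing further is needed.
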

	
	Notice that $\wtilde{\bm{V}}_{I,:}^{\top} \wtilde{\bm{V}}_{I,:}$ can be written as a sum of the independent PSD matrices:
	    \begin{align}
		    \wtilde{\bm{V}}_{I,:}^{\top} \wtilde{\bm{V}}_{I,:} = \sum_{i \in I} \wtilde{\bm{V}}_{i,:} \otimes \wtilde{\bm{V}}_{i,:} 
		     \label{eq:quadratic1}
	    \end{align}
	Therefore, to bound the operator norm $\|\wtilde{\bm{V}}_{I,:}\|_{\text{op}} = \sqrt{\lambda_{\max}(\wtilde{\bm{V}}_{I,:}^{\top} \wtilde{\bm{V}}_{I,:})}$ we will apply Theorem \ref{thm:matrixChernoff} over this sum. But for that, we will need to control the terms involved in the bound/assumptions of this theorem. 
	
	\begin{itemize} \itemsep6pt
	    \item \textbf{Term {$\lambda_{\max}(\wtilde{\bm{V}}_{i,:} \otimes \wtilde{\bm{V}}_{i,:})$.}} Notice that we have the identity $\wtilde{\bm{V}}_{i,:} = \frac{\e_i}{p_i} \bm{V}_{i,:}^*$. Therefore,
	    \begin{align*}
	        & ~ \lambda_{\max}\Big(\wtilde{\bm{V}}_{i,:} \otimes \wtilde{\bm{V}}_{i,:} \Big) = \lambda_{\max}\bigg(\bigg(\frac{\e_i}{p_i} \bm{V}_{i,:}^*\bigg) \otimes \bigg(\frac{\e_i}{p_i} \bm{V}_{i,:}^*\bigg)\bigg) \\
	        \le & ~ \lambda_{\max}\bigg(\frac{1}{p_i^2} (\bm{\bm{V}}_{i,:}^* \otimes \bm{V}_{i,:}^*)\bigg) = \frac{\| \bm{V}_{i,:}^*\|_2^2}{p_i^2} \le \frac{36 (\sum_{i' = 1}^d \|\bm{V}^*_{i',:} \|_2)^2}{k^2} \le 36,
	    \end{align*}
	    where the last inequality uses the fact $\bm{V}^* \in \CRo$ and hence $\sum_{i' = 1}^d \|\bm{V}^*_{i',:}\|_2 \le \sqrt{rk} \le k$. 

	    \item \textbf{Term $\lambda_{\max}\left(\E \left[\wtilde{\bm{V}}_{I,:}^{\top} \wtilde{\bm{V}}_{I,:}\right] \right)$.} First notice that we also have the identity $\wtilde{\bm{V}}_{i,:} = \e_i \,\frac{\bm{W}_{ii}}{p_i}\, \bm{T}_{i,:}$. Moreover, by definition of $p_i$ we have $\E \left(\e_i \frac{\bm{W}_{ii}}{p_i}\right)^2 \le \frac{6}{k}$, and so $\E\big[\wtilde{\bm{V}}_{i,:} \otimes \wtilde{\bm{V}}_{i,:}\big] \preceq \frac{6}{k}  (\bm{T}_{i,:} \otimes \bm{T}_{i,:})$. Therefore, using \eqref{eq:quadratic1} we have
	    \begin{align*}
		    \E \left[\wtilde{\bm{V}}_{I,:}^{\top} \wtilde{\bm{V}}_{I,:}\right] \,\preceq\, \sum_{i \in I} \frac{6}{k} (\bm{T}_{i,:} \otimes \bm{T}_{i,:}) \,\preceq\, \frac{6}{k} \sum_{i = 1}^d (\bm{T}_{i,:} \otimes \bm{T}_{i,:}) \,=\, \frac{6}{k}\, \bm{T}^{\top} \bm{T}. 
	    \end{align*}
	    By the guarantee provided by the Pietsch-Grothendieck factorization $\|\bm{T}\|_{\textup{op}} \le \sqrt{\pi/2}\, \|\bm{V}^*\|_{2 \rightarrow 1}$, and  since $\bm{V}^* \in \CRo$ we have $\|\bm{V}^*\|_{2 \rightarrow 1} \le \sqrt{k}$, so applying these bounds to the previous displayed inequality gives
	    \begin{align*}
		    \lambda_{\max}\left(\E \left[\wtilde{\bm{V}}_{I,:}^{\top} \wtilde{\bm{V}}_{I,:}\right] \right) \,\le\, \frac{6}{k} \lambda_{\max}(\bm{T}^\top \bm{T}) \,=\, \frac{6}{k} \|\bm{T}\|_{\textup{op}}^2 \,\le\, 3\pi. 
	    \end{align*}
	    
	\end{itemize}

	Having controlled these terms, we apply Theorem \ref{thm:matrixChernoff} with $\bm{X}_i = \wtilde{\bm{V}}_{i,:} \otimes \wtilde{\bm{V}}_{i,:}$, {$R=36$, and $\alpha = \max\{18 \pi, \, 36 \log 50 r\}$, which by the last item is at least $6 \cdot \lambda_{\max} (\E [\sum_{i \in I} \wtilde{\bm{V}}_{i,:} \otimes \wtilde{\bm{V}}_{i,:}])$, to get}
%
	%
	\begin{align*}
		\Pr \Big(\|\wtilde{\bm{V}}_{I,:}\|_{\text{op}} \ge \sqrt{\alpha} \Big) ~=~ \Pr \Big(\lambda_{\max}\left( \wtilde{\bm{V}}_{I,:}^{\top} \wtilde{\bm{V}}_{I,:} \right) \ge \alpha \Big) ~<~ r \cdot 2^{- \alpha / R} ~<~ \frac{1}{50}.  
	\end{align*}	
	Recalling we have $\|\wtilde{\bm{V}}\|_{\textup{op}} \le 1 + \|\wtilde{\bm{V}}_{I,:}\|_{\textup{op}}$, this gives that
	\begin{align}
	\|\wtilde{\bm{V}}\|_{\textup{op}} > 1 + \max\{\sqrt{18 \pi},\, 6 \sqrt{\log 50r}\}   \label{eq:PGnorm}
	\end{align}
	happens with probability at most $\frac{1}{50}$.
	

	\paragraph{Value.} We want to show that with good probability $\ip{\bm{C}}{\wtilde{\bm{V}}} \ge \frac{1}{2} \ip{\bm{C}}{\bm{V}^*}$. We use throughout the following observation: for each row $i$ we have $\ip{\bm{C}_{i,:}}{\bm{V}^*_{i,:}} \ge 0$, since the set $\CRo$ is symmetric with respect to flipping the sign of a row and $\bm{V}^*$ maximizes $\ip{\bm{C}}{\bm{V}^*} = \sum_{i = 1}^d \ip{\bm{C}_{i,:}}{\bm{V}^*_{i,:}}$. Since $\E \Big[\wtilde{\bm{V}}\Big]= \bm{V}^*$, we have $\E \left[\ip{\bm{C}_{I,:}}{\wtilde{\bm{V}}_{I,:}}\right] = \ip{\bm{C}_{I,:}}{\bm{V}_{I,:}^*}$. Again since $\wtilde{\bm{V}}_{i,:} = \frac{\e_i}{p_i} \bm{V}_{i,:}^*$, we have
	\begin{align*}
		\Var\left(\ip{\bm{C}_{I, :}}{\wtilde{\bm{V}}_{I,:}}\right) &\,=\,  \Var \bigg(\sum_{i \in I} \ip{\bm{C}_{i,:}}{\wtilde{\bm{V}}_{i, :}}\bigg) = \sum_{i \in I} \Var\Big(\tfrac{\e_i}{p_i} \ip{\bm{C}_{i,:}}{\bm{V}^*_{i,:}}\Big) \\
		&\le \sum_{i \in I} \frac{\ip{\bm{C}_{i,:}}{\bm{V}^*_{i,:}}^2}{p_i} \\
		&\,\le\, \frac{6 \sum_{i'} \|\bm{V}_{i',:}^*\|_2}{k} \cdot \sum_{i \in I} \frac{\ip{\bm{C}_{i,:}}{\bm{V}^*_{i,:}}^2}{\|\bm{V}^*_{i,:}\|_2} \, \\
& \le\, 6 \cdot \Big( \max_{i \in I} \Big\langle \bm{C}_{i,:}, \tfrac{\bm{V}^*_{i,:}}{\|\bm{V}^*_{i,:}\|_2} \Big\rangle\Big) \cdot \ip{\bm{C}_{I,:}}{\bm{V}^*_{I,:}},
	\end{align*}
	where the second inequality uses the definition of $p_i$ and the last inequality uses that $\sum_{i'} \|\bm{V}_{i', :}^*\|_2 \le \sqrt{rk} \le k$ (since $\bm{V}^* \in \CRo$). Moreover, since $\frac{\bm{V}^*_{i,:}}{\|\bm{V}^*_{i,:}\|_2}$ also belongs to $\CRo$ \footnote{Formally, we can append zero rows to $\frac{\bm{V}^*_i}{\|\bm{V}^*_i\|_2}$ so that the resulting matrix is in $\CRo$.}, the optimality of $\bm{V}^*$ guarantees that $\ip{\bm{C}_{i,:}}{\tfrac{\bm{V}_{i,:}^*}{\|\bm{V}^*_{i,:}\|_2}} \le \ip{\bm{C}}{\bm{V}^*}$, and so we have the variance upper bound
	\begin{align*}
		\Var \left(\ip{\bm{C}_{I,:}}{\wtilde{\bm{V}}_{I,:}} \right) \,\le\, 6\cdot \ip{\bm{C}}{\bm{V}^*}^2.
	\end{align*}
	Using the fact that $\ip{\bm{C}_{I^c,:}}{\wtilde{\bm{V}}_{I^c,:}} = \ip{\bm{C}_{I^c,:}}{\bm{V}^*_{I^c,:}}$ and the one-sided Chebychev inequality (Lemma \ref{lemma:cheby} in Appendix \ref{app:conc}) we get 
	\begin{align}
	& ~ \Pr\bigg(\ip{\bm{C}}{\wtilde{\bm{V}}} \le \frac{1}{2} \ip{\bm{C}}{\bm{V}^*}\bigg)  \nonumber \\
	= & ~ \Pr\bigg(\ip{\bm{C}_{I,:}}{\wtilde{\bm{V}}_{I, :}} + \ip{\bm{C}_{I^c,:}}{\wtilde{\bm{V}}_{I^c, :}} \le \frac{1}{2} \ip{\bm{C}}{\bm{V}^*}\bigg) \nonumber  \\ 
	= & ~ \Pr\bigg(\ip{\bm{C}_{I,:}}{\wtilde{\bm{V}}_{I,:}}
	\le -\ip{\bm{C}_{I^c,:}}{\bm{V}^*_{I^c,:}} + \frac{1}{2} \ip{\bm{C}}{\bm{V}^*}\bigg) \nonumber \\
	=&~ \Pr\bigg(\ip{\bm{C}_{I,:}}{\wtilde{\bm{V}}_{I,:}} \le \ip{\bm{C}_{I,:}}{\bm{V}^*_{I,:}} - \frac{1}{2} \ip{\bm{C}}{\bm{V}^*}\bigg) \nonumber  \\
	\leq & ~ \frac{6}{6 + 1/4} = \frac{24}{25}.\label{eq:PGvalue}
	\end{align}
	

	\paragraph{Concluding the proof of Lemma \ref{lemma:PGmain}.}	Taking a union bound over inequalities \eqref{eq:PGsparsity}, \eqref{eq:PGnorm}, and \eqref{eq:PGvalue}, we see that with positive probability $\wtilde{\bf{V}}$ satisfies all items from Lemma \ref{lemma:PGmain}. This shows the existence of the desired matrix $\bf{V}$ and concludes the proof.


\subsection{Semi-definite programming representable relaxation of $\mathcal{CR}1$} \label{sec:SDR-L21-norm} 

We now show how to obtain an SDP-representable relaxation of $\CRo$, which we denote by $\CRo'$, that still has essentially the same approximation guarantee.
	
For that, first one can {capture} the constraint $\|\bm{V}\|_{\textup{op}} \le 1$ of $\CRo$ by the constraint $\left[\begin{array}{cc} \bm{I}^d& \bm{V} \\ \bm{V}^T& \bm{I}^r  \end{array} \right] \succeq \bm{0}^{d+ r, d+ r}$, which is the convex hull of the \emph{Stiefel manifold} $\{\bm{V} \,|\, \bm{V}^{\top} \bm{V} = \bm{I}^r\}$~\cite{gallivan2010note}.

Next, we use the results from~\cite{tropp2009column} to obtain an SDP relaxation of the constraint $\|\bm{V}\|_{2 \rightarrow 1} \leq \sqrt{k}$. {More precisely, Theorem 3.1 of~\cite{tropp2009column} (in transpose form) states the following.}
	
	\begin{theorem}[Theorem 3.1 of~\cite{tropp2009column}] \label{thm:troppPG}
		Consider any matrix $\bm{V} \in \R^{d \times r}$. Suppose that $\bm{V} = \bm{W} \bm{T}$, where $\bm{W} \in \R^{d \times d}$ is a diagonal non-negative matrix with $\textup{Tr}(\bm{W}^2) = 1$. Then for every $\alpha \ge \|\bm{T}\|_{\textup{op}}$ we have 
	\begin{align}	
		\lambda_{\max}(\bm{V} \bm{V}^{\top} - \alpha^2 \bm{W}^2) \leq 0.  \label{eq:troppPG}
	\end{align}
	Conversely, if $\bm{W} \in \R^{d \times d}$ is a diagonal non-negative matrix with $\textup{Tr}(\bm{W}^2) = 1$ and $\alpha$ satisfies \eqref{eq:troppPG}, then there is a decomposition $\bm{V} = \bm{W} \bm{T}$ with $\|\bm{T}\|_{\textup{op}} \le \alpha$.
	\end{theorem}
	
Now consider a {matrix $\bm{V} \in \R^{d \times r}$ such that $\|\bm{V}\|_{2 \rightarrow 1} \leq \sqrt{k}$.} Let $\bm{V} = \bm{W} \bm{T}$ be its Pietsch-Grothendieck factorization, so $\bm{W}$ is a diagonal non-negative matrix with $\text{Tr}(\bm{W}^2) = 1$, and $\|\bm{T}\|_{\text{op}} \leq \sqrt{\frac{\pi}{2}}\|\bm{V}\|_{2 \rightarrow 1} \le \sqrt{\frac{\pi}{2}} k$. 

By Theorem \ref{thm:troppPG} we have that $\frac{\pi}{2} k \bm{W}^2 - \bm{V} \bm{V}^{\top} \succeq \bm{0}^{d,  d}$. This means that there is a vector $\bm{h} \in \R^d_+$, namely that given via $\textrm{diag}(\bm{h}) = \frac{\pi}{2} k \bm{W}^2$, that satisfies
\begin{eqnarray*}
\exists  \bm{h} \in \mathbb{R}^d_{+} ~:~ \sum_{i = 1}^d \bm{h}_i \leq  \frac{\pi}{2} k ~~\textrm{and}~~ \textup{diag}(\bm{h}) - \bm{V} \bm{V}^{\top} \succeq \bm{0}^{d,  d}.
\end{eqnarray*}
This is what we will use as the relaxation to the constraint $\|\bm{V}\|_{2 \rightarrow 1} \leq \sqrt{k}$.

 Putting the above observations into  SDP-representable constraints using Schur complement, together with the other constraints of $\mathcal{CR}1$, we then obtain our SDP-representable relaxation of $\CRo$:
\begin{align*}
    \mathcal{CR}1' := \left\{ \bm{V} \in \mathbb{R}^{d \times r} \left| 
    \begin{array}{rcl}
	\textup{there exists } \bm{h} \in \mathbb{R}^d_{+}, &\textup{s.t.}&\\
        \sum_{i = 1}^d \|\bm{V}_{i, :}\|_2 &\leq& \sqrt{rk} \\
        \sum_{i =1}^d \bm{h}_i &\leq& \frac{\pi}{2} k \\
       \left[\begin{array}{rl} \bm{I}^d& \bm{V} \\ \bm{V}^{\top}& \bm{I}^r \end{array}\right] &\succeq& \bm{0}^{d +r, d + r} \\
       \left[\begin{array}{rl} \textup{diag}(\bm{h})& \bm{V} \\ \bm{V}^{\top}& \bm{I}^r \end{array}\right] &\succeq& \bm{0}^{d +r, d + r} \\
    \end{array}
    \right.\right\}.
\end{align*}
%

It is clear from the above derivation that $\mathcal{CR}1 \subseteq \mathcal{CR}1'$. For the remainder of the section, we prove the approximation guarantee that $\mathcal{CR}1'$ provides for the convex hull of $\mathcal{F}$, namely that
\begin{align*}
       \mathcal{CR}1' \subseteq \rho_{\mathcal{CR}1'} \cdot \text{conv}(\mathcal{F}) 
\end{align*}
for $\rho_{\mathcal{CR}1'} = \sqrt{\frac{\pi}{2}} \cdot \rho_{\CRo}$, {where $\rho_{\CRo}$ is the approximation factor of $\CRo$ from Theorem \ref{thm:cr1}.} This will prove Theorem~\ref{thm:cr1'} stated in the introduction. 

\begin{proof}[of Theorem \ref{thm:cr1'}]
	{It suffices to show that $\CRo'$ is a $\sqrt{\frac{\pi}{2}}$-approximation of $\CRo$, namely $\CRo' \subseteq \sqrt{\frac{\pi}{2}} \cdot \CRo$.}
	
	For that, consider $\bm{V} \in \mathcal{CR}1'$ and its corresponding vector $\bm{h} \in \R^d_+$. We first show that $\|\bm{V}\|_{2 \rightarrow 1} \le \sqrt{\frac{\pi}{2} k}$ (which is a factor of $\sqrt{\frac{\pi}{2}}$ more than the constraint present in $\CRo$). Define the diagonal matrix $\bm{W}$ via $\bm{W}^2 = \frac{\textup{diag}(\bm{h})}{\sum_i \bm{h}_i}$, which then has $\textup{Tr}(\bm{W}^2) = 1$. Moreover, the constraints in $\CRo'$ guarantee that $$\bm{0}^{d,d} \,\preceq\, \textup{diag}(\bm{h}) - \bm{V} \bm{V}^\top \,=\, \bigg(\sum_{i=1}^d \bm{h}_i\bigg) \cdot  \bm{W}^2 - \bm{V} \bm{V}^\top.$$ Then using the ``conversely'' part of Theorem \ref{thm:troppPG} with $\alpha = \sqrt{\sum_i \bm{h}_i}$, there is $\bm{T}$ such that $\bm{V} = \bm{W} \bm{T}$ and $\|\bm{T}\|_{\textup{op}} \le \sqrt{\sum_i \bm{h}_i} \le \sqrt{\frac{\pi}{2} k}$, the last inequality also following from the constraints of $\CRo'$. Moreover, we can see that $\|\bm{V}\|_{2 \rightarrow 1} \le \|\bm{T}\|_{\textup{op}}$, since there are vectors $\bm{y}$ (with $\|\bm{y}\|_{\infty} \le 1$) and $\bm{x}$ (with $\|\bm{x}\|_2 \le 1$) such that $$\|\bm{V}\|_{2 \rightarrow 1} = \bm{y}^\top \bm{V} \bm{x} = \bm{y}^\top \bm{W} \bm{T} \bm{x} \le \|\bm{T} \bm{x}\|_2 \le \|\bm{T}\|_{\textup{op}},$$ where the first inequality is because $\|\bm{y}^\top \bm{W}\|_2 \le \sqrt{\textup{Tr}(\bm{W}^2)} = 1$. Together these observations give that $\|\bm{V}\|_{2 \rightarrow 1} \le \sqrt{\frac{\pi}{2} k}$ as desired. 
	
	Finally, directly from  the constraints of $\CRo'$ we have that $\|\bm{V}\|_{\textup{op}} \leq 1$ and $\sum_{i = 1}^d \|\bm{\bm{V}}_{i, :}\|_2 \leq \sqrt{rk}$. Therefore, these imply that the scaled matrix $\frac{\bm{V}}{\sqrt{\frac{\pi}{2}}}$ belongs to $\CRo$. Since this holds for every $\bm{V} \in \CRo'$, this shows that $\CRo' \subseteq \sqrt{\frac{\pi}{2}} \cdot \CRo$ as desired. 
\end{proof}


\subsection{Second order representable relaxation $\mathcal{CR}2$} \label{sec:convex-relax2}

The formulation $\mathcal{CR}1'$ only contains linear, second-order, and semi-definite constraints, which are easier to handle computationally compared to the original $\ell_{2 \rightarrow 1}$-norm constraint in $\mathcal{CR}1$. However, as we discussed in Section~\ref{introduction} and we see in Section~\ref{sec:numerical-exp}, the semi-definite constraints still make this relaxation $\mathcal{CR}1'$ difficult to scale for large instances in practice.

Therefore, for practical purposes we consider the following further relaxation involving only second-order cone constraints:

\begin{align}
	\mathcal{CR}2 := \left\{ \bm{V} \in \mathbb{R}^{d \times r} \,\left |\, 
	\begin{array}{llll}
		& \|\bm{V}_{:,j}\|_2^2 \leq 1 & \forall j \in [r] \\
		& \|\bm{V}_{:, j_1} \pm \bm{V}_{:, j_2} \|_2^2 \leq 2 & \forall j_1 \neq j_2 \in [r] \\
		& \|\bm{V}_{:, j}\|_1 \leq \sqrt{k} & \forall j \in [r] \\
		& \sum_{i = 1}^d \|\bm{V}_{i,:}\|_2 \leq \sqrt{rk}
	\end{array}\right.
	\right\}. \label{eq:CR2}
\end{align} 
This set is a relaxation of $\CRo$ obtained by considering
\begin{itemize}
    \item the constraint $\max_{\bm{x} : \|\bm{x}\|_2 \leq 1} \|\bm{Vx}\|_2 = \|\bm{V}\|_{\text{op}} \le 1$ only for the vectors $\bm{x} = \bm{e}^j$ and $\bm{x} = \frac{1}{\sqrt{2}} (\bm{e}^{j_1} \pm \bm{e}^{j_2})$, 
    \item the constraint $\max_{\|\bm{x}\|_2 \le 1} \|\bm{Vx}\|_1 = \|\bm{V}\|_{2 \rightarrow 1} \le \sqrt{k}$ only for the vectors $\bm{x} = \bm{e}^j$. 
\end{itemize}
In particular, this shows that $\CRt$ is a relaxation of $\CRo$ and hence a relaxation of $\F$. For the remainder of the section, we also prove the approximation guarantee that $\mathcal{CR}2$ provides for the convex hull of $\mathcal{F}$, i.e., 
\begin{align*}
	\conv(\mathcal{F}) \subseteq \mathcal{CR}2 \subseteq \rho_{\mathcal{CR}2} \cdot \conv(\mathcal{F})
\end{align*}
with $\rho_{\mathcal{CR}2} \leq 2\sqrt{r}$, which proves Theorem \ref{thm:cr2}. 

We would like to point out that most of the constraints in $\CRt$ are purely intended to tighten the relaxation. Only the constraints  $\|\bm{V}_{:,j}\|_2^2 \leq 1,  \forall j \in [r] $ and $\sum_{i = 1}^d \|\bm{V}_{i,:}\|_2 \leq \sqrt{rk}$ are required to obtain the guarantee that $\rho_{\mathcal{CR}2} \leq 2\sqrt{r}$.

\begin{proof}
Since we argued above that $\CRt$ is a relaxation of $\F$ it suffices to show the second inclusion $\CRt \subseteq (2\sqrt{r})\,\conv(\F)$. So consider any $\bm{V} \in \mathcal{CR}2$, and we will show $\bm{V} \in (2\sqrt{r}) \conv(\F)$.
\paragraph{Decomposition of $\bm{V}$.} Since the sets $\F$ and $\CRt$ are symmetric to row permutations, assume without loss of generality that the rows of $\bm{V}$ are sorted in non-decreasing length, namely $\|\bm{V}_{1,:}\|_2 \ge \|\bm{V}_{2,:}\|_2 \ge \cdots \geq \|\bm{V}_{d,:}\|_2$. Decompose $\bm{V}$ based on its top-$k$ largest rows, second top-$k$ largest rows, and so on, i.e., let $m = \lceil d / k \rceil$, $\bm{V} = \bm{V}^1 + \cdots+  \bm{V}^{m}$ with $\bm{V}^p \in \R^{d \times r}$ and 
\begin{align*}
	& ~ \text{supp}(\bm{V}^1) = \{1, \ldots, k\} =: I^1~, \,~ \ldots\,~,~ \\
	& ~ \text{supp}( \bm{V}^m) = \{d - (m - 1)k + 1, \ldots, d \} =: I^m,
\end{align*}
where $|I^1| = \cdots = |I^{m - 1}| = k$ and $|I^m| \leq k$. For each $p = 1, \ldots, m$, consider the normalized matrix $\bm{V}^p / \|\bm{V}^p\|_{\text{op}}$; we have $\|\bm{V}^p / \|\bm{V}^p\|_{\text{op}}\|_0 \leq k$ and $\|\bm{V}^p / \|\bm{V}^p\|_{\text{op}}\|_{\text{op}} = 1$, thus $\bm{V}^p / \|\bm{V}^p\|_{\text{op}} \in \text{conv}(\mathcal{F})$. Therefore, $\bm{V}$ can be decomposed as follows:
	\begin{align}
		\bm{V} = \bm{V}^1 + \cdots \bm{V}^{m} = & ~ \|\bm{V}^1\|_{\text{op}} \frac{\bm{V}^1}{\|\bm{V}^1\|_{\text{op}}} + \cdots + \|\bm{V}^m\|_{\text{op}} \frac{\bm{V}^m}{\|\bm{V}^m\|_{\text{op}}} \nonumber \\ 
		\Rightarrow ~~ \frac{\bm{V}}{\sum_{p = 1}^m \|\bm{V}^p\|_{\text{op}}} = & ~ \left(\frac{\|\bm{V}^1\|_{\text{op}}}{\sum_{p = 1}^m \|\bm{V}^p\|_{\text{op}}} \right) \frac{\bm{V}^1}{\|\bm{V}^1\|_{\text{op}}} + \cdots \label{eq:avgV} \\
		& ~ + \left(\frac{\|\bm{V}^m\|_{\text{op}}}{\sum_{p = 1}^m \|\bm{V}^p\|_{\text{op}}}\right) \frac{\bm{V}^m}{\|\bm{V}^m\|_{\text{op}}} \in \text{conv}(\mathcal{F}).     \notag
 	\end{align}
\paragraph{Controlling the normalization term $\sum_{p = 1}^m \|\bm{V}^p\|_{\text{op}}$.} 
Since the norm $\|\cdot\|_{\textup{op}}$ is always at most $\|\cdot\|_{F}$, we have
 	\begin{align*}
 		\|\bm{V}^p\|_{\text{op}} ~\stackrel{(\star)}{\le}~ \sqrt{ \sum_{i \in I^p} \|\bm{V}_{i,:}\|_2^2 }  ~\le~ \sqrt{\sum_{j = 1}^r \|\bm{V}_{:,j}\|_2^2 } \leq \sqrt{r},
 	\end{align*}
where the last inequality follows from the constraint $\|\bm{V}_{:,j}\|_2^2 \leq 1$ present in the description of $\mathcal{CR}2$.
 	
Furthermore, we can bound the $\ell_2$-norm of each of the rows of $\bm{V}^p$ by the average of the rows of $\bm{V}^{p-1}$, since the rows of $\bm{V}$ are sorted in non-decreasing length. Employing these bounds we get
 	\begin{align}
 		\sum_{p = 1}^m \|\bm{V}^p\|_{\text{op}} = & ~ \|\bm{V}^1\|_{\text{op}} +  \sum_{p = 2}^m \|\bm{V}^p\|_{\text{op}} \notag\\
 		\leq & ~ \|\bm{V}^1\|_{\text{op}} + \sum_{p = 2}^m \sqrt{ \sum_{i \in I^p} \|\bm{V}_{i,:}\|_2^2 } & \textrm{(by $(\star)$)}\notag \\
 		\leq & ~ \sqrt{r} + \sum_{p = 2}^m \sqrt{ \left( \frac{ \sum_{i \in I^{p - 1}} \|\bm{V}_{i,:}\|_2 }{k} \right)^2 \cdot k } \notag\\
 		= & ~ \sqrt{r} + \frac{1}{\sqrt{k}} \cdot \sum_{p = 2}^m \sum_{i \in I^{p - 1}} \|\bm{V}_{i,:}\|_2  \notag\\
 		\leq & ~ \sqrt{r} + \frac{1}{\sqrt{k}} \sum_{i = 1}^d \|\bm{V}_{i,:}\|_2   
 		\leq  ~ 2\sqrt{r},   \label{eq:sqrtR}
 	\end{align}
 	where the second inequality holds since
 	\begin{align*}
 	    \frac{ \sum_{i' \in I^{p - 1}} \|\bm{V}_{i',:} \|_2 }{k} ~ \geq ~ \|\bm{V}_{i,:} \|_2~, ~~ \forall ~ i \in I^p ~~ \forall ~ p \in \{2, \ldots, m\} 
 	\end{align*}
 	by the non-decreasing sorting, and the last inequality holds since the constraint $\sum_{i = 1}^d \|\bm{V}_{i,:}\|_2 \leq \sqrt{rk}$ is in the description of $\mathcal{CR}2$. Combining inequalities \eqref{eq:avgV} and \eqref{eq:sqrtR} we have
 	\begin{align*}
 		\bm{V} \in \left( \sum_{p = 1}^m \|\bm{V}^p\|_{\text{op}} \right) \cdot \text{conv}(\mathcal{F}) \subseteq (2\sqrt{r}) \cdot \text{conv}(\mathcal{F}).
 	\end{align*}
	concluding the proof of the theorem. 
\end{proof}
}


\section{Convex IP formulation for obtaining dual bounds for \ref{eq:Multi-SPCA}} \label{sec:dual-bound}

Based on the results in Section~\ref{sec:convex-relax}, we can set-up the following optimization problem:
\begin{align*}
	\textup{opt}^{\mathcal{CR}i} := \max_{\bm{V} \in \mathcal{CR}i} \text{Tr} \left( \bm{V}^{\top} \bm{A} \bm{V} \right).  \tag{CRi-Relax} \label{eq:SOC-relax}
\end{align*}
The following is a straightforward corollary of Theorem~\ref{thm:cr1}, Theorem~\ref{thm:cr1'} and Theorem~\ref{thm:cr2}: 
\begin{corollary}\label{coro:cr2}
$\textup{opt}^{\mathcal{F}} \leq \textup{opt}^{\mathcal{CR}i} \leq \rho_{\mathcal{CR}i}^2 \cdot \textup{opt}^{\mathcal{F}}$ for $i \in \{1, 1',2\}$.
\end{corollary}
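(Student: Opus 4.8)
The plan is to derive the corollary directly from the sandwiching inclusions $\mathcal{F} \subseteq \mathcal{CR}i \subseteq \rho_{\mathcal{CR}i} \cdot \conv(\mathcal{F})$ established in Theorems~\ref{thm:cr1} and~\ref{thm:cr2}, combined with two elementary structural properties of the objective. Writing $g(\bm{V}) := \text{Tr}(\bm{V}^{\top} \bm{A} \bm{V})$ for brevity, the first property is that $g$ is \emph{convex} in $\bm{V}$: since $\bm{A} = \frac{1}{M}\bm{X}\bm{X}^{\top} \succeq \bm{0}$, we can write $g(\bm{V}) = \sum_{j=1}^r \bm{V}_{\star,j}^{\top} \bm{A}\, \bm{V}_{\star,j}$ as a sum of convex quadratic forms. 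The second property is that $g$ is \emph{$2$-homogeneous}, i.e.\ $g(\rho \bm{V}) = \rho^2 g(\bm{V})$ for every scalar $\rho$.

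The first inequality $\textup{opt}^{\mathcal{F}} \le \textup{opt}^{\mathcal{CR}i}$ is immediate, since $\mathcal{F} \subseteq \mathcal{CR}i$ means we maximize the same objective over a larger feasible region. For the second inequality, the key step is to observe that maximizing the convex function $g$ over $\conv(\mathcal{F})$ returns the same value as maximizing it over $\mathcal{F}$. Indeed, $\mathcal{F}$ is compact, so $\conv(\mathcal{F})$ is compact and the maximum of the convex function $g$ over it is attained at an extreme point; since every extreme point of the convex hull of a compact set lies in that set, such a point belongs to $\mathcal{F}$, giving $\max_{\bm{W} \in \conv(\mathcal{F})} g(\bm{W}) = \textup{opt}^{\mathcal{F}}$.

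With this in hand, I would take any $\bm{V} \in \mathcal{CR}i$ and use $\mathcal{CR}i \subseteq \rho_{\mathcal{CR}i} \cdot \conv(\mathcal{F})$ to write $\bm{V} = \rho_{\mathcal{CR}i}\, \bm{W}$ with $\bm{W} \in \conv(\mathcal{F})$. Then $2$-homogeneity yields
\begin{align*}
g(\bm{V}) = \rho_{\mathcal{CR}i}^2\, g(\bm{W}) \,\le\, \rho_{\mathcal{CR}i}^2 \max_{\bm{W}' \in \conv(\mathcal{F})} g(\bm{W}') = \rho_{\mathcal{CR}i}^2\, \textup{opt}^{\mathcal{F}},
\end{align*}
and taking the maximum over $\bm{V} \in \mathcal{CR}i$ gives $\textup{opt}^{\mathcal{CR}i} \le \rho_{\mathcal{CR}i}^2\, \textup{opt}^{\mathcal{F}}$, as desired.

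Since each step is a one-line consequence of a standard fact, I do not anticipate a genuine obstacle. The only point needing care is the passage $\max_{\conv(\mathcal{F})} g = \textup{opt}^{\mathcal{F}}$, which relies essentially on $g$ being convex (so that the PSD assumption $\bm{A} \succeq \bm{0}$ is used) together with the Krein--Milman-type fact that the extreme points of $\conv(\mathcal{F})$ lie inside $\mathcal{F}$; the rest is just the scaling bookkeeping afforded by $2$-homogeneity.
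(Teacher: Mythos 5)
Your proof is correct and is precisely the ``straightforward'' argument the paper has in mind when it states the corollary without proof: the lower bound from $\mathcal{F} \subseteq \mathcal{CR}i$, and the upper bound from the second inclusion combined with $2$-homogeneity of $\text{Tr}(\bm{V}^{\top}\bm{A}\bm{V})$ and the fact (already invoked in the paper's introduction) that a convex function attains its maximum over $\conv(\mathcal{F})$ at an extreme point lying in $\mathcal{F}$. Nothing is missing.
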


The challenge of solving~\ref{eq:SOC-relax} is that the objective function is non-concave. Indeed, for the case $r = 1$, Corollary~\ref{coro:cr2} provides constant multiplicative approximation ratios to \ref{eq:Multi-SPCA}; thus, the  inapproximability results for \ref{eq:Multi-SPCA} with $r = 1$ from \cite{chan2016approximability,magdon2017np} imply that solving~\ref{eq:SOC-relax} to optimality is NP-hard. Therefore, we construct a further concave relaxation of the objective function.

\subsection{Piecewise linear upper approximation of objective function}\label{sec:plwconstruction}
Let $\bm{A} = \sum_{j = 1}^d \lambda_j \bm{a}_j \bm{a}_j^{\top}$ be the eigenvalue decomposition of sample covariance matrix $\bm{A}$ with $\lambda_1 \geq \cdots \geq \lambda_d \geq 0$. The objective function then can be represented as a summation 
\begin{align*}
	\text{Tr} \left( \bm{V}^{\top} \bm{A} \bm{V} \right) =  \sum_{j = 1}^d \lambda_j \sum_{i = 1}^r (\bm{a}_j^{\top} \bm{v}_i)^2,
\end{align*}where $\bm{v}_i ~ (= \bm{V}_{:,i}) \in \mathbb{R}^d$ denotes the $i$th column of $\bm{V}$ such that $\bm{V} = \left( \bm{v}_1 ~|~ \ldots ~|~ \bm{v}_r \right)$. Define the auxiliary variables $g_{ji} = \bm{a}_j^{\top} \bm{v}_i$ for $(j,i) \in [r] \times [d]$. Let $j_1,\ldots,j_d$ be the indices of the coordinates of $\bm{a}_j$ sorted in non-decreasing absolute value, namely 
\begin{align*}
	|[\bm{a}_j]_{j_1}| \geq \ldots \geq |[\bm{a}_j]_{j_k}| \geq \ldots \geq |[\bm{a}_j]_{j_d}|,
\end{align*}
and let 
\begin{align}
	\theta_{j} = \sqrt{ [\bm{a}_j]_{j_1}^2 + \cdots + [\bm{a}_j]_{j_k}^2} \label{eq:theta-j}
\end{align}
be the $\ell_2$-norm of the top-$k$ largest absolute entries of $\bm{a}_j$. Since $\bm{v}_i$ is supposed to be $k$-sparse with $2-$norm being at most $1$, it is easy to observe that $g_{ji}$ is within the interval $[-\theta_{j}, \theta_{j}]$. 

\paragraph{Piecewise linear approximation:} To relax the non-convex objective, we can upper approximate each quadratic term $g_{ji}^2$ by a piecewise linear function based on a new auxiliary variable $\xi_{ji}$ via \textit{special ordered sets type 2} (SOS-II)~\cite{wolsey1999integer} constraints (PLA) as follows,
\begin{align*}
	\text{PLA}([d] \times [r]) := \left\{ (g, \xi, \eta) \, \left |
	\begin{array}{llll}
		g_{ji} = \bm{a}_j^{\top} \bm{v}_i & (j,i) \in [d] \times [r] \\
		g_{ji} = \sum_{\ell = -N}^N \gamma_{ji}^{\ell} \eta_{ji}^{\ell} & (j,i) \in [d] \times [r] \\
		\xi_{ji} = \sum_{\ell = -N}^N \left( \gamma_{ji}^{\ell} \right)^2 \eta_{ji}^{\ell} & (j,i) \in [d] \times [r] \\
		\left( \eta_{ji}^{\ell} \right)_{\ell = -N}^N \in \text{SOS-II} & (j,i) \in [d] \times [r] \\
		|g_{ji}| \le \theta_j & (j,i) \in [d] \times [r] 
	\end{array} \right. ,
	\right\}
\end{align*}
where for each $(j,i) \in [d] \times [r]$, $\left( \eta_{ji}^{\ell} \right)_{\ell = -N}^N$ is the set of SOS-II variables, and $\left( \gamma_{ji}^{\ell} \right)_{\ell = -N}^N$ is the corresponding set of splitting points that satisfy: 
\begin{align*}
\begin{small}
	\underbrace{\gamma_{ji}^{-N}}_{= - \theta_j} \leq \cdots \leq \underbrace{\gamma_{ji}^{0}}_{= 0} \leq \cdot \cdots \leq \underbrace{\gamma_{ji}^{N}}_{= \theta_j}
\end{small}
\end{align*}
and split the region $[-\theta_{j}, \theta_{j}]$ into $2N$ equal intervals. See Figure~\ref{fig:SOS_II} for an example. 
\begin{figure}[ht]
\centering
\includegraphics[width=0.75\textwidth,trim={0 0.5cm 0 0},clip]{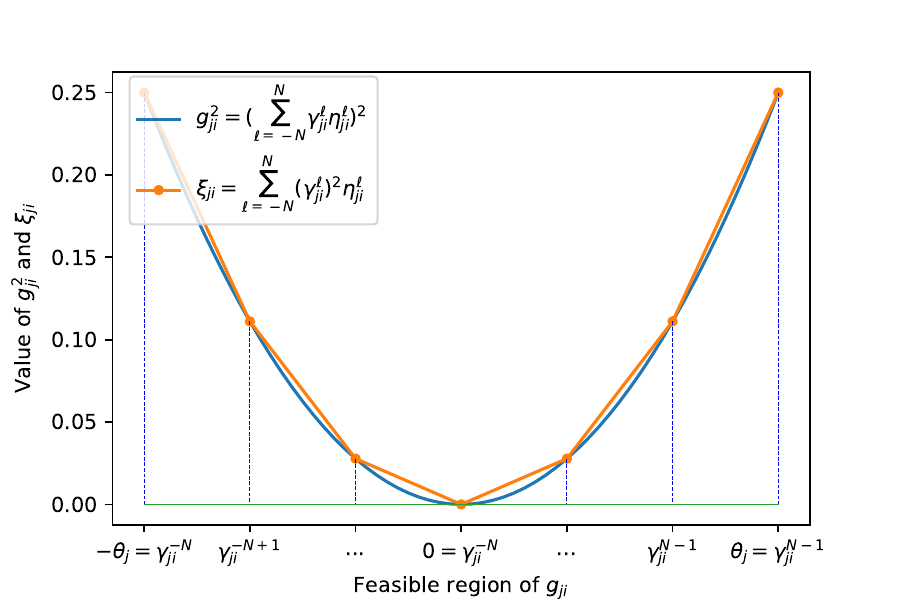}
\caption{The quadratic function $g_{ji}^2$ is upper approximated by a piecewise linear function $\xi_{ji}$ using SOS-II constraints for all $(j,i) \in [d] \times [r]$.}
\label{fig:SOS_II}

\end{figure} 
By using PLA, we  arrive at the following \emph{convex integer programming} problem,
\begin{align*}
	\begin{array}{rllll}
		\text{ub}^{\mathcal{CR}i} := \max & \sum_{j = 1}^d \lambda_j  \sum_{i = 1}^r \xi_{ji} \\
		\text{s.t.} & \bm{V} \in \mathcal{CR}i \\ 
		& (g, \xi, \eta) \in \text{PLA}([d] \times [r])
	\end{array} \tag{CIP} \label{eq:CIP} 
\end{align*}
where $\mathcal{CR}i$ is the convex set defined in Section~\ref{sec:convex-relax1} or Section~\ref{sec:convex-relax2} for $i \in \{1, 1', 2\}$ respectively, and $\text{PLA}$ is the set of constraints for piecewise-linear upper approximation of objective. Note that we say this is a convex integer program since SOS-II is modeled using binary variables. 


\subsection{Guarantees on the upper bounds from the convex integer program} \label{sec:ub-PLA}

Here we present the worst-case guarantee on the upper bound from solving convex integer program in the form of an affine function of $\text{opt}^{\mathcal{F}}$. The following theorem is a more precise restatement of Theorem \ref{thm:CIP} from the introduction. 

\begin{manualtheorem}{\ref{thm:CIP} (restated)}
For every positive integers $d, k, r, N$ such that $1 \leq r \leq k \leq d$, let $\bm{A} \in \mathbb{R}^{d \times d}$ be the sample covariance matrix. Let $\textup{opt}^{\mathcal{F}} = \textup{opt}^{\mathcal{F}}(\bm{A})$ be the optimal value of \ref{eq:Multi-SPCA}. Let $\textup{ub}^{\mathcal{CR}i} = \textup{ub}^{\mathcal{CR}i}(\bm{A})$ be the upper bound obtained from solving the \eqref{eq:CIP} using $\mathcal{CR}i$ convex relaxation for $i \in \{1, 1', 2\}$ with the $\textup{PLA}$ piecewise linear approximation set. Then
\begin{align*}
	\textup{opt}^{\mathcal{F}}(\bm{A}) \leq \textup{ub}^{\mathcal{CR}i} \leq \rho_{\mathcal{CR}i}^2 \cdot \textup{opt}^{\mathcal{F}}(\bm{A})+ \underbrace{\sum_{j = 1}^d \frac{r \theta_j^2 \lambda_j(\bm{A})}{4 N^2}}_{\emph{additive-term}, \textup{add}(\bm{A})}, \ \quad \textup{ for } i \in \{1, 1', 2\}, 
\end{align*}
where for every $j \in [d]$, $\lambda_j(\bm{A})$ is the $j$th eigenvalue of the sample covariance matrix $\bm{A}$, and $\theta_j$ is defined in \eqref{eq:theta-j}. 
\end{manualtheorem}

\begin{proof}
Based on the construction for \ref{eq:CIP}, the objective function $\text{Tr}\left( \bm{V}^{\top} \bm{A} \bm{V} \right)$ satisfies
\begin{align*}
	\sum_{j = 1}^d \lambda_j(\bm{A}) \sum_{i = 1}^r (\bm{a}_j^{\top} \bm{v}_i)^2 = \sum_{j = 1}^d \lambda_j(\bm{A}) \sum_{i = 1}^r g_{ji}^2. 
\end{align*} 
By Corollary~\ref{coro:cr2}, we have
\begin{align*}
	\max_{\bm{V} \in \mathcal{CR}i} \left( \bm{V}^{\top} \bm{A} \bm{V} \right) = & ~ \max_{\substack{\bm{V} \in \mathcal{CR}i}} \sum_{j = 1}^d \lambda_j(\bm{A}) \sum_{i = 1}^r g_{ji}^2 \leq \rho_{\mathcal{CR}i}^2 \cdot \text{opt}^{\mathcal{F}},
\end{align*}
for $i \in \{1,1',2\}$. Note that $g_{ji} \in [- \theta_j, \theta_j]$  and we have split the interval $[- \theta_j, \theta_j]$ evenly via splitting points $(\gamma_{ji}^{\ell})_{\ell = -N}^N$ such that $\gamma_{ji}^{\ell} = \frac{\ell}{N} \cdot \theta_j$. For a given $j \in [d]$ and $i \in [r]$, by the definition of SOS-II sets, let $g_{ij} =  \gamma_{ji}^{\ell^{\ast}} \eta_{j,i}^{\ell^{\ast}} + \gamma_{ji}^{\ell^{\ast} + 1} \eta_{j,i}^{\ell^{\ast} + 1} $, $\xi_{ji}  = (\gamma_{ji}^{\ell^{\ast}})^2 \eta_{j,i}^{\ell^{\ast}} + (\gamma_{ji}^{\ell^{\ast} + 1})^2 \eta_{j,i}^{\ell^{\ast} + 1}$ and $ \eta_{j,i}^{\ell^{\ast}} + \eta_{j,i}^{\ell^{\ast} + 1} = 1$ for some $\ell^{\ast} \in \{ -N, \dots, N -1\}$. Thus  we have:  
\begin{align*}
\xi_{ji} - g_{ji}^2 = & ~ \left( (\gamma_{ji}^{\ell^{\ast}})^2 \eta_{j,i}^{\ell^{\ast}} + (\gamma_{ji}^{\ell^{\ast} + 1})^2 \eta_{j,i}^{\ell^{\ast} + 1} \right) - \left( \gamma_{ji}^{\ell^{\ast}} \eta_{j,i}^{\ell^{\ast}} + \gamma_{ji}^{\ell^{\ast} + 1} \eta_{j,i}^{\ell^{\ast} + 1}  \right)^2 \\
= & ~ (\gamma_{ji}^{\ell^{\ast}})^2 \eta_{j,i}^{\ell^{\ast}} + (\gamma_{ji}^{\ell^{\ast} + 1})^2 \eta_{j,i}^{\ell^{\ast} + 1} - (\gamma_{ji}^{\ell^{\ast}})^2 (\eta_{j,i}^{\ell^{\ast}})^2 - (\gamma_{ji}^{\ell^{\ast} + 1})^2 (\eta_{j,i}^{\ell^{\ast} + 1})^2  \\
&- 2 \gamma_{ji}^{\ell^{\ast}} \eta_{j,i}^{\ell^{\ast}} \gamma_{ji}^{\ell^{\ast} + 1} \eta_{j,i}^{\ell^{\ast} + 1} \\
= & ~ \left( \gamma_{ji}^{\ell^{\ast} + 1} - \gamma_{ji}^{\ell^{\ast}} \right)^2 \eta_{ji}^{\ell^{\ast}} \eta_{ji}^{\ell^{\ast} + 1} 
=  ~ \frac{\theta_j^2}{N^2} \eta_{ji}^{\ell^{\ast}} \eta_{ji}^{\ell^{\ast} + 1} \leq \frac{\theta_j^2}{4 N^2}.
\end{align*}
Therefore, the objective function in \ref{eq:CIP} satisfies
\begin{align*} 
	\sum_{j = 1}^d \lambda_j(\bm{A}) \sum_{i = 1}^r \xi_{ji} \leq & ~ \sum_{j = 1}^d \lambda_j(\bm{A}) \sum_{i = 1}^r g_{ji}^2 + \sum_{j = 1}^d \frac{r \theta_j^2 \lambda_j(\bm{A})}{4 N^2} \\
	\leq & ~ \rho_{\mathcal{CR}i}^2 \cdot \text{opt}^{\mathcal{F}}(\bm{A}) + \sum_{j = 1}^d \frac{r \theta_j^2\lambda_j(\bm{A})}{4 N^2},
\end{align*}
which completes the proof. 
\end{proof}

Note that since $|\theta_j| \leq 1$ ($\theta_j$ is the two-norm of a sub-vector of a unit vector), we have that 
$$\underbrace{\sum_{j = 1}^d \frac{r \theta_j^2 \lambda_j(\bm{A})}{4 N^2}}_{\text{additive-term}, = \text{add}(\bm{A})} \leq \sum_{j = 1}^d \frac{r \lambda_j(\bm{A}) }{4 N^2} = \textup{Tr}(\bm{A})\cdot \frac{r}{4N^2}.$$

\section{{Greedy heuristic for} \ref{eq:Multi-SPCA}} \label{sec:primal-bound}
In order to evaluate the dual bounds produced by the convex integer program from the previous section, we also need good feasible solutions for \ref{eq:Multi-SPCA}. As mentioned in the introduction, we are not aware of any heuristics for the general case $r > 1$, so in this section, we describe the optimized version of the natural greedy heuristic that we will use. 	
	
We can view \ref{eq:Multi-SPCA} as the problem
\begin{align*}
\max_{S \subseteq [d], ~ |S|= k} ~~ & ~~ f(S) ,
\end{align*}
where
\begin{align*}
f(S):= \left( \max_{\bm{V} \in \mathbb{R}^{d\times r} \,|\, \bm{V}^{\top}\bm{V} = \bm{I}^r, ~ \textup{supp}(\bm{V}) = S} \text{Tr} \left( \bm{V}^{\top} \bm{A} \bm{V} \right) \right), \end{align*}
and hence solving \ref{eq:Multi-SPCA} reduces to selecting the correct support set $S$. Thus, a natural algorithm is the \emph{1-neighborhood} local search that starts with a support set $S$ and removes/adds one index to improve the value $f(S)$.\footnote{Another idea we explore is to find a principal submatrix whose determinant is near-maximal using a greedy algorithm, see Algorithm~\ref{algo:GH-det} in Appendix~\ref{app:PH-det}. More on this in Section \ref{sec:num-lb-method}.} The main issue with this strategy is that it requires an expensive eigendecomposition computation for each candidate pair $i$/$j$ of indices to be removed/added to evaluate the function $f$. Here we propose a much more efficient strategy that solves a proxy version of this local search move that requires only one eigen-decomposition per round. 

For that we rewrite the problem as follows. Given a sample covariance matrix $\bm{A}$, let $\bm{A}^{1/2}$ be its positive semi-definite square root such that $\bm{A} = \bm{A}^{1/2} \bm{A}^{1/2}$. Observe that $ \|\bm{A}^{\frac{1}{2}} - \bm{V}\bm{V}^{\top}\bm{A}^{\frac{1}{2}}\|_F^2 = \text{Tr}(\bm{A}) - \text{Tr}(\bm{V}^{\top}\bm{A}\bm{V}),$
and therefore we may equivalently solve the following problem:
\begin{align*}
	\begin{array}{rlllll}
		\min_{\bm{V} \in \mathbb{R}^{d \times r}} & \left\| \bm{A}^{1/2} - \bm{V} \bm{V}^{\top} \bm{A}^{1/2} \right\|_F^2 & \text{s.t.} & \bm{V}^{\top} \bm{V} = \bm{I}^r, ~ \|\bm{V}\|_0 \leq k. 
	\end{array} \tag{SPCA-alt} \label{eq:SPCA-lasso}
\end{align*}	
	Moreover, \ref{eq:SPCA-lasso} can be reformulated into a \textit{two-stage (inner \& outer) optimization problem}:
\begin{align*}
\begin{array}{rrllll}
	\min_{S \subseteq [d], ~ |S| \leq k} & \min_{\bm{V}_S} & ~ \bar{f}(S, \bm{V}_S) & \text{s.t. } & ~ \bm{V}_S^{\top}\bm{V}_S  = \bm{I}^r
\end{array}
\end{align*}
where
\begin{eqnarray}\label{eq:fdefn}
\bar{f}(S, \black{\bm{M}}) := \| (\bm{A}^{1/2})_S - \black{\bm{M} \bm{M}^{\top}} (\bm{A}^{1/2})_S \|_F^2 + \|(\bm{A}^{1/2})_{S^C} \|_F^2
\end{eqnarray}
and $S^C := [d] \setminus S$.

In order to find a solution with small $\bar{f}(S, \bm{V}_S)$ again we use a greedy swap heuristic that removes/adds one index to $S$. However, we avoid eigenvalue computations by keeping $\M = \V_S$ fixed and finding an improved set $S'$ (i.e., with $\bar{f}(S',\M) \le \bar{f}(S,\M)$), and only then updating the term $\M$; only the second only needs 1 eigendecomposition of $\bm{A}_{S_t,S_t}$. We describe this in more detail, letting $S_t$ and $\V^t_{S_t}$ be the iterates at round $t$.
		
\paragraph{Leaving Candidate:}  In the $t$-th iteration, given the iterates $S_{t-1}$ and $\bm{V}^{t-1}_{S_{t-1}}$ from the previous iteration, for each index $j \in S_{t - 1}$, let $\Delta^{\oout}_j$ be 
\begin{align*}
\Delta^{\oout}_j := \| \bm{A}^{1/2}_j\|_2^2 - \left\| \bm{A}^{1/2}_{S_{t - 1}}  - \bm{V}_{S_{t - 1}} \bm{V}_{S_{t - 1}}^{\top} \bm{A}^{1/2}_{S_{t - 1}} \right\|_F^2.
\end{align*}
Then let $j^{\text{out}} := \argmin_{j \in S_{t - 1}} \Delta^{\oout}_j$ be the candidate to leave the set $S_{t - 1}$. 

\paragraph{Entering Candidate:} Similarly, for each $j \in S_{t - 1}^C$ define $\Delta^{\iin}_j$ as 
\begin{align*}
	\Delta^{\iin}_j := \| \bm{A}^{1/2}_j\|_2^2 - \left\| (\bm{A}^{1/2})_{S_{t - 1}^j} -  \bm{V}_{S_{t - 1}} \bm{V}_{S_{t - 1}}^{\top} (\bm{A}^{1/2})_{S_{t - 1}^j} \right\|_F^2,
\end{align*}
where $S_{t - 1}^j := S_{t - 1} - \{j^{\text{out}}\} + \{j\}$. 
Then let $j^{\text{in}} := \argmax_{j \in S_{t - 1}^C} \Delta^{\iin}_j$.

\paragraph{Update Rule:} If $\Delta^{\oout}_{j^{\oout}} <  \Delta^{\iin}_{j^{\iin}}$ we perform the exchange with the candidates above, namely set $S_t = S_{t-1} - \{j^{\oout}\} + \{j^{\iin}\}$. In addition, we set $\bm{V}^t_{S_t}$ to be the minimizer of $\min\{f(S_t, \bm{M}) : \bm{M}^\top \bm{M} = \bm{I}^r\}$; for that we compute the eigendecomposition $\bm{A}_{S_t,S_t} = \bm{U}_{S_t} \bm{\Lambda}_{S_t} \bm{U}_{S_t}^\top$ of $\bm{A}_{S_t,S_t}$ and set $\bm{V}^t_{S_t} = (\bm{U}_{S_t})_{\star, [r]}$ to be the eigenvectors corresponding to top $r$ eigenvalues. 

If $\Delta^{\oout}_{j^{\oout}} \ge \Delta^{\iin}_{j^{\iin}}$ the algorithm stops and return the matrix $\bm{V}$ where in rows $S_{t-1}$ equals $\bm{V}^{t-1}_{S_{t-1}}$ (i.e., $\bm{V}_{S_{t-1}} = \bm{V}^{t-1}_{S_{t-1}}$) and in rows $S^C_{t-1}$ equals zero. The complete pseudocode is presented {in Appendix \ref{app:pseudo}}. 

We observe that even though our procedure works only with a proxy of the original function $f$ of the natural greedy heuristic, by construction it still finds support sets $S$ that monotonically decrease this objective function {(see Appendix \ref{app:monoProof} for a proof)}. 

\begin{lemma} \label{lemma:primal}
Algorithm~\ref{algo:LSM} is a monotonically decreasing algorithm with respect to the objective function {$f$, namely $f(S_t) < f(S_{t-1})$ for every iteration $t$.} 
\end{lemma}


\section{Computational experiments} \label{sec:numerical-exp}


In this section, we conduct computational experiments on fairly large instances to illustrate the efficiency of our proposed methods and assess their quality in finding good primal solutions and in proving good dual bounds. 

\subsection{Methods for comparison}

\subsubsection{Methods for dual bounds} 

	
In order to generate dual bounds, we implemented a version of our convex integer programming formulation~\eqref{eq:CIP}. Moreover, we add several enhancements to the proposed \eqref{eq:CIP} like reduction of the number of SOS-II constraints and cutting planes in order to improve its efficiency (see~\cite{dey2018convex} for related ideas for the case of $r =1$). This implemented version is called \ref{eq:CIP-impl}, and is described in detail in  Appendix~\ref{app:reduce-running-time}. For all experiments we use $N =40$ as the level of discretization for the objective function in \ref{eq:CIP-impl}. (For large instances, we additionally use a dimension reduction technique, which we discuss later.)

We compare our proposed dual bound with the following two baselines:
\begin{itemize} \itemsep6pt
	\item \textbf{Baseline 1:} Sum of the diagonal entries of the ``best'' sub-matrix:
		\begin{align*}
			\text{Baseline1} := & \bm{A}_{j_1, j_1} + \cdots + \bm{A}_{j_k, j_k},
		\end{align*}
where $j_1, \ldots, j_d$ is the permutation of the indices that makes the diagonal of $\bm{A}$ sorted in non-increasing order, namely $\bm{A}_{j_1, j_1} \ge \bm{A}_{j_2, j_2} \geq \cdots \ge \bm{A}_{j_d, j_d}$. Note that the sum of $\bm{A}_{j_1, j_1}, \ldots, \bm{A}_{j_k, j_k}$ is equal to the sum of the eigenvalues of the sub-matrix indexed by $\{j_1, \ldots, j_k\}$ in $\bm{A}$, then Baseline-1 can be viewed as an upper bound for the optimal value of \ref{eq:Multi-SPCA}. Moreover, Baseline-1 is tight when we have $r = k$.
		
	
	\item \textbf{Baseline 2:} To obtain a semi-definite programming relaxation, we go to the lifted space where we define variable $\bm{P}:= \bm{V} \bm{V}^{\top}$.  Note that it is easy to verify that if $\bm{V} \in \mathcal{F}$, then $\|\bm{P}\|_{1} \leq rk, ~ \text{Tr}(\bm{P}) = r$. Moreover, all the constraints defining $\mathcal{CR}1'$ can be naturally written in lifted space except for the constraints $\sum_{i=1}^d\| \bm{V}_{i,:}\|_2 \leq \sqrt{rk} $. Thus, we obtain the following semi-definite programming relaxation: 
	\begin{align*}
        \begin{array}{rlll}
            \text{SDP} := \max_{\bm{P}, \bm{h}} & \text{Tr}(\bm{P} \bm{A})  \\
            \text{s.t.} & \|\bm{P}\|_{1} \leq rk, ~ \text{Tr}(\bm{P}) = r, ~ \bm{I}^{d} \succeq \bm{P} \succeq \bm{0}^{d, d} \\
            & ~ \sum_{i =1}^d \bm{h}_i \leq \frac{\pi}{2} k, \\
            & ~  ~ \textup{diag}(\bm{h}) - \bm{P} \succeq \bm{0}^{d, d},
        \end{array},  
    \end{align*} 
    which outputs a baseline upper bound for the \ref{eq:Multi-SPCA} problem.
\end{itemize}

\subsubsection{Parameters for the primal algorithm (lower bounds)} \label{sec:num-lb-method}

To obtain good feasible solutions, we implemented the modified greedy neighborhood search (Algorithm~\ref{algo:LSM}) proposed in Section~\ref{sec:primal-bound}. For each instance, we run this algorithm $400$ times, where each time, we pick the initial support set $S_0$ as a uniformly random subset of $[d]$ of size $k$. We allow a maximum of $d$ iterations. The objective function value corresponding to the best solution from the 400 runs is declared as the lower bound. 
	
We have also compared this modified greedy neighborhood search (Algorithm~\ref{algo:LSM}) with a greedy algorithm that tries to maximize the determinant of the $k \times k$ submatrix. The details of this algorithm (Algorithm~\ref{algo:GH-det}) is presented in Appendix \ref{app:PH-det}. Based on the numerical results reported in Table~\ref{tab:artificial-instances-primal} and Table~\ref{tab:real-instances-primal}, the proposed greedy neighborhood search Algorithm~\ref{algo:LSM} outperforms the Algorithm~\ref{algo:GH-det} in all instances. 

\subsection{Instances for numerical experiments} \label{sec:instances}

We conducted numerical experiments on two types of instances. 

\subsubsection{Artificial instances}

These instances were generated artificially using ideas similar to that of the \textit{spiked covariance matrix}~\cite{deshpande2016sparse} that have been used often to test algorithms in the $r=1$ case. An instance \textbf{Artificial-$k^A$} is generated as follows.

We first choose a sparsity parameter $k^{\text{A}} \le \frac{d}{2}$ (which will be in the range $[30]$) and the  orthonormal vectors $\bm{u}_1$ and $\bm{u}_2$ of dimension $k^A$ given by 
\begin{align*}
	& ~ \bm{u}_1^{\top} = \bigg(  \frac{1}{\sqrt{k^{\text{A}}}}, \ldots, \frac{1}{\sqrt{k^{\text{A}}}} \bigg), & ~ \bm{u}_2^{\top} = \bigg( \frac{1}{\sqrt{k^{\text{A}}}}, - \frac{1}{\sqrt{k^{\text{A}}}}, \ldots, \frac{1}{\sqrt{k^{\text{A}}}}, - \frac{1}{\sqrt{k^{\text{A}}}}, \bigg).
\end{align*}
	The \emph{block spiked covariance matrix} $\bm{\Sigma} \in \R^{d \times d}$ is then computed as 
	\begin{align*}
	\bm{\Sigma} := \bm{\Sigma}_1 \oplus \bm{\Sigma}_2 \oplus \bm{I}^{d - 2k^{\text{A}}},
\end{align*} 
where 
$	\bm{\Sigma}_1 :=  ~ 55 \bm{u}_1 \bm{u}_1^{\top} + 52  \bm{u}_2 \bm{u}_2^{\top} \in \mathbb{R}^{k^{\text{A}} \times k^{\text{A}}}, \bm{\Sigma}_2 :=  ~ 50 \bm{I}_{k^{\text{A}}} \in \mathbb{R}^{k^{\text{A}} \times k^{\text{A}}}. $
Finally, we sample $M$ i.i.d. random vectors $\bm{x}_1, \ldots, \bm{x}_M \sim N(\bm{0}_d, \bm{\Sigma})$ from the normal distribution with covariance matrix $\bm{\Sigma}$ and create the instance $\bm{A}$ as the sample covariance matrix of these vectors:
\begin{align*}
	\bm{A} := \frac{1}{M} \left( \bm{x}_1 \bm{x}_1^{\top} + \cdots + \bm{x}_M \bm{x}_M^{\top}\right).
\end{align*}

In our experiments we used $d = 500$ (thus generating $500 \times 500$ matrices) and $M = 3000$ samples. Our experiments will focus on the cases $r=2$ and $r=3$ and we note that in these instances the optimal support set with cardinality $k^{\text{A}}$ is different for both choices of $r$.

\subsubsection{Real instances}
The second type of instances are four real instances using the colon cancer dataset (CovColon) from~\cite{alon1999broad}, the lymphoma dataset (Lymph) from~\cite{alizadeh2000distinct}, and Reddit instances Reddit1500 and Reddit2000 from~\cite{dey2018convex}. Table \ref{tab:real-instances} presents the size of each instance.

\begin{table}[ht]
    \centering
    \begin{tabular}{cccccccccc} 
    \toprule
        name & CovColon & Lymph & Reddit1500 & Reddit2000 \\ \midrule
        size & $500\times 500$ & $500\times 500$ & $1500 \times 1500$ & $2000 \times 2000$ \\
    \bottomrule
    \end{tabular} 
    \caption{Real instances}
    \label{tab:real-instances}
\end{table}
 
\subsection{Software \& hardware}
All numerical experiments are implemented on MacBookPro13 with a 2GHz Intel Core i5 CPU and 8GB 1867MHz LPDDR3 Memory. The (\ref{eq:CIP-impl}) model was solved using Gurobi 7.0.2. The Baseline-2 SDP relaxation was solved using Mosek version 9.1 with CVX in Matlab R2021a. 

\subsection{Performance measure} 
We measure the performances of \ref{eq:CIP-impl} and the baselines based on the primal-dual gap, defined as 
\begin{align*}
	\text{Gap} := \frac{\text{ub} - \text{lb}}{\text{lb}}. 
\end{align*}
Here $\text{ub} \in \{\text{ub}^{\text{impl}} ~ (\text{ub}^{\text{sub-mat}} \text{ in Section~\ref{sec:num-sub-matrix}}), \text{Baseline-1}, \text{Baseline-2}\}$ denotes the dual bound obtained from \ref{eq:CIP-impl} or baselines. The term $\text{lb}$ denotes the primal bound from the primal heuristic. 

\subsection{Numerical results for smaller instances} \label{sec:num-small-instance}
First, we perform experiments on smaller instances of size $100 \times 100$. These instances were constructed by picking the submatrix corresponding to the top 100 largest diagonal entries from each instance listed in Section~\ref{sec:instances}. We append a ``prime'' in the name of the instances to denote these smaller instances, e.g., Artificial-$k^A$' and CovColon'.

\paragraph{Time limits.} We set the time limit for \ref{eq:CIP-impl} to $60$ seconds and imposed no time limit on SDP. (We note that SDP terminated within 600 seconds on these smaller instances.) We also did not impose a time limit on the primal heuristic, and just noted that it took less than 120 seconds on all smaller instances.

The gaps obtained by the dual bounds using \ref{eq:CIP-impl}, Baseline1, SDP (Baseline 2), on these instances are presented in  Tables~\ref{tab:small-artificial-instances-results} and~\ref{tab:small-real-instances-results}. 

\begin{table}[ht]
    \centering
    \begin{tabular}{|c|c||c|c|c||c|c|c|c|c|c|c|} \nthickrule
name   & param $(r, k)$: & $(2, 10)$ & $(2, 20)$ & $(2, 30)$ &   $(3, 10)$ & $(3, 20)$ & $(3, 30)$   \\ \nthickrule
        Artificial-10' & CIP-impl & 0.031 & \bf 0.0004 & \bf 0.0003 & 0.04 & \bf 0.0005 & \bf 0.0004  \\
        $100 \times 100$& Baseline1 & 3.523 & 4.309 & 4.403 &   2.108 & 2.625 & 2.689  \\
        & SDP & \bf 0.029 & 0.0005 & \bf 0.0003 & \bf 0.03 & \bf 0.0005 & \bf 0.0004 \\
         \specialrule{.1em}{.05em}{.05em} 
        
        Artificial-20' & CIP-impl & 0.027 & \bf 0.011 & \bf 0.007 &   \bf 0.026 & \bf 0.011 & \bf 0.006  \\
        $100 \times 100$& Baseline1 & 3.58 & 7.838 & 8.251 &   2.094 & 4.942 & 5.216   \\
        & SDP & \bf 0.02 & 0.014 & 0.008 & 0.027 & 0.012 & \bf 0.006 \\
         \specialrule{.1em}{.05em}{.05em}

        Artificial-30' & CIP-impl & 0.071 & 0.022 & \bf 0.015 &   0.074 & 0.023 & \bf 0.012   \\
        $100 \times 100$& Baseline1 & 3.503 & 7.614 & 11.68 &   2.066 & 4.814 & 7.508  \\
        & SDP & \bf 0.037 & \bf 0.021 & 0.019 & \bf 0.046 & \bf 0.022 & 0.014 \\
         \specialrule{.1em}{.05em}{.05em}     
    \end{tabular}
    \caption{Gap values for smaller artificial instances with size $100 \times 100$}
    \label{tab:small-artificial-instances-results}
\end{table}

\begin{table}[ht]
    \centering
    \begin{tabular}{|c|c||c|c|c||c|c|c|c|c|c|c|} 
				\nthickrule
       	name  & param $(r, k)$: & $(2, 10)$ & $(2, 20)$ & $(2, 30)$ &   $(3, 10)$ & $(3, 20)$ & $(3, 30)$  \\ \nthickrule
         
        CovColon' & CIP-impl & 0.12 & 0.119 & \bf 0.094 &  0.127  & 0.124 & 0.104  \\
        $100 \times 100$& Baseline1 & \bf 0.063 & \bf 0.117 & 0.132  & \bf 0.052 & \bf 0.086 & \bf 0.098  \\
        & SDP & 0.428 & 0.450 & 0.442 & 0.434 & 0.452 & 0.438 \\
        \nthickrule
        
        Lymp' & CIP-impl & 0.329  & \bf 0.272 & \bf 0.269 &  0.225 & 0.296 & 0.32  \\
        $100 \times 100$& Baseline1 & \bf 0.095  & 0.277  & 0.392 &  \bf 0.049 & \bf 0.178 & \bf 0.297  \\
        & SDP & 0.355 & 0.324 & 0.31 & 0.390 & 0.340 & 0.352 \\
        \nthickrule
        
        Reddit1500' & CIP-impl & \bf 0.155 & \bf 0.139 & \bf 0.126 &  \bf 0.129 & \bf 0.109 & \bf 0.025  \\
        $100 \times 100$& Baseline1 & 0.695 & 0.396 & 0.99 &  1.197 & 0.811 & 1.294  \\
        & SDP & 0.205 & 0.216 & 0.176 & 0.158 & 0.188 & 0.177 \\
        \nthickrule
        
        Reddit2000' & CIP-impl & \bf 0.029 & \bf 0.014 & \bf 0.011 & \bf 0.092 & \bf 0.054 & \bf 0.011 \\
        $100 \times 100$& Baseline1 & 0.876 &1.426 & 1.794 &  0.638 & 1.075 & 1.333  \\
        & SDP & 0.097 & 0.061 & 0.029 & 0.093 & 0.064 & 0.031 \\
        \nthickrule  
    \end{tabular}
    \caption{Gap values for smaller real instances with size $100 \times 100$}
    \label{tab:small-real-instances-results}
\end{table}

\paragraph{Observations:}
\begin{itemize}
	\item In Table~\ref{tab:small-artificial-instances-results} we see that for the relatively easy artificial instances both \ref{eq:CIP-impl} and SDP find quite tight upper bounds. 
	\item In Table~\ref{tab:small-real-instances-results} we see that for real instances SDP is substantially dominated by both \ref{eq:CIP-impl} and Baseline1. 
\end{itemize}

Overall, on the 42 instances, 
\begin{itemize}
    \item the dual bounds from \ref{eq:CIP-impl} are the best for $26$ instances,
    \item the dual bounds from Baseline-1 are the best for $9$ instances,
    \item the dual bounds from Baseline-2 (SDP) are the best for $11$ instances.
\end{itemize}
Since the computation of Baseline-1 scales trivially in comparison to solving the SDP, and since SDP seems to produce dual bounds of poorer quality for the more difficult real instances, in the next section {we discarded SDP from the comparison}.


\subsection{Larger instances}
\subsubsection{Sub-matrix technique for larger instances} \label{sec:num-sub-matrix}

In order to scale the convex integer program \ref{eq:CIP-impl} to handle the larger matrices that are now up to $2000 \times 2000$, we employ the following ``sub-matrix technique'' to reduce the dimension. 

Given a \emph{sub-matrix ratio parameter} $m \geq 1$ satisfying $\lceil m k \rceil \le d$, let $S := \{j_1, \ldots, j_{\lceil m k \rceil}\}$, where $\bm{A}_{j_1, j_1} \geq \cdots \geq \bm{A}_{j_{\lceil m k \rceil}, j_{\lceil m k \rceil}}$, be the index set of the top-$\lceil m k \rceil$ largest diagonal entries of $\bm{A}$. Consider the blocked representation of the sample covariance matrix $\bm{A}$: 
\begin{align*}
	\bm{A} = \begin{pmatrix}
		\bm{A}_{S, S} & \bm{A}_{S, S^C} \\
		\bm{A}_{S, S^C}^{\top} & \bm{A}_{S^C, S^C} \\
	\end{pmatrix},
\end{align*}
where $S^C := [d] \setminus S$. Then the optimal value $\text{opt}^{\mathcal{F}}$ satisfies
\begin{align*}
	\text{opt}^{\mathcal{F}} = \max_{\bm{V} \in \mathcal{F}} & ~  \text{Tr}(\bm{V}^{\top} \bm{A} \bm{V}) \\
	= \max_{\bm{V} \in \mathcal{F}} & ~  \textup{Tr}\left( (\bm{V}_{S})^{\top} \bm{A}_{S,S} \bm{V}_{S} \right) + 2\, \text{Tr}\left( (\bm{V}_{S})^{\top} \bm{A}_{S, S^C} \bm{V}_{S^C} \right) \\
	& ~ + \textup{Tr}\left( (\bm{V}_{S^C})^{\top} \bm{A}_{S^C, S^C} \bm{V}_{S^C} \right). \tag{submatrix-tech} \label{eq:submatrix-tech}
\end{align*}
The first and third term have straight forward upper bounds. Now we need to consider the problem of finding an upper bound on $\textup{Tr}\left((\bm{V}_{S})^{\top} \bm{A}_{S, S^C} \bm{V}_{S^C} \right)$. 

Let $S^{\ast}$ be the global optimal row-support set of \ref{eq:Multi-SPCA}. Then 
\begin{align*}
	& ~ \textup{Tr}\left( (\bm{V}_{S})^{\top} \bm{A}_{S, S^C} \bm{V}_{S^C} \right) \\
	= & ~ \text{Tr} \left( 
	\begin{pmatrix}
		(\bm{V}_{S \cap S^{\ast}})^{\top} & (\bm{V}_{S \backslash S^{\ast}})^{\top} 
	\end{pmatrix} 
	\begin{pmatrix}
		\bm{A}_{S \cap S^{\ast}, S^C \cap S^{\ast}} & \bm{A}_{S \cap S^{\ast}, S^C \backslash       S^{\ast}} \\
		\bm{A}_{S \backslash S^{\ast}, S^C \cap S^{\ast}} & \bm{A}_{S \backslash S^{\ast}, S^C \backslash       S^{\ast}}
	\end{pmatrix}
	\begin{pmatrix}
		\bm{V}_{S^C \cap S^{\ast}}  \\
		\bm{V}_{S^C \backslash S^{\ast}}
	\end{pmatrix}
	\right) \\
	= & ~ \textup{Tr} \left( (\bm{V}_{S \cap S^{\ast}})^{\top}  \bm{A}_{S \cap S^{\ast}, S^C \cap S^{\ast}} \bm{V}_{S^C \cap S^{\ast}} \right).
\end{align*}
Since $\bm{V}^{\top} \bm{V} = \bm{I}^r$, then we have $\bm{V}_{S \cap S^{\ast}}^{\top} \bm{V}_{S \cap S^{\ast}} + \bm{V}_{S^C \cap S^{\ast}}^{\top} \bm{V}_{S^C \cap S^{\ast}} = \bm{I}^r$. Thus it is sufficient to consider the following optimization problem:
\begin{align*}
	2\max_{\bm{V}^1, \bm{V}^2} & ~ \textup{Tr}\left( (\bm{V}^1)^{\top} \bm{A}_{S \cap S^{\ast}, S^C \cap S^{\ast}} \bm{V}^2 \right) \text{ s.t. } (\bm{V}^1)^{\top} \bm{V}^1 + (\bm{V}^2)^{\top} \bm{V}^2 = \bm{I}^r,
\end{align*}
We show in Proposition~\ref{prop:submat}, proved in the appendix, that the above term is upper bounded by $\sqrt{r} \cdot \|\bm{A}_{(S \cap S^{\ast}), (S^C \cap S^{\ast}) }\|_F$.

Therefore, letting $\tilde{k} := |S \cap S^{\ast}|$ be the cardinality of the intersection, we can upper bound the right-hand side of \eqref{eq:submatrix-tech} as
\begin{align*}
	\text{opt}^{\mathcal{F}} \leq \text{ub}^{\text{CIP}}(\bm{A}_{S, S};\tilde{k}) + \sqrt{r}\cdot \|\bm{A}_{S \cap S^{\ast}, S^C \cap S^{\ast} }\|_F + \text{Baseline-1}(\bm{A}_{S^C, S^C};k - \tilde{k}),
\end{align*} 
where the first term $\text{ub}^{\text{CIP}}(\bm{A}_{S, S};\tilde{k})$ is the optimal value obtained from \ref{eq:CIP-impl} with covariance matrix $\bm{A}_{S, S}$ and sparsity parameter $\tilde{k}$ (if $\tilde{k} < r$, then reset $\tilde{k} = r$), and the third term is the value of Baseline-1 obtained from $\bm{A}_{S^C, S^C}$ with sparsity parameter $k - \tilde{k}$. 

Since $S^{\ast}$ is unknown, then the second term can be further upper bounded by 
\begin{align*}
	\|\bm{A}_{S \cap S^{\ast}, S^{\ast} \backslash S}\|_F \leq & ~ \sqrt{ \left\| \bm{A}_{ \{j_1\}, S^C}^{k - \tilde{k}} \right\|_2^2 + \cdots + \left\|\bm{A}_{\{ j_{\tilde{k}}\}, S^C}^{k - \tilde{k}} \right\|_2^2 } =: \text{ub}(S; \tilde{k}; S^C; k - \tilde{k}),
\end{align*}
where 
\begin{align*}
	\| \bm{A}_{\{j\}, S^C}^{l} \|_2^2 := \bm{A}_{j, i_1}^2 + \cdots + \bm{A}_{j, i_l}^2 \text{ with } |\bm{A}_{j, i_1}| \geq \cdots \geq |\bm{A}_{j, i_l}| \geq \ldots \text{ for all } i \in S^C,
\end{align*}
and $j_1, \ldots, j_{\tilde{k}}$ are indices satisfying: $
	\left\| \bm{A}_{j_1, S^C}^{k - \tilde{k}} \right\|_2^2 \geq \cdots \geq \left\|\bm{A}_{j_{\tilde{k}}, S^C}^{k - \tilde{k}} \right\|_2^2 \geq \cdots. $

Since $\tilde{k}$ is also not known, we arrive at our final upper bound $\text{ub}^{\text{sub-mat}}$ by considering all of its possibilities:
\begin{align*}
	\text{opt}^{\mathcal{F}} \leq &\max_{\tilde{k} = 0}^k \left\{ \text{ub}^{\text{CIP}}(\bm{A}_{S, S};\tilde{k}) + \sqrt{r}\cdot\text{ub}(S; \tilde{k}; S^C; k - \tilde{k}) + \text{Baseline-1}(\bm{A}_{S^C, S^C};k - \tilde{k}) \right\} \\
&=: \text{ub}^{\text{sub-mat}}.
\end{align*}

\subsubsection{Times for larger instances}

We set a more stringent time limit of 20 seconds for each \ref{eq:CIP-impl} used within the sub-matrix technique, since a number of these computations are required to compute $\text{ub}^{\text{sub-mat}}$. Again we did not set a time limit for the primal heuristic and just noted its running times as a function of the matrix size in Table~\ref{tab:lb-running-time}. 

\begin{table}[ht]
    \centering
    \begin{tabular}{ccccccccc} 
    \toprule
        size & \,$500 \times 500$\, & \,$1500 \times 1500$\, & \,$2000 \times 2000$\, \, \\ \midrule
		running time & $\leq 20$ min & $\leq 100$ min & $\leq 120$ min  \\
    \bottomrule
    \end{tabular} 
    \caption{Running time for primal heuristic}
    \label{tab:lb-running-time}
\end{table}


\subsubsection{Results on larger instances}

We compare the gap obtained by the upper bound $\text{ub}^{\text{sub-mat}}$ (\ref{eq:CIP-impl} plus sub-matrix technique) and compare it against that obtained by Baseline1 on the artificial and real instances with original sizes. These are reported on Tables~\ref{tab:artificial-instances-submatrix-tech} and~\ref{tab:real-instances-submatrix-tech-part1}.

	On the spiked covariance matrix artificial instances, we see that our dual bound $\text{ub}^{\text{sub-mat}}$ is typically orders of magnitude better than Baseline1 and is at most 0.35 for all instances. These results also illustrate that the sub-matrix ratio parameter can significantly impact the bound obtained by the sub-matrix technique.  

	On the real instances, we see from Table~\ref{tab:real-instances-submatrix-tech-part1} that on instances CovColon and Lymph our dual bound $\text{ub}^{\text{sub-mat}}$ performs slightly better than Baseline1 (except instance Lymph with parameters $(3, 10)$), and the gaps are overall less than 0.39. However, on instances Reddit1500 and Reddit2000 our dual bound $\text{ub}^{\text{sub-mat}}$ vastly outperforms Baseline1 on all settings of parameters. We remark that these are the largest instances in the experiments, which attest to the scalability of our proposed bound. 

\begin{table}[ht]
    \centering
  
    \begin{tabular}{|c|c||c|c|c||c|c|c|c|c|c|} 
    \nthickrule
       name & param $(r, k)$: &  $(2, 10)$ & $(2, 20)$ & $(2, 30)$ &  $(3, 10)$ & $(3, 20)$ & $(3, 30)$ \\ \nthickrule
       	
       	Artificial-10 & $m = 1.5$ & 0.527 & 0.151 & 0.25 &   0.366 & 0.1 & 0.169   \\
       	$500 \times 500$& $m = 2$ &  0.079 & 0.15 & 0.249 &   0.064 & 0.1 & 0.169   \\
       	& $m = 2.5$ &  0.079 & 0.15 & 0.248 &  0.064 & 0.099 & 0.168  \\ 
       	& $m = 5$ & 0.071 & 0.145 & 0.241 &  0.056 & 0.099 & 0.293  \\
       	& $m = 10$ & \bf 0.026 & \bf 0.002 & \bf 0.002 &   \bf 0.03 & \bf 0.003 & \bf 0.003 \\ 
       	& Baseline1 & 3.522 & 4.309 & 4.403 &  2.101 & 2.625 & 2.688  \\      	%
       	\nthickrule
       	
       	Artificial-20 & $m = 1.5$ & 2.397 & 0.566 & 0.268 &   1.629 & 0.384 & 0.186  \\
       	$500 \times 500$& $m = 2$ & 0.455 & 0.179 & 0.266 & 0.317 & 0.127 & 0.185  \\
       	& $m = 2.5$ & 0.606 & 0.178 & 0.265 &  0.463 & 0.126 & 0.184  \\ 
       	& $m = 5$ & 0.097 & 0.176 & 0.261 & \bf 0.078 & 0.124 & 0.346   \\
       	& $m = 10$ & \bf 0.073 & \bf 0.014 & \bf 0.009 &   0.139 & \bf 0.013 & \bf 0.008 \\
       	& Baseline1 & 3.58 & 7.838 & 8.251 &  2.097 & 4.942 & 5.216 \\
       	\nthickrule
       	
       	Artificial-30 & $m = 1.5$ & 3.515 & 0.595 & 0.65 &  2.071 & 0.406 & 0.425 \\
       	$500 \times 500$& $m = 2$ & 3.509 & 0.721 & 0.314 &   2.068 & 0.512 & 0.211  \\
       	& $m = 2.5$ &  2.304 & 0.709 & 0.312 &   1.586 & 0.511 & 0.209  \\
       	& $m = 5$ & 0.474 & 0.225 & 0.305 &  0.365 & 0.158 & 0.468 \\
       	& $m = 10$ & \bf 0.231 & \bf 0.026 & \bf 0.017 &   \bf 0.349 & \bf 0.154 & \bf 0.014  \\
       	& Baseline1 & 3.519 & 7.626 & 11.68 &   2.074 & 4.82 & 7.508 \\
       	\nthickrule
    \end{tabular}
    \caption{Gap values for artificial instances.}
    \label{tab:artificial-instances-submatrix-tech}
\end{table}

\begin{table}[ht]
    \centering
    \begin{tabular}{|c|c||c|c|c||c|c|c|c|c|c|} 
    \nthickrule
	name  & param $(r, k)$: &  $(2, 10)$ & $(2, 20)$ & $(2, 30)$ & $(3, 10)$ & $(3, 20)$ & $(3, 30)$  \\ \nthickrule
       	       	
       	CovColon & $m = 1.5$ & 0.054 & 0.112 & 0.128 &  0.05 & 0.08 & 0.092   \\
       	$500 \times 500$ & $m = 2$ & 0.051 & 0.107 & 0.126 &   0.062 & \bf 0.076 & 0.09 \\ 
       	& $m = 2.5$ & \bf 0.05 & \bf 0.104 & \bf 0.124 &   0.066 & 0.089 &\bf 0.088  \\ 
       	& $m = 5$ & 0.094 & 0.113 & 0.143 &  0.11 & 0.122 & 2.349 \\
       	& $m = 10$& 1.787 & 1.709 & 1.645 &   3.321 & 3.124 & 3.015  \\  
       	& Baseline1 & 0.063 & 0.118 & 0.133 &  \bf 0.049 & 0.086 & 0.097   \\
       	\nthickrule
       	
       	Lymph & $m = 1.5$ & 0.09 & 0.27 & 0.41 &    0.064 & 0.174 & 0.315   \\ 
       	$500 \times 500$ & $m = 2$ & \bf 0.078 & 0.267 & 0.406 &   0.103 & \bf 0.171 & 0.312  \\
       	& $m = 2.5$ & 0.104 & \bf 0.264 & 0.403 &  0.155 & 0.194 & \bf 0.309 \\ 
       	& $m = 5$ & 0.236 & 0.268 & \bf 0.388 &  0.2 & 0.296 & 2.698  \\ 
       	& $m = 10$ & 2.105 & 1.738 & 1.548 &  4.489 & 3.894 & 3.447  \\
       	& Baseline1 & 0.095 & 0.277 & 0.413 &  \bf 0.049 & 0.18 & 0.319  \\  
       	\nthickrule
       	
       	Reddit1500 & $m = 1.5$ & 0.687 & 0.95 & 0.8 &  0.39 & 0.625 & 0.677   \\ 
       	$1500 \times 1500$ & $m = 2$ & 0.683 & 0.94 & 0.749 &  0.387 & 0.617 & 0.632  \\ 
       	& $m = 2.5$ & 0.672 & 0.937 & \bf 0.727 &  0.377 & 0.614 & \bf 0.611  \\ 
       	& $m = 5$ & 0.426 & \bf 0.47 & 1.068 & 0.346 & \bf 0.393 & 1.307  \\ 
       	& $m = 10$ & \bf 0.384 & 0.927 & 1.075 & \bf 0.316 & 1.222 & 1.343 \\
       	& Baseline1 & 0.695 & 0.962 & 1.199 & 0.396 & 0.635 & 0.848  \\ 
       	\nthickrule
       	
       	Reddit2000 & $m = 1.5$ & 0.845 & 1.408 & 0.76 &  0.556 & 1.026 & 0.667  \\
       	$2000 \times 2000$ & $m = 2$ & 0.837 & 1.4 & 0.664 &  0.549 & 1.019 & 0.585  \\
       	& $m = 2.5$ & 0.827 & 1.396 & \bf 0.601 & 0.541 & 1.016 & \bf 0.538  \\
       	& $m = 5$ & 0.456 & \bf 0.436 & 1.52 &  0.395 & \bf 0.381 & 1.311  \\
       	& $m = 10$ & \bf 0.298 & 0.866 & 2.234 &  \bf 0.266 & 1.289 & 1.41   \\
       	& Baseline1 & 0.876 & 1.426 & 1.775 & 0.582 & 1.041 & 1.326  \\        	
  
    \nthickrule
    \end{tabular}
    \caption{Gap values for real instances.}
    \label{tab:real-instances-submatrix-tech-part1}
\end{table}



\section{Conclusion}
In this paper, we proposed a scheme for producing good primal feasible solutions and dual bounds for \ref{eq:Multi-SPCA} problem. The primal feasible solution is obtained from a monotonically improving heuristic for \ref{eq:Multi-SPCA} problem. We showed that the solutions produced by this algorithm are of very high quality by comparing the objective value of the solutions generated to upper bounds. These upper bounds are obtained using second-order cone IP relaxation designed in this paper. We also presented theoretical guarantees (affine guarantee) on the quality of the upper bounds produced by the second-order cone IP. The running times for both the primal algorithm and the dual bounding heuristic are very reasonable (less than $2$ hours for the $500\times 500$ instances and less than $3.5$ hours for the $2000\times 2000$ instance). These problems are quite challenging, and in some instances, we still need more techniques to close the gap. However, to the best of our knowledge, there are no comparable theoretical or computational results for solving model-free \ref{eq:Multi-SPCA}.

\section{Acknowledgements}
We would like to thank the anonymous reviewers for excellent comments that significantly improved the paper. In particular, the SDP presented in Section 2.2 and the heuristic method in Appendix B.3 have been suggested by the reviewers.

\medskip Marco Molinaro was supported in part by the Coordena\'c\~ao de Aperfei\'coamento de Pessoal de N\'ivel Superior (CAPES, Brasil) - Finance Code 001, by Bolsa de Produtividade em Pesquisa $\#3$12751/2021-4 from CNPq, FAPERJ grant ``Jovem Cientista do Nosso Estado'', and by the CAPES-PrInt program. Santanu S. Dey would like to gratefully acknowledge the support of the grant
N000141912323 from ONR.

\bibliographystyle{spmpsci}
\bibliography{refs}

\vspace{1cm}
\appendix
\noindent {\LARGE \bf Appendix}

\section{Additional concentration inequalities} \label{app:conc}

	We need the standard multiplicative Chernoff bound (see Theorem 4.4~\cite{mitzenmacher2017probability}).
	\begin{lemma}[Chernoff Bound] \label{lemma:chernoff}
		Let $X_1,\ldots,X_n$ be independent random variables taking values in $[0,1]$. Then for any $\delta > 0$ we have 
		\begin{align*}
			\Pr\bigg(\sum_i X_i > (1+\delta) \mu \bigg) \,<\, \bigg(\frac{e}{1+\delta}\bigg)^{(1+\delta) \mu},
		\end{align*}
		where $\mu = \E \sum_i X_i$. 
	\end{lemma}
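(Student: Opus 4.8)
The plan is to prove this via the standard exponential-moment (Chernoff) method, since the statement is exactly the multiplicative Chernoff bound for independent $[0,1]$-valued variables. First I would fix an arbitrary parameter $t > 0$ and apply Markov's inequality to the nonnegative random variable $e^{t \sum_i X_i}$, which gives $\Pr(\sum_i X_i > (1+\delta)\mu) = \Pr(e^{t\sum_i X_i} > e^{t(1+\delta)\mu}) \le e^{-t(1+\delta)\mu}\, \E\, e^{t \sum_i X_i}$. Because the $X_i$ are independent, the moment generating function factorizes as $\E\, e^{t \sum_i X_i} = \prod_i \E\, e^{tX_i}$, reducing the problem to controlling a single term.

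For the per-term bound I would exploit that each $X_i$ takes values in $[0,1]$: since $x \mapsto e^{tx}$ is convex, on $[0,1]$ it lies below the chord joining its values at the endpoints $0$ and $1$, so $e^{tX_i} \le 1 + (e^t - 1)X_i$ pointwise. Taking expectations and writing $p_i := \E X_i$ yields $\E\, e^{tX_i} \le 1 + (e^t - 1)p_i \le \exp((e^t-1)p_i)$, where the last step uses $1 + y \le e^y$. Multiplying over $i$ and using $\sum_i p_i = \mu$ gives $\E\, e^{t \sum_i X_i} \le \exp((e^t-1)\mu)$, and therefore $\Pr(\sum_i X_i > (1+\delta)\mu) \le \exp\big(\mu[(e^t - 1) - t(1+\delta)]\big)$ for every $t > 0$.

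The next step is to optimize the free parameter. Minimizing $(e^t - 1) - t(1+\delta)$ over $t$ gives the first-order condition $e^t = 1 + \delta$, i.e. $t = \log(1+\delta)$, which is positive precisely because $\delta > 0$; substituting back produces the sharp bound $\big(e^\delta / (1+\delta)^{1+\delta}\big)^\mu$. Finally, to match the form stated in the lemma I would observe the identity $\big(e/(1+\delta)\big)^{(1+\delta)\mu} = e^{\mu} \cdot \big(e^\delta/(1+\delta)^{1+\delta}\big)^{\mu}$, so that the claimed bound is exactly $e^\mu$ times the sharp one.

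I do not expect a genuine obstacle, as the result is classical and every estimate is elementary. The only points requiring care are justifying the convexity inequality $e^{tx} \le 1 + (e^t-1)x$ across the whole interval $[0,1]$ (not merely at the endpoints), and the final simplification to the weaker but more convenient form $\big(e/(1+\delta)\big)^{(1+\delta)\mu}$ that is actually invoked in \eqref{eq:PGsparsity}; the latter follows because $e^\mu > 1$ whenever $\mu > 0$, which also accounts for the strict inequality in the statement.
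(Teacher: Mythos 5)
Your proof is correct and is exactly the standard exponential-moment argument; the paper does not prove this lemma itself but simply cites Theorem 4.4 of Mitzenmacher--Upfal, whose proof is the same Markov-plus-MGF computation you give (with the convexity step handling general $[0,1]$-valued rather than Bernoulli variables). Your closing observation that the stated bound is the sharp bound $\bigl(e^{\delta}/(1+\delta)^{1+\delta}\bigr)^{\mu}$ inflated by a factor $e^{\mu}$ correctly accounts for both the weaker form and the strict inequality.
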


	We also need the one-sided Chebychev inequality, see for example Exercise 3.18 of~\cite{mitzenmacher2017probability}.
	
	\begin{lemma}[One-sided Chebychev] \label{lemma:cheby}
		For any random variable $X$ with finite first and second moments
		\begin{align*}
			\Pr\bigg( X \le \E X - t \bigg) \,\le\, \frac{\Var(X)}{\Var(X) + t^2}.
		\end{align*}
	\end{lemma}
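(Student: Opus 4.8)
The plan is to reduce to the centered case and then apply Markov's inequality to a shifted, squared version of the random variable, optimizing over the shift parameter; this is the classical Cantelli argument. I focus on the regime of interest $t > 0$ (for $t = 0$ the right-hand side equals $1$ and the bound is trivial). First I would set $\mu := \E X$ and $\sigma^2 := \Var(X)$, and introduce the centered variable $Z := \mu - X$, so that $\E Z = 0$ and $\Var(Z) = \sigma^2$. The target probability rewrites as $\Pr(X \le \E X - t) = \Pr(Z \ge t)$, which is what I will bound.

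The key move is to introduce a free parameter $c \ge 0$ and observe the event containment $\{Z \ge t\} \subseteq \{(Z + c)^2 \ge (t + c)^2\}$, which holds because $Z \ge t$ forces $Z + c \ge t + c > 0$. Applying Markov's inequality to the nonnegative random variable $(Z + c)^2$ then yields
$$\Pr(Z \ge t) \,\le\, \Pr\big((Z+c)^2 \ge (t+c)^2\big) \,\le\, \frac{\E[(Z+c)^2]}{(t+c)^2}.$$
Since $\E Z = 0$, the numerator simplifies cleanly as $\E[(Z+c)^2] = \E[Z^2] + 2c\,\E Z + c^2 = \sigma^2 + c^2$, giving the one-parameter family of bounds $\Pr(Z \ge t) \le \frac{\sigma^2 + c^2}{(t+c)^2}$, valid for every $c \ge 0$.

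The final step is to optimize over the shift. Differentiating the right-hand side in $c$ and setting the derivative to zero produces the unique positive stationary point $c = \sigma^2 / t$, which is a minimizer. Substituting $c = \sigma^2/t$ collapses the bound to $\frac{\sigma^2}{\sigma^2 + t^2}$, exactly the claimed inequality $\Pr(X \le \E X - t) \le \frac{\Var(X)}{\Var(X) + t^2}$.

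I expect the only real subtlety to be conceptual rather than computational: the naive two-sided estimate $\Pr(|Z| \ge t) \le \sigma^2/t^2$ is weaker than the target, and the improvement is recovered precisely by the shift $c$ together with the squaring. The two ingredients that make the optimization tractable are the event containment (which legitimizes passing to $(Z+c)^2$) and the vanishing of the linear term via $\E Z = 0$; after those the minimization over $c$ is a routine single-variable calculus exercise.
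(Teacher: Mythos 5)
Your proof is correct, and there is nothing to compare it against: the paper does not prove this lemma itself but defers to Exercise 3.18 of Mitzenmacher--Upfal, whose intended solution is exactly the Cantelli argument you give (center to $Z = \E X - X$, bound $\Pr(Z \ge t) \le \E[(Z+c)^2]/(t+c)^2$ via Markov, and optimize at $c = \Var(X)/t$). Your computation checks out in every detail, including the implicit restriction $t \ge 0$ that you correctly flag, which is all the paper ever needs, since it applies the lemma in \eqref{eq:PGvalue} with $t = \tfrac{1}{2}\ip{\bm{C}}{\bm{V}^*} \ge 0$.
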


{\color{black}
\section{Scaling invariance of Theorem \ref{thm:CIP}} \label{app:invariance}

Given any data matrix $\bm{X} \in \mathbb{R}^{d \times M}$ with $M$ samples, the sample covariance matrix is $\bm{A} = \frac{1}{M} \bm{X}\bm{X}^{\top}$. Theorem~\ref{thm:CIP} shows that
\begin{align*}
	\text{opt}^{\mathcal{F}}(\bm{A}) \leq \text{ub}^{\mathcal{CR}i}(\bm{A}) \leq \rho_{\mathcal{CR}i}^2 \cdot \text{opt}^{\mathcal{F}}(\bm{A}) + \underbrace{\sum_{j = 1}^d \frac{r \theta_j^2 \lambda_j(\bm{A})}{4N^2}}_{\text{additive term}, ~ =: \text{add}(\bm{A})} =: \text{aff}(\bm{A}).  
\end{align*}
Note that rescaling the data matrix $\bm{X}$ to $\tilde{\bm{X}} = c \cdot \bm{X}$ for any constant $c > 0$ does not change the approximation ratios $\rho_{\mathcal{CR}i}^2$ for $i \in \{1, 1', 2\}$ and changes the terms $\textup{opt}^{\mathcal{F}}(\bm{A})$ and $\textup{add}(\bm{A})$ quadratically: letting $\tilde{\bm{A}} := \frac{1}{M} \tilde{\bm{X}}\tilde{\bm{X}}^{\top} = c^2 \bm{A}$,
\begin{align*}
	& \text{opt}^{\mathcal{F}}(\tilde{\bm{A}}) = \max_{\bm{V}^{\top} \bm{V} = \bm{I}^r, \|\bm{V}\|_0 \leq k} \text{Tr}(\bm{V}^{\top}\tilde{\bm{A}} \bm{V}) = \max_{\bm{V}^{\top} \bm{V} = \bm{I}^r, \|\bm{V}\|_0 \leq k} c^2 \cdot \text{Tr}(\bm{V}^{\top} \bm{A} \bm{V}) = c^2 \cdot \text{opt}^{\mathcal{F}}(\bm{A}), \\
	& \text{add}(\tilde{\bm{A}}) = \sum_{j = 1}^d \frac{r \theta_j^2 \lambda_j(\tilde{\bm{A}})}{4N^2} = \sum_{j = 1}^d \frac{r \theta_j^2 \lambda_j(c^2 \bm{A})}{4N^2} = c^2 \cdot \sum_{j = 1}^d \frac{r \theta_j^2 \lambda_j(\bm{A})}{4N^2} = c^2 \cdot \text{add}(\bm{A}).
\end{align*}
In particular, this implies that the \emph{effective multiplicative ratio (emr)} between $\text{opt}^{\mathcal{F}}(\bm{A})$  and the affine upper bound $\textup{aff}(\bm{A})$ is invariant under rescaling:
\begin{align*}
	\text{emr}(\bm{A}) := \frac{\text{aff}(\bm{A})}{\text{opt}^{\mathcal{F}}(\bm{A})} = & ~ \frac{\rho_{\mathcal{CR}i}^2 \cdot \text{opt}^{\mathcal{F}}(\bm{A}) + \text{add}(\bm{A})}{\text{opt}^{\mathcal{F}}(\bm{A})} \\
	= & ~ \frac{\rho_{\mathcal{CR}i}^2 \cdot c^2 \cdot \text{opt}^{\mathcal{F}}(\bm{A}) + c^2 \cdot \text{add}(\bm{A})}{ c^2 \cdot \text{opt}^{\mathcal{F}}(\bm{A})} \\
	= & ~ \frac{\rho_{\mathcal{CR}i}^2 \cdot \text{opt}^{\mathcal{F}}(\tilde{\bm{A}}) + \text{add}(\tilde{\bm{A}})}{ \text{opt}^{\mathcal{F}}(\tilde{\bm{A}})} \\
	= & ~ \frac{\text{aff}(\tilde{\bm{A}})}{\text{opt}^{\mathcal{F}}(\tilde{\bm{A}})} = \text{emr}(\tilde{\bm{A}}).
\end{align*}
}


\section{Greedy heuristic for \ref{eq:Multi-SPCA}}
	
	\subsection{Complete pseudocode} \label{app:pseudo}
	
\begin{algorithm}[ht] 
  \textbf{Input:} Covariance matrix $\bm{A}$, sparsity parameter $k$, number of maximum iterations $T$\\
  \textbf{Output:} A feasible solution $\bm{V}$ for \ref{eq:Multi-SPCA}.  
  \begin{algorithmic}
  \STATE \textbf{Initialize} with $S_0 \subseteq [d]$
\STATE Compute eigendecomposition of $A_{S_0}$: $\bm{A}_{S_0,S_0} = \bm{U}_{S_0} \bm{\Lambda}_{S_0} \bm{U}_{S_t}^{\top}$, $\bm{V}_{S_0} = (\bm{U}_{S_0})_{\star, [r]}$
  \FOR{$t = 1, \ldots, T$}   
    	 \STATE Compute the leaving candidate $j^{\text{out}} := \argmin_{j \in S_{t - 1}} \Delta^{\oout}_j$  	 
		 \STATE Compute the entering candidate $j^{\text{in}} := \argmax_{j \in S^C_{t - 1}} \Delta^{\iin}_{j}$    	 
    	 \IF{$\Delta^{\iin}_{j^{\text{in}}} > \Delta^{\oout}_{j^{\text{out}}}$}
    	 	\STATE Set $S_t := S_{t - 1} - \{j^{\text{out}}\} + \{j^{\text{in}}\}$    	 	
    	 	\STATE Compute the eigenvalue decomposition $(\bm{A}^{1/2})_{S_{t}} = \bm{U}_{S_{t}} \bm{\Lambda}_{S_t} \bm{U}_{S_t}^{\top}$
    	 	\STATE Set $\bm{V}^t_{S_t} = (\bm{U}_{S_t})_{\star, [r]}$
    	 \ELSE    	 
    	 	\STATE \textbf{Return} the matrix $\bm{V}$ where in rows $S_{t-1}$ equals $\bm{V}^{t-1}_{S_{t-1}}$ (i.e., $\bm{V}_{S_{t-1}} = \bm{V}^{t-1}_{S_{t-1}}$) and in rows $S^C_{t-1}$ equals zero
    	 \ENDIF    	 
   \ENDFOR
   \end{algorithmic}
\caption{Modified greedy neighborhood search}
\label{algo:LSM}
\end{algorithm}


	\subsection{Proof of Lemma \ref{lemma:primal}} \label{app:monoProof}
	
	By optimality of $\bm{V}^t_{S_t}$ we can see that $f(S_t) = f(S_t, \bm{V}^t_{S_t})$ for all $t$. Thus, letting $\bm{G}_t := \bm{I}^k - \bm{V}^t_{S_{t}} (\bm{V}_{S_{t}}^t)^{\top}$ to simplify the notation, we have
\begin{align*} 
f(S_{t-1}) = f(S_{t-1}, \bm{V}^{t-1}_{S_{t-1}})	&= \left\|\bm{G}_t\, (\bm{A}^{1/2})_{S_{t - 1}} \right\|_F^2 + \sum_{j \in S_{t - 1}^C} \left\|(\bm{A}^{1/2})_{j} \right\|_2^2 \\
	&= ~ \left\|\bm{G}_t\, \bm{A}^{1/2}_{S_{t}} \right\|_F^2 + \sum_{j \in S_t^C} \left\|\bm{A}^{1/2}_{j} \right\| _2^2 + \underbrace{ \Delta^{\iin}_{j^{\text{in}}} - \Delta^{\oout}_{j^{\text{out}}} }_{> 0} \\
	&> ~ \left\|\bm{G}_t \bm{A}^{1/2}_{S_{t}} \right\|_F^2 + \sum_{j \in S_t^C} \left\|\bm{A}^{1/2}_{j} \right\| _2^2\\
	&= f(S_t, \bm{V}^t_{S_t}) = f(S_t).
\end{align*}

{\color{black}
\subsection{Primal Heuristic Algorithm For Near-Maximal Determinant} \label{app:PH-det}
Here we present another primal heuristic algorithm that finds a principal submatrix whose determinant is near-maximal. 
\begin{algorithm}[ht] 
  \textbf{Input:} Covariance $\bm{A}$, sparsity $k$, parameter $r$.
  \begin{algorithmic}
  \STATE \textbf{Initialize} with the support set $S = \{\}$. 
  \FOR{$t = 1, \ldots, k$}
  \STATE Compute $i_t := \argmax_{i \in [d] \backslash S} \text{det}\left( \bm{A}_{S \cup \{i\}, S \cup \{i\}}\right)$. 
  \STATE Set $S := S \cup \{i_t\}$.
  \ENDFOR
  \end{algorithmic}
  \textbf{Output:} Support set $S$ and lower bound $\text{lb}_{\text{GNS}} := \argmax_{\bm{V}^{\top} \bm{V} = \bm{I}^r} \text{Tr}(\bm{V}^{\top} \bm{A}_{S, S} \bm{V})$. 
\caption{Greedy heuristic of finding a submatrix with near-maximal determinant}
\label{algo:GH-det}
\end{algorithm}

We compare the performance of the greedy neighborhood search Algorithm~\ref{algo:LSM}, with the greedy heuristic (\text{GH}) \ref{algo:GH-det} in Table~\ref{tab:artificial-instances-primal} and Table~\ref{tab:real-instances-primal} where we report the relative gap defined as $\text{Gap} := \frac{\text{lb}_{\text{GH}}}{\text{lb}_{\text{GNS}}}$, where $\text{lb}_{\text{GH}}, \text{lb}_{\text{GNS}}$ denote the primal lower bounds of sparse PCA obtained from the greedy heuristic and the greedy neighborhood search respectively. 

\begin{table}[ht]
    \centering
  
    \begin{tabular}{|c|c||c|c|c||c|c|c|c|c|c|} 
    \nthickrule
       name (size)& param $(r, k)$: &  $(2, 10)$ & $(2, 20)$ & $(2, 30)$ &  $(3, 10)$ & $(3, 20)$ & $(3, 30)$ \\ \nthickrule
       	
       	Artificial-10 (500)& Gap:  & 0.972 & 0.939 & 0.939 & 0.989 & 0.951 & 0.951 \\
         \specialrule{.1em}{.05em}{.05em} 
        
        Artificial-20 (500)& Gap: & 0.985 & 1.0 & 0.994 & 0.992 & 1.0 & 0.993  \\
         \specialrule{.1em}{.05em}{.05em}

        Artificial-30 (500)& Gap:& 0.97 & 0.979 & 1.0 & 0.984 & 0.983 & 1.0   \\
         \specialrule{.1em}{.05em}{.05em} 
    \end{tabular}
    \caption{Comparison between \text{GH} Algo~\ref{algo:GH-det} and \text{GNS} Algo~\ref{algo:LSM} on artificial instances where $\text{Gap} := \frac{\text{lb}_{\text{GH}}}{\text{lb}_{\text{GNS}}}$. }
    \label{tab:artificial-instances-primal}
\end{table}

\begin{table}[ht]
    \centering
  
    \begin{tabular}{|c|c||c|c|c||c|c|c|c|c|c|} 
    \nthickrule
       name (size)& param $(r, k)$: &  $(2, 10)$ & $(2, 20)$ & $(2, 30)$ &  $(3, 10)$ & $(3, 20)$ & $(3, 30)$ \\ \nthickrule
       	
       	CovColon (500)& Gap: & 0.48 & 0.383 & 0.362 & 0.522 & 0.416 & 0.387 \\
         \specialrule{.1em}{.05em}{.05em} 
        
        Lymp (500)& Gap:  & 0.426 & 0.384 & 0.366 & 0.496 & 0.442 & 0.424  \\
         \specialrule{.1em}{.05em}{.05em} 
         
        Reddit1500 (1500)& Gap: & 0.884 & 0.832 & 0.807 & 0.899 & 0.848 & 0.834  \\
         \specialrule{.1em}{.05em}{.05em} 
         
        Reddit2000 (2000)& Gap: & 0.931 & 0.912 & 0.908 & 0.936 & 0.91 & 0.9 \\
         \specialrule{.1em}{.05em}{.05em} 
    \end{tabular}
    \caption{Comparison between \text{GH} Algo~\ref{algo:GH-det} and \text{GNS} Algo~\ref{algo:LSM} on real instances where $\text{Gap} := \frac{\text{lb}_{\text{GH}}}{\text{lb}_{\text{GNS}}}$.}
    \label{tab:real-instances-primal}
\end{table}

Based on the numerical results in Table~\ref{tab:artificial-instances-primal} and Table~\ref{tab:real-instances-primal}, the greedy neighborhood search (GNS) algorithm outperforms the greedy heuristic (GH) in every instance. 

} 


\section{Techniques for reducing the running time of \ref{eq:CIP}} \label{app:reduce-running-time}
In practice, we want to reduce the running time of \ref{eq:CIP}. Here are the techniques that we used to enhance the efficiency in practice.

\subsection{Threshold}\label{app:threshold} 
The first technique is to reduce the number of SOS-II constraints in the set PLA. Let $\lambda_{\mathrm{TH}}$ be a threshold parameter that splits the eigenvalues $\{\lambda_j\}_{j = 1}^d$ of sample covariance matrix $\bm{A}$ into two parts $J^+ = \{j: \lambda_j > \lambda_{\mathrm{TH}} \}$ and $J^- = \{j: \lambda_j \leq \lambda_{\mathrm{TH}} \}$. The objective function $\text{Tr} \left( \bm{V}^{\top} \bm{A} \bm{V} \right)$ satisfies 
\begin{align*}
	\text{Tr} \left( \bm{V}^{\top} \bm{A} \bm{V} \right) = \sum_{j \in J^+} (\lambda_j - \lambda_{\mathrm{TH}}) \sum_{i = 1}^r g_{ji}^2 + \sum_{j \in J^-} (\lambda_j - \lambda_{\mathrm{TH}}) \sum_{i = 1}^r g_{ji}^2 + \lambda_{\mathrm{TH}} \sum_{j = 1}^d \sum_{i = 1}^r g_{ji}^2, 
\end{align*}
in which the first term is convex, the second term is concave, and the third term satisfies
\begin{align*}
	\lambda_{\mathrm{TH}} \sum_{j = 1}^d \sum_{i = 1}^r g_{ji}^2 \leq r \lambda_{\mathrm{TH}} \tag{threshold-term} \label{eq:threshold-term}
\end{align*}
due to $\sum_{j = 1}^d \sum_{i = 1}^r g_{ji}^2 \leq r$. Since maximizing a concave function is equivalent to convex optimization, we replace the second term by a new auxiliary variable $s$ and the third term by its upper bound $r \lambda_{\mathrm{TH}}$ such that 
\begin{align}
	\text{Tr} \left( \bm{V}^{\top} \bm{A} \bm{V} \right) \leq & ~ \sum_{j \in J^+} (\lambda_j - \lambda_{\mathrm{TH}}) \sum_{i = 1}^r g_{ji}^2 - s + r \lambda_{\mathrm{TH}} \tag{threshold-tech} \label{eq:threshold-tech} 
\end{align}
where
\begin{align*}
	s \geq \sum_{j \in J^-} \underbrace{(\lambda_{\mathrm{TH}} - \lambda_j)}_{\geq 0} \sum_{i = 1}^r g_{ji}^2 \tag{s-var} \label{eq:s-var}
\end{align*}
is a convex constraint. 
We select a value of $\lambda_{\mathrm{TH}}$ so that $|J^+| = 3$. Therefore, it is sufficient to construct a piecewise-linear upper approximation for the quadratic terms $g_{ji}^2$ in the first term with $j \in J^+$, i.e., constraint set $\text{PLA}([J^+] \times [r])$. 
We thus, greatly reduce the number of SOS-II constraints from $\mathcal{O}( d \times r )$ to $\mathcal{O}( |J^+| \times r )$, i.e. in our experiments to $3r$ SOS-II constraints.

\subsection{Cutting planes}
Similar to classical integer programming, we can incorporate additional cutting planes to improve the efficiency. 
 
\textbf{Cutting plane for sparsity:} 
The first family of cutting-planes is obtained as follows: Since $\|\bm{V}\|_0 \leq k$ and $\bm{v}_1, \ldots, \bm{v}_r$ are orthogonal, by Bessel inequality, we have
\begin{align*}
	& \sum_{i = 1}^r g_{ji}^2 = \sum_{i = 1}^r (\bm{a}_j^{\top} \bm{v}_i)^2 = \bm{a}_j^{\top} \bm{V} \bm{V}^{\top} \bm{a}_j \leq \theta_j^2, \tag{sparse-g} \label{eq:sparse-g} \\
	& \sum_{i = 1}^r \xi_{ji} \leq \theta_j^2 \left(1 + \frac{r}{4N^2} \right). \tag{sparse-xi} \label{eq:sparse-xi} 
\end{align*}
We call these above cuts--sparse cut since $\theta_j$ is obtained from the row sparsity parameter $k$. 

\textbf{Cutting plane from objective value:} 
The second type of cutting plane is based on the property: for any symmetric matrix, the sum of its diagonal entries are equal to the sum of its eigenvalues. Let $\bm{A}_{j_1, j_1}, \ldots, \bm{A}_{j_k, j_k}$ be the largest $k$ diagonal entries of the sample covariance matrix $\bm{A}$, we have
\begin{proposition}
	The following are valid cuts for \ref{eq:Multi-SPCA}: 
	\begin{align*}
		\sum_{j = 1}^d \lambda_j \sum_{i = 1}^r g_{ji}^2 \leq \bm{A}_{j_1, j_1} + \cdots + \bm{A}_{j_k, j_k}. \tag{cut-g} \label{eq:obj-cut-g}
	\end{align*}
	When the splitting points $\{\gamma_{ji}^{\ell}\}_{\ell = - N}^N$ in SOS-II are set to be $\gamma_{ji}^{\ell} = \frac{\ell}{N} \cdot \theta_j$, we have:
	\begin{align*}\tag{cut-xi} \label{eq:obj-cut-xi}
\begin{array}{rcl}
		\sum_{j \in J^+} (\lambda_j - \lambda_{\mathrm{TH}})\sum_{i = 1}^r \xi_{ji} - s + g \lambda_{\mathrm{TH}} &\leq& \bm{A}_{j_1, j_1} + \cdots + \bm{A}_{j_k, j_k} + \sum_{j \in J^+} \frac{r (\lambda_j - \phi) \theta_j^2}{4 N^2} \\
g &\geq &\sum_{j = 1}^d \sum_{i = 1}^r g_{ji}^2.
\end{array}
	\end{align*}
\end{proposition}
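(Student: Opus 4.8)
The plan is to prove the two displayed inequalities separately, with \eqref{eq:obj-cut-g} serving as a lemma that is then fed into \eqref{eq:obj-cut-xi}, after absorbing the piecewise-linear approximation error and the threshold decomposition.

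First I would establish \eqref{eq:obj-cut-g}. Fix any $\bm{V}$ feasible for \ref{eq:Multi-SPCA} and let $S := \textup{supp}(\bm{V})$, so that $|S| \le k$. Since $g_{ji} = \bm{a}_j^\top \bm{v}_i$ and $\bm{A} = \sum_j \lambda_j \bm{a}_j \bm{a}_j^\top$, the left-hand side equals $\text{Tr}(\bm{V}^\top \bm{A} \bm{V})$, and because $\bm{V}$ vanishes outside $S$ this is $\text{Tr}(\bm{V}_S^\top \bm{A}_{S,S} \bm{V}_S)$. The constraint $\bm{V}^\top \bm{V} = \bm{I}^r$ forces $\bm{V}_S^\top \bm{V}_S = \bm{I}^r$, i.e.\ $\bm{V}_S$ has orthonormal columns, so I would extend it to an $|S| \times |S|$ orthogonal matrix $\bm{Q} = [\bm{V}_S \ \bm{W}]$. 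As $\bm{A}_{S,S}$ is a principal submatrix of the PSD matrix $\bm{A}$ it is itself PSD, hence $\text{Tr}(\bm{W}^\top \bm{A}_{S,S} \bm{W}) \ge 0$ and therefore $\text{Tr}(\bm{V}_S^\top \bm{A}_{S,S} \bm{V}_S) \le \text{Tr}(\bm{Q}^\top \bm{A}_{S,S} \bm{Q}) = \text{Tr}(\bm{A}_{S,S}) = \sum_{j \in S} \bm{A}_{jj}$. Since the diagonal of a PSD matrix is nonnegative and $|S| \le k$, this sum is at most the sum of the $k$ largest diagonal entries, which is exactly \eqref{eq:obj-cut-g}.

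For \eqref{eq:obj-cut-xi} I would start from its left-hand side and first peel off the piecewise-linear error. With the uniform splitting $\gamma_{ji}^\ell = (\ell/N)\theta_j$ we have, exactly as computed in the proof of the affine upper-bound guarantee, $0 \le \xi_{ji} - g_{ji}^2 \le \theta_j^2/(4N^2)$; because $\lambda_j - \lambda_{\mathrm{TH}} > 0$ for $j \in J^+$, substituting $\xi_{ji} \le g_{ji}^2 + \theta_j^2/(4N^2)$ produces precisely the additive term $\sum_{j \in J^+} \tfrac{r(\lambda_j - \lambda_{\mathrm{TH}})\theta_j^2}{4N^2}$ on the right (so the symbol $\phi$ in the statement should read $\lambda_{\mathrm{TH}}$). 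It then remains to show $\sum_{j \in J^+}(\lambda_j - \lambda_{\mathrm{TH}})\sum_i g_{ji}^2 - s + g\lambda_{\mathrm{TH}} \le \bm{A}_{j_1,j_1} + \cdots + \bm{A}_{j_k,j_k}$. Using the defining constraint \eqref{eq:s-var} to bound $-s$ from above, I would merge the $J^+$ and $J^-$ contributions into $\sum_{j=1}^d (\lambda_j - \lambda_{\mathrm{TH}})\sum_i g_{ji}^2 = \sum_{j} \lambda_j \sum_i g_{ji}^2 - \lambda_{\mathrm{TH}} \sum_{j}\sum_i g_{ji}^2$, and then apply \eqref{eq:obj-cut-g} to the first of these two sums.

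The step I expect to be the main obstacle — and the one to treat most carefully — is the handling of the variable $g$, since the term $+g\lambda_{\mathrm{TH}}$ has the ``wrong sign'' to be controlled by the companion constraint $g \ge \sum_{j}\sum_i g_{ji}^2$ on its own. The resolution uses that for a \emph{feasible} $\bm{V}$ the vectors $\{\bm{a}_j\}$ form an orthonormal eigenbasis, so $\sum_{j}\sum_i g_{ji}^2 = \|\bm{V}\|_F^2 = \text{Tr}(\bm{V}^\top \bm{V}) = r$; combined with the companion constraint $g \ge r$ and the model's range bound $g \le r$ (inherited from the threshold step, where $\sum_j\sum_i g_{ji}^2 \le r$ caps the third term), this pins $g = r$ and annihilates the residual term $\lambda_{\mathrm{TH}}\big(g - \sum_{j}\sum_i g_{ji}^2\big)$. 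This is exactly why the inequality is asserted as valid for \ref{eq:Multi-SPCA} rather than for the relaxation: the identity $\sum_j\sum_i g_{ji}^2 = r$ holds only on $\mathcal{F}$. Once this is in place, chaining the previous estimates with \eqref{eq:obj-cut-g} yields \eqref{eq:obj-cut-xi}, while \eqref{eq:obj-cut-g} itself is routine given the orthonormal-extension and PSD argument.
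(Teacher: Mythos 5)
Your proof is correct, and it is worth noting that the paper itself states this proposition without proof, offering only the one-line motivation that the trace of a symmetric matrix equals the sum of its eigenvalues. Your argument for \eqref{eq:obj-cut-g} is the same argument in different clothing: extending $\bm{V}_S$ to an orthogonal $\bm{Q}$ and discarding $\text{Tr}(\bm{W}^\top \bm{A}_{S,S}\bm{W}) \ge 0$ is exactly the statement that $\text{Tr}(\bm{V}_S^\top \bm{A}_{S,S}\bm{V}_S) \le \sum_i \lambda_i(\bm{A}_{S,S}) = \text{Tr}(\bm{A}_{S,S})$, which is the route the paper's remark gestures at; the final step (nonnegativity of the diagonal of a PSD matrix plus $|S|\le k$) is needed and you supply it. Your derivation of \eqref{eq:obj-cut-xi} — peel off the $\theta_j^2/(4N^2)$ interpolation error exactly as in the proof of the affine guarantee, absorb $-s$ via \eqref{eq:s-var}, recombine $J^+$ and $J^-$, and invoke \eqref{eq:obj-cut-g} — is the natural and correct completion, and you are right that $\phi$ must be read as $\lambda_{\mathrm{TH}}$.

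One small inaccuracy: there is no explicit constraint $g \le r$ in the model, so $g$ is not ``pinned'' to $r$ by the constraint system. But this does not matter for validity. A cut in an extended formulation is valid as long as every $\bm{V}\in\mathcal{F}$ admits \emph{some} lifting to the auxiliary variables satisfying all constraints and attaining the correct objective value; the canonical lifting takes $g = \sum_{j}\sum_i g_{ji}^2$, which equals $r$ on $\mathcal{F}$ by Parseval in the orthonormal eigenbasis $\{\bm{a}_j\}$ — exactly the identity you isolate — so the residual $\lambda_{\mathrm{TH}}\bigl(g - \sum_j\sum_i g_{ji}^2\bigr)$ vanishes for that lifting and the chain of inequalities closes. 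So the conclusion stands; only the phrase ``the model's range bound $g\le r$'' should be replaced by ``choose the lifting $g=r$.''
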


\subsection{Implemented version of \ref{eq:CIP}}
Thus the implemented version of \ref{eq:CIP} is
\begin{align*}
	\begin{array}{llll}
		\max & \sum_{j \in J^+} (\lambda_j - \lambda_{\mathrm{LB}}) \sum_{i = 1}^r \xi_{ji} - s + r \lambda_{\mathrm{LB}} \\
		\text{s.t} & \bm{V} \in \mathcal{CR}2 \\
		& (g, \xi, \eta) \in \text{PLA}([J^+] \times [r]) \\
		& \text{(\ref{eq:s-var}), (\ref{eq:sparse-g}), (\ref{eq:sparse-xi}), (\ref{eq:obj-cut-g}), (\ref{eq:obj-cut-xi})}
	\end{array} \tag{CIP-impl} \label{eq:CIP-impl}
\end{align*}

\subsection{Submatrix technique}
\begin{proposition}\label{prop:submat}
Let $X \in \mathbb{R}^{m \times n}$ and let $\theta$ be defined as
\begin{eqnarray*}
\theta:= 2\textup{max}_{\bm{V}^1 \in \mathbb{R}^{m \times r}, \bm{V}^2 \in \mathbb{R}^{n \times r}} ~ 2 \textup{Tr}\left( (\bm{V}^1)^{\top} \bm{X} \bm{V}^2 \right) \text{ s.t. } (\bm{V}^1)^{\top} \bm{V}^1 + (\bm{V}^2)^{\top} \bm{V}^2 = \bm{I}^r, 
\end{eqnarray*}
then $\theta \leq \sqrt{r} \|X\|_F$
\end{proposition}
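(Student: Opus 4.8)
The plan is to reduce the bilinear maximization defining $\theta$ to a symmetric eigenvalue problem, whose optimal value is a sum of top eigenvalues (Ky Fan), and then to recognize those eigenvalues as the singular values of $\bm{X}$ so that a single Cauchy--Schwarz step finishes the estimate.

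First I would stack the two blocks into one tall matrix $\bm{V} := \begin{pmatrix} \bm{V}^1 \\ \bm{V}^2 \end{pmatrix} \in \mathbb{R}^{(m+n)\times r}$ and observe that the constraint $(\bm{V}^1)^\top \bm{V}^1 + (\bm{V}^2)^\top \bm{V}^2 = \bm{I}^r$ is precisely $\bm{V}^\top \bm{V} = \bm{I}^r$, so that $\bm{V}$ ranges over all matrices with orthonormal columns in $\mathbb{R}^{m+n}$. Next I would introduce the symmetric dilation $\bm{B} := \begin{pmatrix} \bm{0} & \bm{X} \\ \bm{X}^\top & \bm{0} \end{pmatrix} \in \mathbb{R}^{(m+n)\times(m+n)}$ and verify by direct block multiplication that $\textup{Tr}(\bm{V}^\top \bm{B} \bm{V}) = \textup{Tr}((\bm{V}^1)^\top \bm{X} \bm{V}^2) + \textup{Tr}((\bm{V}^2)^\top \bm{X}^\top \bm{V}^1) = 2\,\textup{Tr}((\bm{V}^1)^\top \bm{X} \bm{V}^2)$. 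Hence the objective equals $\textup{Tr}(\bm{V}^\top \bm{B} \bm{V})$ exactly, and the maximization becomes $\max_{\bm{V}^\top \bm{V} = \bm{I}^r} \textup{Tr}(\bm{V}^\top \bm{B} \bm{V})$, the same trace form that appears in \ref{eq:PCA}.

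Then I would invoke Ky Fan's maximum principle (the very fact underlying the eigenvector solution of \ref{eq:PCA}), which gives that this maximum equals $\lambda_1(\bm{B}) + \cdots + \lambda_r(\bm{B})$, the sum of the $r$ largest eigenvalues of $\bm{B}$. The key structural observation is that the spectrum of the dilation $\bm{B}$ consists exactly of $\pm\sigma_i(\bm{X})$ for each singular value $\sigma_i(\bm{X})$, together with additional zeros; since singular values are nonnegative, the $r$ largest eigenvalues of $\bm{B}$ are $\sigma_1(\bm{X}) \ge \cdots \ge \sigma_r(\bm{X})$ (with the convention $\sigma_i = 0$ once $i$ exceeds $\min\{m,n\}$). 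A Cauchy--Schwarz inequality then yields $\sum_{i=1}^r \sigma_i(\bm{X}) \le \sqrt{r}\,\sqrt{\sum_{i=1}^r \sigma_i(\bm{X})^2} \le \sqrt{r}\,\|\bm{X}\|_F$, which is the claimed bound.

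The steps are all standard, so I do not expect a deep obstacle; the care lies in the bookkeeping. In particular I must track the factors of $2$ consistently so that $2\,\textup{Tr}((\bm{V}^1)^\top \bm{X}\bm{V}^2)$ matches $\textup{Tr}(\bm{V}^\top \bm{B}\bm{V})$ (the additional leading factor in the displayed definition of $\theta$ appears to be a typographical duplication), and I must argue cleanly that the top $r$ eigenvalues of the sign-symmetric spectrum $\{\pm\sigma_i\}\cup\{0\}$ are exactly the top $r$ singular values, which is where nonnegativity of the $\sigma_i$ is used. A minor point to check is the indexing when $r > \min\{m,n\}$: then some $\sigma_i$ vanish, but the Ky Fan identity and the Cauchy--Schwarz bound both remain valid verbatim, so the argument carries through without modification.
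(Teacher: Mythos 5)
Your proposal is correct and follows essentially the same route as the paper: both pass to the symmetric dilation $\begin{pmatrix} \bm{0} & \bm{X} \\ \bm{X}^{\top} & \bm{0} \end{pmatrix}$, bound the trace objective by the sum of its top $r$ eigenvalues, identify those eigenvalues as $\pm\sigma_i(\bm{X})$ via the SVD, and finish with Cauchy--Schwarz. Your remark that the extra leading factor of $2$ in the displayed definition of $\theta$ is a typographical duplication is also consistent with how the paper's own proof treats it.
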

\begin{proof}
\begin{align*}
	& ~  \max_{\bm{V}^1, \bm{V}^2} ~ 2 \textup{Tr}\left( (\bm{V}^1)^{\top} \bm{X} \bm{V}^2 \right) \text{ s.t. } (\bm{V}^1)^{\top} \bm{V}^1 + (\bm{V}^2)^{\top} \bm{V}^2 = \bm{I}^r, \\
	\Leftrightarrow ~ & ~ \max_{\bm{V}^1, \bm{V}^2} ~  \textup{Tr}\left( 
	\begin{pmatrix}
		\bm{V}^1)^{\top} & (\bm{V}^2)^{\top}  
	\end{pmatrix}
	\begin{pmatrix}
		0 & \bm{X} \\
		\bm{X}^{\top} & 0 \\
	\end{pmatrix}
	\begin{pmatrix}
		\bm{V}^1 \\ 
		\bm{V}^2 \\
	\end{pmatrix}
	\right) \text{ s.t. } (\bm{V}^1)^{\top} \bm{V}^1 + (\bm{V}^2)^{\top} \bm{V}^2 = \bm{I}^r, \\
	\Leftrightarrow ~ & ~ \max_{\bm{V}} ~  \text{Tr}\left( 
	\bm{V}^{\top}
	\begin{pmatrix}
		0 & \bm{X} \\
		\bm{X}^{\top} & 0 \\
	\end{pmatrix}
	\bm{V}
	\right) \text{ s.t. } \bm{V}^{\top} \bm{V} = \bm{I}^r.
\end{align*}
Note that the final maximization problem is equal to 
\begin{align*}
	& ~ \max_{\bm{V}} ~  \text{Tr}\left( 
	\bm{V}^{\top}
	\begin{pmatrix}
		0 & \bm{X} \\
		\bm{X}^{\top} & 0 \\
	\end{pmatrix}
	\bm{V}
	\right) \text{ s.t. } \bm{V}^{\top} \bm{V} = \bm{I}^r \\
	\leq & ~ \sum_{i = 1}^r \lambda_i \left(
	\begin{pmatrix}
		0 & \bm{X} \\
		\bm{X}^{\top} & 0 \\
	\end{pmatrix}
	\right), 
\end{align*}
Next we verify that the eigenvalues of 
$$ \left(\begin{array}{cc} 0 & X \\ X^{\top}& 0 \end{array}\right)$$ are $\pm$ singular values of $X$: Let $X = U\Sigma W^{\top}$. In particular, note that:
\begin{eqnarray*}
\begin{array}{rcccl}
\left(\begin{array}{cc} 0 & U\Sigma W^{\top} \\ W\Sigma U^{\top}& 0 \end{array}\right)\left[\begin{array}{c} u_i \\ w_i \end{array}\right] &=& \left[\begin{array}{c} U\Sigma e_i \\W\Sigma e_i\end{array}\right] &=& \sigma_i(X)\left[\begin{array}{c} u_i \\ w_i \end{array}\right]\\
\left(\begin{array}{cc} 0 & U\Sigma W^{\top} \\ W\Sigma U^{\top}& 0 \end{array}\right)\left[\begin{array}{c} u_i \\ -w_i \end{array}\right] &=& \left[\begin{array}{c} -U\Sigma e_i \\W\Sigma e_i\end{array}\right] &=& -\sigma_i(X)\left[\begin{array}{c} u_i \\ -w_i \end{array}\right].
\end{array}
\end{eqnarray*}
Therefore, we have 
\begin{align*}
	\sum_{i = 1}^r \lambda_i \left(
	\begin{pmatrix}
		0 & \bm{X} \\
		\bm{X}^{\top} & 0 \\
	\end{pmatrix}
	\right) = \sum_{i = 1}^r \sigma_i(\bm{X}) \leq \sqrt{r} \|\bm{X}\|_F. 
\end{align*}

\end{proof}



\end{document}